\newcommand\numberingtheoremsectionyesno{yes}
\newcommand\numberingequationsectionyesno{yes}
\newcommand\pagesizeextendednormal{extended}
\newcommand\reportudemathyesno{no}
\newcommand\reportudemathnumber{SM-UDE-821}
\newcommand\reportudemathyear{2019}
\newcommand\reportudematheingang{\mydate}
\newcommand{\mytitle}{\Large Low Frequency Asymptotics and Electro-Magneto-Statics 
for Time-Harmonic Maxwell's Equations in Exterior Weak Lipschitz
Domains with Mixed Boundary Conditions}
\newcommand{\mytitlerepude}{Low Frequency Asymptotics and Electro-Magneto-Statics\\ 
for Time-Harmonic Maxwell's Equations in Exterior Weak Lipschitz\\
Domains with Mixed Boundary Conditions}
\newcommand{\myshorttitle}{Low Frequency Asymptotics
for Time-Harmonic Maxwell's Equations in Exterior Domains}
\newcommand{\myauthorone}{Frank Osterbrink}
\newcommand{\myauthortwo}{Dirk Pauly}
\newcommand{\myauthors}{\myauthorone\quad\&\quad\myauthortwo}
\newcommand{\myaddressone}{Fakult\"at f\"ur Mathematik,
Universit\"at Duisburg-Essen, Campus Essen, Germany}
\newcommand{\myaddresstwo}{Fakult\"at f\"ur Mathematik,
Universit\"at Duisburg-Essen, Campus Essen, Germany}
\newcommand{\myemailone}{frank.osterbrink@uni-due.de}
\newcommand{\myemailtwo}{dirk.pauly@uni-due.de}
\newcommand{\mykeywords}{low frequency asymptotics, exterior boundary value problems, Maxwell's equations, electro-magneto-statics, Hodge-Helmholtz decompositions, radiating solutions, Dirichlet-Neumann fields, cohomology groups, polynomial decay of eigensolutions}
\newcommand{\mysubjclass}{35Q60, 78A25, 78A30}
\newcommand{\mydate}{\today}
\newcommand{\mythanks}{Corresponding author: \myauthortwo}
\DeclareFontFamily{U}{mathx}{\hyphenchar\font45}
\DeclareFontShape{U}{mathx}{m}{n}{
      <5> <6> <7> <8> <9> <10>
      <10.95> <12> <14.4> <17.28> <20.74> <24.88>
      mathx10
      }{}
\DeclareSymbolFont{mathx}{U}{mathx}{m}{n}
\DeclareMathAccent{\widecheck}{0}{mathx}{"71}
\DeclareMathAccent{\wideparen}{0}{mathx}{"75}
\newcommand{\preprintudemath}[5]{
\thispagestyle{empty}
\Large
\begin{center}SCHRIFTENREIHE DER FAKULT\"AT F\"UR MATHEMATIK\end{center}
\vspace*{5mm}
\begin{center}#1\end{center}
\vspace*{5mm}
\begin{center}by\end{center}
\begin{center}#2\end{center}
\vspace*{5mm}
\begin{center}#3\hspace{80mm}#4\end{center}
\newpage
\thispagestyle{empty}
\vspace*{210mm}
Received: #5
\newpage
\addtocounter{page}{-2}
\normalsize}
\numberwithin{equation}{section}}{}
\newcommand{\leqnomode}{\tagsleft@true}
\newcommand{\reqnomode}{\tagsleft@false}
\newcommand{\dsp}{\displaystyle}
\newcommand{\ovl}[1]{\overline{#1}}
\newcommand{\set}[2]{\{#1\,:\,#2\}}
\newcommand{\setb}[2]{\big\{#1\,:\,#2\big\}}
\newcommand{\setB}[2]{\Big\{#1\,:\,#2\Big\}}
\newcolumntype{L}[1]{>{\raggedright\arraybackslash}p{#1}}
\newcolumntype{C}[1]{>{\centering\arraybackslash}p{#1}}
\newcolumntype{R}[1]{>{\raggedleft\arraybackslash}p{#1}} 
\newtheorem{lem}{Lemma}[section]}
\newtheorem{lem}{Lemma}}
\newtheorem{defi}[lem]{Definition}
\newtheorem{theo}[lem]{Theorem}
\newtheorem{cor}[lem]{Corollary}
\newtheorem{rem}[lem]{Remark}
\newtheorem{pro}[lem]{Proposition}
\newtheorem{genass}[lem]{General Assumption}
\newcommand{\al}{\alpha}
\newcommand{\da}{\delta}
\newcommand{\eps}{\varepsilon}
\newcommand{\ga}{\gamma}
\newcommand{\ka}{\kappa}
\newcommand{\om}{\omega}
\newcommand{\Ga}{\Gamma}
\newcommand{\La}{\mathrm{\Lambda}}
\newcommand{\Laz}{\La_{0}}
\newcommand{\tLaz}{\widetilde{\La}_{0}}\newcommand{\Om}{\Omega}
\newcommand{\Omb}{\ovl{\Om}}
\newcommand{\calD}{{\mathcal D}}
\newcommand{\calE}{{\mathcal E}}
\newcommand{\calH}{{\mathcal H}}
\newcommand{\calL}{{\mathcal L}}
\newcommand{\calN}{{\mathcal N}}
\newcommand{\calO}{{\mathcal O}}
\newcommand{\calR}{{\mathcal R}}
\newcommand{\sfC}{{\mathsf C}}
\newcommand{\sfD}{{\mathsf D}}
\newcommand{\sfH}{{\mathsf H}}
\newcommand{\sfL}{{\mathsf L}}
\newcommand{\sfR}{{\mathsf R}}
\newcommand{\sfV}{{\mathsf V}}
\newcommand{\scrB}{\mathscr{B}}
\newcommand{\scrE}{\mathscr{E}}
\newcommand{\scrM}{\mathscr{M}}
\newcommand{\scrR}{\mathscr{R}}
\newcommand{\frakB}{\mathfrak{B}}
\newcommand{\aLR}{
	\font\eqfont=cmr10 scaled 1200
	\font\arrfont=cmsy10 scaled 1200
	\textfont0=\eqfont
	\textfont2=\arrfont
	\protect\Relbar\protect\joinrel\Rightarrow}
\renewcommand{\Longrightarrow}{\aLR}
\renewcommand{\to}{\rightarrow}
\newcommand{\To}{\longrightarrow}
\newcommand{\restr}[2]{\left.#1 \right|_{#2}}
\newcommand{\map}[3]{#1:#2\longrightarrow#3}
\newcommand{\maps}[5]{#1:#2\;\longrightarrow\;#3\;,\;\;#4\;\longmapsto\;#5}
\newcommand{\Map}[5]	
    {\begin{array}{ccccc}	
    #1 & : & #2 & \longrightarrow & #3 \\[5pt]
    {}   & {}  & #4 & \longmapsto & #5	
    \end{array}}
\DeclareMathOperator{\supp}{supp}
\DeclareMathOperator{\dist}{dist}
\renewcommand{\Im}{\mathrm{Im}\,}
\newcommand{\adj}[1]{\overset{}{#1}^{\hspace*{-0.05cm}\ast}}
\newcommand{\A}{\mathrm{A}}
\newcommand{\M}{\mathrm{M}}
\newcommand{\G}{\mathrm{G}}
\newcommand{\p}{\partial}
\DeclareMathOperator{\grad}{grad}
\DeclareMathOperator{\rot}{rot}
\DeclareMathOperator{\divergenz}{div}
\renewcommand{\div}{\divergenz}
\newcommand{\sm}{\setminus}
\newcommand{\rtil}{\tilde{r}}
\newcommand{\rhat}{\hat{r}}
\newcommand{\ttil}{\tilde{t}}
\newcommand{\Gat}{\Ga_{1}}
\newcommand{\Gan}{\Ga_{2}}
\newcommand{\B}{\mathrm{B}}
\newcommand{\C}{\mathbb{C}}
\newcommand{\K}{\mathrm{K}}
\newcommand{\N}{\mathbb{N}}
\newcommand{\reals}{\mathbb{R}}
\newcommand{\U}{\mathrm{U}}
\newcommand{\cU}{\widecheck{\mathrm{U}}}
\newcommand{\Sp}{\mathrm{S}}
\newcommand{\dod}{\mathcal{D}}
\newcommand{\rg}{\mathcal{R}}
\renewcommand{\ker}{\mathcal{N}}
\newcommand{\V}{\mathrm{V}}
\newcommand{\gk}[1]{\mathcal{N}_{\mathsf{gen}}(#1)}
\newcommand{\gs}{\sigma_{\mathsf{gen}}(\scrM)}	
\newcommand{\rthree}{\reals^{3}}
\newcommand{\rttt}{\reals^{3\times3}}
\newcommand{\Omrhat}{\Om_{\rhat}}
\newcommand{\Omda}{\Om_{\da}}
\newcommand{\BGatom}{\frakB_{1}(\Om)}
\newcommand{\BGanom}{\frakB_{2}(\Om)}
\newcommand{\cgen}[3]{\overset{#1}{\sfC}{}^{#2}_{#3}}
\newcommand{\Cgen}[2]{\mathring{\sfC}{}^{#1}_{#2}}
\newcommand{\co}{\cgen{}{1}{}}
\newcommand{\ci}{\cgen{}{\substack{\infty\\[-5pt]\phantom{\infty}}}{}}
\newcommand{\cirthree}{\ci(\rthree)}
\newcommand{\cic}{\Cgen{\substack{\infty\\[-5pt]\phantom{\infty}}}{}}
\newcommand{\cicg}{\cgen{}{\substack{\infty\\[-5pt]\phantom{\infty}}}{\Ga}}
\newcommand{\cict}{\cgen{}{\substack{\infty\\[-5pt]\phantom{\infty}}}{\Gat}}
\newcommand{\cicn}{\cgen{}{\substack{\infty\\[-5pt]\phantom{\infty}}}{\Gan}}
\newcommand{\cicgom}{\cicg(\Om)}
\newcommand{\cictom}{\cict(\Om)}
\newcommand{\cicnom}{\cicn(\Om)}
\newcommand{\lsymb}{\sfL}
\newcommand{\lgen}[3]{\overset{#1}{\lsymb}{}^{#2}_{#3}}
\newcommand{\ltloc}{\lgen{}{2}{\mathrm{loc}}}
\newcommand{\lo}{\lgen{}{1}{}}
\newcommand{\lt}{\lgen{}{2}{}}
\newcommand{\li}{\lsymb^{\infty}}
\newcommand{\ltlocom}{\ltloc(\Om)}
\newcommand{\ltom}{\lt(\Om)}
\newcommand{\ltlocomb}{\ltloc(\Omb)}
\newcommand{\ltmo}{\lgen{}{2}{-1}}
\newcommand{\lts}{\lgen{}{2}{s}}
\newcommand{\ltspo}{\lgen{}{2}{s+1}}
\newcommand{\ltt}{\lgen{}{2}{t}}
\newcommand{\lttpo}{\lgen{}{2}{t+1}}
\newcommand{\ltmoom}{\ltmo(\Om)}
\newcommand{\ltsom}{\lts(\Om)}
\newcommand{\lttom}{\ltt(\Om)}
\newcommand{\lttpoom}{\lttpo(\Om)}
\newcommand{\ltmomka}{\lgen{}{2}{-1-\ka}}
\newcommand{\ltttil}{\lgen{}{2}{\ttil}}
\newcommand{\lttpal}{\lgen{}{2}{t+|\al|}}
\newcommand{\ltsmka}{\lgen{}{2}{s-\ka}}
\newcommand{\ltbig}[1]{\lgen{}{2}{>#1}}
\newcommand{\ltsm}[1]{\lgen{}{2}{<#1}}
\newcommand{\ltLa}{\lgen{}{2}{\La}}
\newcommand{\ltga}{\lgen{}{2}{\ga}}
\newcommand{\lteps}{\lgen{}{2}{\eps}}
\newcommand{\ltmu}{\lgen{}{2}{\mu}}
\newcommand{\ltmoeps}{\lgen{}{2}{-1,\eps}}
\newcommand{\lttga}{\lgen{}{2}{t,\ga}}
\newcommand{\ltmomkaom}{\ltmomka(\Om)}
\newcommand{\ltttilom}{\ltttil(\Om)}
\newcommand{\lttpalom}{\lttpal(\Om)}
\newcommand{\ltbigom}[1]{\ltbig{#1}(\Om)}
\newcommand{\ltsmom}[1]{\ltsm{#1}(\Om)}
\newcommand{\ltLaom}{\ltLa(\Om)}
\newcommand{\ltgaom}{\ltga(\Om)}
\newcommand{\ltepsom}{\lteps(\Om)}
\newcommand{\ltmuom}{\ltmu(\Om)}
\newcommand{\hsymb}{\sfH}
\newcommand{\hgen}[3]{\overset{#1}{\hsymb}{}^{#2}_{#3}}
\newcommand{\ho}{\hgen{}{1}{}}
\newcommand{\homo}{\hgen{}{1}{-1}}
\newcommand{\homog}{\hgen{}{1}{-1,\Ga}}
\newcommand{\homot}{\hgen{}{1}{-1,\Gat}}
\newcommand{\homoom}{\homo(\Om)}
\newcommand{\homotom}{\homot(\Om)}
\newcommand{\hos}{\hgen{}{1}{s}}
\newcommand{\hot}{\hgen{}{1}{t}}
\newcommand{\hott}{\hgen{}{1}{t,\Gat}}
\newcommand{\hmt}{\hgen{}{m}{t}}
\newcommand{\hmtn}{\hgen{}{m}{t,\Gan}}
\newcommand{\hotom}{\hot(\Om)}
\newcommand{\hottom}{\hott(\Om)}
\newcommand{\hmtom}{\hmt(\Om)}
\newcommand{\hmtnom}{\hmtn(\Om)}
\newcommand{\homomka}{\hgen{}{1}{-1-\ka}}
\newcommand{\Hsymb}{\text{\sf\bfseries H}}
\newcommand{\Hgen}[3]{\overset{#1}{\Hsymb}{}^{#2}_{#3}}
\newcommand{\Hozt}{\Hgen{}{1}{\Gat}}
\newcommand{\Htwo}{\Hgen{}{2}{}}
\newcommand{\Hm}{\Hgen{}{m}{}}
\newcommand{\Hoztom}{\Hozt(\Om)}
\newcommand{\Hmom}{\Hm(\Om)}
\newcommand{\Hos}{\Hgen{}{1}{s}}
\newcommand{\Hot}{\Hgen{}{1}{t}}
\newcommand{\Hott}{\Hgen{}{1}{t,\Gat}}
\newcommand{\Hmt}{\Hgen{}{m}{t}}
\newcommand{\Hotom}{\Hot(\Om)}
\newcommand{\Hottom}{\Hott(\Om)}
\newcommand{\Hmtom}{\Hmt(\Om)}
\newcommand{\rsymb}{\sfR}
\newcommand{\rgen}[3]{\overset{#1}{\rsymb}{}^{#2}_{#3}}
\newcommand{\rloct}{\rgen{}{}{\mathrm{loc},\Gat}}
\newcommand{\rlocn}{\rgen{}{}{\mathrm{loc},\Gan}}
\newcommand{\rloctom}{\rloct(\Om)}
\newcommand{\rlocnom}{\rlocn(\Om)}
\newcommand{\zr}{{}_{0}\rgen{}{}{}}
\newcommand{\zrzt}{{}_{0}\rgen{}{}{\Gat}}
\newcommand{\zrzn}{{}_{0}\rgen{}{}{\Gan}}
\newcommand{\zrztom}{\zrzt(\Om)}
\newcommand{\zrznom}{\zrzn(\Om)}
\newcommand{\rmo}{\rgen{}{}{-1}}
\newcommand{\rmot}{\rgen{}{}{-1,\Gat}}
\newcommand{\rmon}{\rgen{}{}{-1,\Gan}}
\newcommand{\rmoom}{\rmo(\Om)}
\newcommand{\rmotom}{\rmot(\Om)}
\newcommand{\rmonom}{\rmon(\Om)}
\newcommand{\zrmot}{{}_{0}\rgen{}{}{-1,\Gat}}
\newcommand{\zrmotom}{\zrmot(\Om)}
\newcommand{\rsmon}{\rgen{}{}{s-1,\Gan}}
\newcommand{\rsmonom}{\rsmon(\Om)}
\newcommand{\zrst}{{}_{0}\rgen{}{}{s,\Gat}}
\newcommand{\zrstom}{\zrst(\Om)}
\newcommand{\rt}{\rgen{}{}{t}}
\newcommand{\rtt}{\rgen{}{}{t,\Gat}}
\newcommand{\rtom}{\rt(\Om)}
\newcommand{\rttom}{\rtt(\Om)}
\newcommand{\zrt}{{}_{0}\rgen{}{}{t}}
\newcommand{\zrtt}{{}_{0}\rgen{}{}{t,\Gat}}
\newcommand{\zrtom}{\zrt(\Om)}
\newcommand{\zrttom}{\zrtt(\Om)}
\newcommand{\rbigt}[1]{\rgen{}{}{>#1,\Gat}}
\newcommand{\rbign}[1]{\rgen{}{}{>#1,\Gan}}
\newcommand{\Rsymb}{\text{\sf\bfseries R}}
\newcommand{\Rgen}[3]{\overset{#1}{\Rsymb}{}^{#2}_{#3}}
\newcommand{\R}{\Rgen{}{}{}}
\newcommand{\Rzt}{\Rgen{}{}{\Gat}}
\newcommand{\Rzn}{\Rgen{}{}{\Gan}}
\newcommand{\Rom}{\R(\Om)}
\newcommand{\Rztom}{\Rzt(\Om)}
\newcommand{\Rznom}{\Rzn(\Om)}
\newcommand{\Rmot}{\Rgen{}{}{-1,\Gat}}
\newcommand{\Rmon}{\Rgen{}{}{-1,\Gan}}
\newcommand{\Rmotom}{\Rmot(\Om)}
\newcommand{\Rmonom}{\Rmon(\Om)}
\newcommand{\Rst}{\Rgen{}{}{s,\Gat}}
\newcommand{\Rsn}{\Rgen{}{}{s,\Gan}}
\newcommand{\Rstom}{\Rst(\Om)}
\newcommand{\Rsnom}{\Rsn(\Om)}
\newcommand{\Rt}{\Rgen{}{}{t}}
\newcommand{\Rtt}{\Rgen{}{}{t,\Gat}}
\newcommand{\Rtn}{\Rgen{}{}{t,\Gan}}
\newcommand{\Rtom}{\Rt(\Om)}
\newcommand{\Rttom}{\Rtt(\Om)}
\newcommand{\Rtnom}{\Rtn(\Om)}
\newcommand{\Rttilt}{\Rgen{}{}{\ttil,\Gat}}
\newcommand{\Rttiln}{\Rgen{}{}{\ttil,\Gan}}
\newcommand{\Rsm}[1]{\Rgen{}{}{<#1}}
\newcommand{\Rsmt}[1]{\Rgen{}{}{<#1,\Gat}}
\newcommand{\Rsmn}[1]{\Rgen{}{}{<#1,\Gan}}
\newcommand{\Rttiltom}{\Rttilt(\Om)}
\newcommand{\Rttilnom}{\Rttiln(\Om)}
\newcommand{\Rsmtom}[1]{\Rsmt{#1}(\Om)}
\newcommand{\Rsmnom}[1]{\Rsmn{#1}(\Om)}
\newcommand{\dsymb}{\sfD}
\newcommand{\dgen}[3]{\overset{#1}{\dsymb}{}^{#2}_{#3}}
\newcommand{\zd}{{}_{0}\dgen{}{}{}}
\newcommand{\zdzt}{{}_{0}\dgen{}{}{\Gat}}
\newcommand{\zdzn}{{}_{0}\dgen{}{}{\Gan}}
\newcommand{\zdztom}{\zdzt(\Om)}
\newcommand{\zdznom}{\zdzn(\Om)}
\newcommand{\dmo}{\dgen{}{}{-1}}
\newcommand{\dmot}{\dgen{}{}{-1,\Gat}}
\newcommand{\dmon}{\dgen{}{}{-1,\Gan}}
\newcommand{\dmoom}{\dmo(\Om)}
\newcommand{\dmotom}{\dmot(\Om)}
\newcommand{\dmonom}{\dmon(\Om)}
\newcommand{\zdmon}{{}_{0}\dgen{}{}{-1,\Gan}}
\newcommand{\zdmonom}{\zdmon(\Om)}
\newcommand{\zdsn}{{}_{0}\dgen{}{}{s,\Gan}}
\newcommand{\zdsnom}{\zdsn(\Om)}
\newcommand{\dt}{\dgen{}{}{t}}
\newcommand{\dtt}{\dgen{}{}{t,\Gat}}
\newcommand{\dtom}{\dt(\Om)}
\newcommand{\dttom}{\dtt(\Om)}
\newcommand{\zdt}{{}_{0}\dgen{}{}{t}}
\newcommand{\zdtt}{{}_{0}\dgen{}{}{t,\Gat}}
\newcommand{\zdtn}{{}_{0}\dgen{}{}{t,\Gan}}
\newcommand{\zdtom}{\zdt(\Om)}
\newcommand{\zdttom}{\zdtt(\Om)}
\newcommand{\zdtnom}{\zdtn(\Om)}
\newcommand{\dbigt}[1]{\dgen{}{}{>#1,\Gat}}
\newcommand{\dbign}[1]{\dgen{}{}{>#1,\Gan}}
\newcommand{\zdttilt}{{}_{0}\dgen{}{}{\ttil,\Gat}}
\newcommand{\zdttiln}{{}_{0}\dgen{}{}{\ttil,\Gan}}
\newcommand{\Dsymb}{\text{\sf\bfseries D}}
\newcommand{\Dgen}[3]{\overset{#1}{\Dsymb}{}^{#2}_{#3}}
\newcommand{\D}{\Dgen{}{}{}}
\newcommand{\Dzt}{\Dgen{}{}{\Gat}}
\newcommand{\Dzn}{\Dgen{}{}{\Gan}}
\newcommand{\Dom}{\D(\Om)}
\newcommand{\Dztom}{\Dzt(\Om)}
\newcommand{\Dznom}{\Dzn(\Om)}
\newcommand{\Dmot}{\Dgen{}{}{-1,\Gat}}
\newcommand{\Dmon}{\Dgen{}{}{-1,\Gan}}
\newcommand{\Dmotom}{\Dmot(\Om)}
\newcommand{\Dmonom}{\Dmon(\Om)}
\newcommand{\Ds}{\Dgen{}{}{s}}
\newcommand{\Dst}{\Dgen{}{}{s,\Gat}}
\newcommand{\Dsn}{\Dgen{}{}{s,\Gan}}
\newcommand{\Dstom}{\Dst(\Om)}
\newcommand{\Dsnom}{\Dsn(\Om)}
\newcommand{\Dt}{\Dgen{}{}{t}}
\newcommand{\Dtt}{\Dgen{}{}{t,\Gat}}
\newcommand{\Dtn}{\Dgen{}{}{t,\Gan}}
\newcommand{\Dtom}{\Dt(\Om)}
\newcommand{\Dttom}{\Dtt(\Om)}
\newcommand{\Dtnom}{\Dtn(\Om)}
\newcommand{\Dttiltom}{\Dgen{}{}{\ttil,\Gat}(\Om)}
\newcommand{\Dttilnom}{\Dgen{}{}{\ttil,\Gan}(\Om)}
\newcommand{\harmsymb}{\mathscr{H}}
\newcommand{\harmgen}[4]{{}_{#1}{\harmsymb}{}_{#2,#3,#4}}
\newcommand{\Harmgen}[3]{{}_{#1}{\harmsymb}{}_{#2,#3}}
\newcommand{\vHarmv}[3]{\Harmgen{#1}{#2}{#3}}
\newcommand{\vHarmtn}[1]{\Harmgen{#1}{\Gat}{\Gan}}
\newcommand{\vHarmnt}[1]{\Harmgen{#1}{\Gan}{\Gat}}
\newcommand{\gharms}[2]{\harmgen{\ga}{s}{#1}{#2}}
\newcommand{\gharmstn}{\gharms{\Gat}{\Gan}}
\newcommand{\vharmmotn}[1]{\harmgen{#1}{-1}{\Gat}{\Gan}}
\newcommand{\vharmmont}[1]{\harmgen{#1}{-1}{\Gan}{\Gat}}
\newcommand{\gharmstnom}{\gharmstn(\Om)}
\newcommand{\norm}[1]{\left\|\,#1\,\right\|}
\newcommand{\normltom}[1]{\norm{#1}_{\ltom}}
\newcommand{\normltmoom}[1]{\norm{#1}_{\ltmoom}}
\newcommand{\normltsom}[1]{\norm{#1}_{\ltsom}}
\newcommand{\normlttom}[1]{\norm{#1}_{\lttom}}
\newcommand{\scp}[2]{\langle\,#1\,,#2\,\rangle}
\newcommand{\scpltom}[2]{\scp{#1}{#2}_{\ltom}}
\newcommand{\scpltLaom}[2]{\scp{#1}{#2}_{\ltLaom}}
\newcommand{\pthreevec}[3]{\begin{pmatrix}#1\\#2\\#3\end{pmatrix}}
\newcommand{\ptwomat}[4]{\begin{pmatrix}#1&#2\\#3&#4\end{pmatrix}}
\def\tikz@shading{ring}\tikz@addmode{\tikz@mode@shadetrue}}
\title[\sc\myshorttitle]{\Large\sf\mytitle}
\author{\myauthorone}
\author{\myauthortwo}
\address{\myaddressone}
\email[\myauthorone]{\myemailone}
\address{\myaddresstwo}
\email[\myauthortwo]{\myemailtwo}
\keywords{\mykeywords}
\subjclass{\mysubjclass}
\date{\mydate}
\thanks{\mythanks}
\begin{document}
%
%
\ifthenelse{\equal{\reportudemathyesno}{yes}}
{\preprintudemath{\mytitlerepude}{\myauthors}{\reportudemathnumber}
{\reportudemathyear}{\reportudematheingang}}{}
%
%
\begin{abstract}
We prove that the time-harmonic solutions to Maxwell's equations in a 3D exterior 
domain converge to a certain static solution as the frequency tends to zero. We 
work in weighted Sobolev spaces and construct new compactly supported replacements 
for Dirichlet-Neumann fields. Moreover, we even show convergence in operator norm. 
\end{abstract}
\maketitle
\tableofcontents
%
%
\leqnomode
%
%
\section{Introduction}\label{sec:introduction}
%
%
Applying a time-harmonic ansatz (or Fourier-transformation with respect to  
time) to the classical time-dependent Maxwell equations 
in some domain $\Om\subset\rthree$, we are led to consider the 
\emph{time-harmonic Maxwell system}
\begin{align}\label{equ:int_max-sys}
	\rot E+i\om B=G\,,
	\qquad\qquad
	-\rot H+i\om D=-F\,,
	\qquad\qquad\text{in }\Om
\end{align}
with frequency $\om\in\C$. Here, $E$ and $H$ denote the electric and magnetic 
field, $D=\eps E$ and $B=\mu H$ represent the displacement current and 
magnetic induction, respectively, and $F$,\,$G$ are known source terms. The 
matrix valued functions $\eps$ and $\mu$ describe the permittivity and permeability 
of the medium filling $\Om$ and are assumed to be time-independent. In the 
following we are specifically interested in the case of an exterior weak Lipschitz 
domain $\Om\subset\rthree$ (\,i.e., a connected open subset with compact 
complement\,) with boundary $\Ga:=\partial\hspace*{0.01cm}\Om$ (\,Lipschitz 
submanifold\,) decomposed into two relatively open subsets $\Ga_1$ and 
$\Ga_2:=\Ga\sm\ovl{\Ga}_{1}$ being itself Lipschitz submanifolds of $\Ga$. We 
impose mixed homogeneous boundary conditions, which in classical terms can be 
written as 
\begin{align}\label{equ:int_bd-cond}
	n\times E=0\text{ on }\Ga_{1},
	\qquad\qquad
	n\times H=0\text{ on }\Ga_{2},
    \qquad\qquad
    (\,n:\text{ outward unit normal}\,),
\end{align}
and, in order to separate outgoing from incoming waves, we require the so called 
Silver-M\"uller radiation condition 
(with $\xi(x):=x/r(x)$ and $r(x):=|x|$ for $x\in\rthree$)
\begin{align}\label{equ:int_rad-cond}
	\xi\times H+E\,,\;\xi\times E-H=\mathrm{o}(r^{-1})
	\qquad\text{for}\qquad 
	r\To\infty\,.
\end{align}
\indent
First existence results concerning boundary value problems for 
the time-harmonic Maxwell system in exterior domains have been given by M\"uller 
\cite{muller_behavior_1954,muller_randwertprobleme_1952} in domains with smooth 
boundaries and homogeneous, isotropic media, i.e. $\eps=\mu=\mathbbm{1}$. In 
\cite{leis_zur_1968} Leis used the limiting absorption principle to obtain 
existence and uniqueness for media, which are possibly inhomogeneous and 
anisotropic within a bounded subset of $\Om$. Nevertheless, Leis still needed 
strong assumptions on the boundary regularity. In the bounded domain case, even 
for general inhomogeneous and anisotropic media (\,cf. Leis 
\cite{leis_aussenraumaufgaben_1974}\,), it is sufficient that $\Om$ allows for a 
certain selection theorem, later called \emph{Weck's selection theorem} or 
\emph{Maxwell compactness property}, which holds for a class of boundaries much 
larger than those accessible by the detour over $\ho$ (\,cf. Weck 
\cite{weck_maxwells_1974}, Weber \cite{weber_local_1980}, Picard 
\cite{picard_elementary_1984}, Costabel \cite{costabel_remark_1990}, 
Witsch \cite{witsch_remark_1993}, and Picard, Weck, and Witsch 
\cite{picard_time-harmonic_2001}\,). 
The most recent result for a solution theory in the exterior domain case is due 
to the second author \cite{pauly_low_2006, pauly_polynomial_2012} (\,see also 
\cite{pauly_niederfrequenzasymptotik_2003}\,) and in its structure comparable to 
the results of \cite{picard_time-harmonic_2001}. While all these results handle 
the case of full boundary conditions, in \cite{osterbrink_time-harmonic_2019} 
the authors treated for the first time mixed boundary conditions. Using the 
framework of polynomially weighted Sobolev spaces from 
\cite{picard_time-harmonic_2001}, we have been able to show that the time-harmonic 
boundary value problem \eqref{equ:int_max-sys}, \eqref{equ:int_bd-cond}, and 
\eqref{equ:int_rad-cond} admits unique solutions. In particular, by means of 
Eidus limiting absorption principle \cite{eidus_principle_1965} (\,see also 
\cite{eidus_spectra_1985, eidus_limiting_1986}\,) for the 
physically interesting case of real frequencies $\om$ a Fredholm alternative type 
result holds true. Similar to the bounded domain case, the crucial tool for 
existence is again a compact embedding result, now being a local version of Weck's 
selection theorem.\\
\indent
In this paper we investigate the low frequency behaviour of the corresponding 
time-harmonic solution operator. To this end we first have to provide a solution 
theory for the static boundary value problem, i.e., $\om=0$, which reads
\begin{alignat}{5}\label{equ:int_static-system}
    \rot E&=G\;\,&&\text{ in }\;\Om\,,\hspace*{2cm}& &
      \hspace*{2.5cm}& \rot H&=F\;\,&&\text{ in }\;\Om\,,\notag\\
    \div\eps E&=f\;\,&&\text{ in }\;\Om\,,\hspace*{2.5cm}& &
      \hspace*{2.5cm}& \div\mu H&=g\;\,&&\text{ in }\;\Om\,,\notag\\[-8pt]
      &&&&&\\[-8pt]
    n\times E&=0\;\,&&\text{ on }\;\Gat\,,\hspace*{2.5cm}& &
      \hspace*{2.5cm}& n\times H&=0\;\,&&\text{ on }\;\Gan\,,\notag\\
    n\cdot\eps E&=0\;\,&&\text{ on }\;\Gan\,,\hspace*{2.5cm}& &
      \hspace*{2.5cm}& n\cdot\mu H&=0\;\,&&\text{ on }\;\Gat\,.\notag
\end{alignat} 
\indent
There are two major challenges:
\begin{itemize}
	\item Problems in exterior domains require to work in weighted Sobolev spaces. 
	\item The systems \eqref{equ:int_static-system} have non trivial kernels, 
		  forcing us to work with orthogonality constraints on solutions in 
		  weighted Sobolev spaces to achieve uniqueness.	This specific difficulty 
		  is overcome by a construction of special compactly supported fields and 
		  certain functionals, see Theorem \ref{thm:stp_dir-neu-char}.  
\end{itemize}

In the case of full homogeneous boundary conditions and homogeneous, isotropic 
media Kress \cite{kress_potentialtheoretische_1972} (\,using integral equation 
methods\,) and Picard \cite{picard_randwertaufgaben_1981} (\,using Hilbert space 
methods\,) established solution theories in the generalized setting of alternating 
differential forms on Riemannian manifolds of arbitrary dimensions (\;see also 
\cite{picard_decomposition_1990} for nonlinear materials\,). For the classical 
threedimensional case of electro-magneto-statics with full homogeneous boundary 
conditions, we refer to Picard \cite{picard_boundary_1982} 
(\,see also \cite{milani_decomposition_1988}\,) as well as the references therein.
Following the Hilbert space approach, in Section \ref{sec:e-m-static} we will 
present Helmholtz type decompositions in weighted Sobolev spaces which then 
together with Weck's local selection theorem will provide a powerful setting for 
solving system \eqref{equ:int_static-system}.\\
\indent
In Section 4 we shortly present the time-harmonic solution theory summarizing the 
results obtained in \cite{osterbrink_time-harmonic_2019}. This results follow by 
the same methods as in \cite{pauly_low_2006,pauly_polynomial_2012} (\,see also 
Picard, Weck, and Witsch \cite{picard_time-harmonic_2001}, Weck and Witsch 
\cite{weck_complete_1992,weck_generalized1_1997,weck_generalized2_1997}\,). For 
nonreal frequencies the solution is obtained by standard Hilbert space methods as 
$\om$ belongs to the resolvent set of the \emph{Maxwell operator}
\begin{align*}
	\maps{\scrM}
		{\Rztom\times\Rznom\subset\ltLaom}{\ltLaom}
		{(E,H)}{i\La^{-1}\M}
		\,,\quad\quad
		\ltLaom:=\ltepsom\times\ltmuom\,, 
\end{align*}
where
\begin{align*}
	\La:=\ptwomat{\eps}{0}{0}{\mu}\,,
	\qquad\qquad
	\M:=\ptwomat{0}{-\rot}{\rot}{0}\,,
	\qquad\qquad
	\ltga(\Om):=\big(\,\ltom,\scp{\,\ga\,\cdot\,}{\,\cdot\,}_{\ltom}
	\,\big)\,.
\end{align*}
The case of real frequencies $\om\neq 0$ is much more challenging, since here we 
want to solve in the continuous spectrum of the Maxwell operator. Nevertheless, 
restricting to data $(F,G)\in\ltbigom{\frac{1}{2}}\times\ltbigom{\frac{1}{2}}$, 
we are able to obtain radiating solutions 
$(E,H)\in\ltsmom{-\frac{1}{2}}\times\ltsmom{-\frac{1}{2}}$ by means of Eidus' 
limiting absorption principle \cite{eidus_principle_1965,eidus_spectra_1985}, i.e., 
as limit of solutions corresponding to frequencies $\om\in\C_{+}\sm\reals$. In 
other words, the resolvent $(\scrM-\om)^{-1}$ and hence also 
$\calL_{\La,\om}=i(\,\scrM-\om\,)^{-1}\La^{-1}$ may be extended continuously to the 
real axis (\,cf. \cite{leis_initial_2013}\,). An a-priori-estimate and the 
polynomial decay of eigenfunctions needed in the limit process are obtained by 
transferring well known results for the Helmholtz equation in the whole space using 
a suitable decomposition of the fields $E$ and $H$ and perturbation arguments. 
This will be sufficient to show that a generalized Fredholm alternative holds, see
Theorem \ref{thm:thp_fredh-alt}. We have to admit finite dimensional eigenspaces 
for certain eigenvalues $\om\neq 0$, which can not accumulate in $\reals\sm\{0\}$. 
Next by proving an estimate for the solutions of the homogeneous and isotropic 
whole space problem together with an perturbation argument, we show that 
these possible eigenvalues do not accumulate even at $\om=0$. Therefore, for 
small $\om\neq 0$ the time-harmonic solution operator $\calL_{\La,\om}$ is well 
defined on $\ltbigom{\frac{1}{2}}\times\ltbigom{\frac{1}{2}}$ and a 
low frequency analysis is reasonable.\\
\indent
Finally, in Section \ref{sec:low-frequency-asymptotics} we investigate the 
low frequency behavior of time-harmonic solutions, in particular the 
question under which conditions radiating solutions converge to a static 
solution of system \eqref{equ:int_static-system}. In the case of a bounded 
domain the low frequency asymptotics is simply given by a Neumann series of the 
static solution operator $\calL_{0}$, which directly follows by applying 
$\calL_{0}$ to the time-harmonic system \eqref{equ:int_max-sys}. More precisely, 
in the case that $\Om$ is a bounded Lipschitz domain, by Weck's selection 
theorem the range $\calR(\scrM)$ of the Maxwell operator is closed and the reduced 
Maxwell operator
\begin{align*}
	\maps{\scrM_{\mathrm{red}}}{\calD(\scrM)\cap\calR(\scrM)\subset\calR(\scrM)}
	{\calR(\scrM)}{(E,H)}{(-i\eps^{-1}\rot H,i\mu^{-1}\rot E)}
\end{align*}
has a continuous inverse 
$\map{\calL_{0}}{\calR(\scrM)}{\calD(\scrM)\cap\calR(\scrM)}$, 
which interpreted as operator into $\calR(\scrM)$ is even compact. Moreover, 
arbitrary powers $\calL_{0}^{j}$ of $\calL_{0}$ are well defined. 
Hence, for small $|\om|>0$ the time-harmonic solution operator 
$\map{\calL_{\om}}{\ltLaom}{\calD(\scrM)}$ is well defined 
(\,Fredholm alternative\,) and is given by the Neumann series
\begin{align}\label{equ:int-lfa}
	\calL_{\om}=-\om^{-1}\pi_{\calN(\scrM)}
	+\sum_{j=0}^{\infty}\om^{j}\calL_{0}^{j+1}\pi_{\calR(\scrM)}	\,.
\end{align}
Here, $\pi_{\calN(\scrM)}$ and $\pi_{\calR(\scrM)}$ are the projections onto the 
kernel and the range of $\scrM$, respectively.\\
\indent
In the exterior domain case this simple low frequency asymptotics does not hold. It 
is even not well defined in an obvious way, since now the static solution operator 
$\calL_{0}$ maps data from a polynomially weighted Sobolev space to solutions 
belonging to a less weighted Sobolev space 
(\,cf. Theorem \ref{thm:stp_sol-thm-es} resp. Theorem \ref{thm:stp_sol-thm-ms}\,).
However, using an estimate for the solutions of the homogeneous, isotropic whole 
space problem together with a perturbation argument we can prove the convergence of 
the time-harmonic solutions $\calL_{\om}(F,G)$ to a specific static solution 
$\calL_{0}(F,G)$ on a certain subspace, i.e., 
\begin{align*}
	\calL_{\om}\To\calL_{0}\,.
\end{align*}
\indent
A proper and corrected version of the low frequency asymptotics \eqref{equ:int-lfa} 
for the case of an exterior domain will be addressed in a forthcoming publication 
(\,see \cite{pauly_complete_2008, pauly_hodgehelmholtz_2008, 
pauly_generalized_2009, pauly_low_2006, pauly_niederfrequenzasymptotik_2003} for 
the case of full boundary conditions\,).
%
%
\section{Preliminaries}\label{sec:prel}
%
%
In the following, $\Om\subset\rthree$ is an exterior weak Lipschitz 
domain, see \cite[Definition 2.3]{bauer_maxwell_2016}, with boundary 
$\Ga:=\partial\Om$ decomposed into two relatively open weak Lipschitz subdomains 
$\Gat$ and $\Gan:=\Ga\sm\ovl{\Ga}_{1}$, see 
\cite[Definition 2.5]{bauer_maxwell_2016}. For $x\in\rthree$ with $x\neq 0$ let 
$r(x):=|\,x\,|$ and $\xi(x):=x/|\,x\,|$ \big(\;$|\,\cdot\,|$\,: Euclidean norm in 
$\rthree$\,\big). Moreover, we fix $\rhat>0$ such that 
$\rthree\sm\Om\Subset\U_{\rhat}$ (compactly included) and define 
\begin{align*}
	\Omda:=\Om\cap \U_{\da}\,,
	\qquad\qquad
	\Ga_{i,\da}:=\Ga_{i}\cup\Sp_{\da}\,,
	\qquad\qquad
	\cU_{\da}:=\rthree\sm \ovl{\U}_{\da}\,,
	\qquad\qquad
	\G_{\rhat,\da}:=\cU(\rhat)\cap\U(\da),
	\qquad\;\;(\,\da\geq\rhat\,)\,,
\end{align*}
where $\U_{\da}$ and $\Sp_{\da}$ denote the open ball resp.\;sphere of radius $\da$ 
centered at the origin. We also pick some 
\begin{align*}
	\tilde{\eta}\in\ci(\reals)
	\quad\qquad\text{with}\quad\qquad
	0\leq\tilde{\eta}\leq 1\,,
	\quad
	\supp\tilde{\eta}\subset(1,\infty)\,,
	\quad
	\restr{\tilde{\eta}}{[2,\infty)}	=1\,,
\end{align*}
and define for $\da\geq\rhat$ functions $\eta_{\da}\in\ci(\rthree)$ by 
\[\eta_{\da}(x):=\tilde{\eta}(r(x)/\da).\] These functions satisfy
$\supp\eta_{\da}\subset\cU_{\da}$ as well as $\eta_{\da}=1$ on $\cU_{2\da}$ and 
will later be used for particular cut-off procedures. The usual Lebesgue and 
Sobolev spaces will be denoted by $\ltom$, $\Hmom$ and
\begin{align}\label{equ:not_spaces}
	\Rom:=\setb{E\in\ltom}{\rot E\in\ltom}\,,
	\qquad\quad
	\Dom:=\setb{E\in\ltom}{\div E\in\ltom}\,,
\end{align}
where we prefer to write $\rot$ instead of $\operatorname{curl}$. However, for our 
purposes this spaces are not rich enough, as even for square-integrable right hand 
sides the system \eqref{equ:int_max-sys}, \eqref{equ:int_bd-cond} does not admit 
square-integrable solutions (\,cf. \cite{picard_randwertaufgaben_1981}, 
\cite{osterbrink_time-harmonic_2019}\,). 
Hence we have to generalize the solution concept and work in polynomially weighted 
Sobolev spaces. For $\rho:=(1+r^2)^{1/2}$, $m\in\N$, and $t\in\reals$ we introduce
\begin{align*}
	\lttom :=\setb{u\in\ltlocom}{\rho^{t}u\in\ltom}\,,
\end{align*}
as well as
\begin{align*}
  \Hmtom &:=\setb{u\in\lttom}{\p^{\alpha}\hspace*{-0.2mm}u\in\lttom
  			\text{ for all }|\alpha|\leq m}\,,\\
  \hmtom &:=\setb{u\in\lttom}{\p^{\alpha}\hspace*{-0.2mm}u\in\lttpalom
   			\text{ for all }|\alpha|\leq m}\,,\\[-18pt]
\end{align*}
\begin{alignat*}{2}
	\Rtom &:=\setb{E\in\lttom}{\rot E\in\lttom}\,,\qquad\quad 
		  &\rtom &:=\setb{E\in\lttom}{\rot E\in\lttpoom}\,,\\
	\Dtom &:=\setb{H\in\lttom}{\div H\in\lttom}\,,\qquad\quad 
		  &\dtom &:=\setb{H\in\lttom}{\div H\in\lttpoom}\,.
\end{alignat*} 
We do not distinguish between vector fields resp.\;functions and (\,in accordance
with \eqref{equ:not_spaces}\,) we skip the weight if $t=0$, i.e., 
\begin{align*}
	\Hgen{}{1}{}(\Om)=\Hgen{}{1}{0}(\Om)\,,
	\qquad
	\rgen{}{}{}(\Om)=\rgen{}{}{\mathsf{0}}(\Om)\,,
	\qquad 
	\Dgen{}{}{}(\Om)=\Dgen{}{}{\mathsf{0}}(\Om)\,,
	\qquad\ldots\,.
\end{align*}
\noindent
If $\Gat\neq\emptyset$, homogeneous scalar, tangential or normal traces 
are encoded in 
\begin{alignat*}{3}
    \Hoztom &:=\ovl{\dsp\cictom}^{\,\norm{\cdot}_{\Hgen{}{1}{}(\Om)}}\,,\quad &
    \Rztom &:=\ovl{\dsp\cictom}^{\,\norm{\cdot}_{\Rgen{}{}{}(\Om)}}\,, \quad &
    \Dztom &:=\ovl{\dsp \cictom}^{\,\norm{\cdot}_{\Dgen{}{}{}(\Om)}}\,,\\
\intertext{as well as}
    \Hottom &:=\ovl{\dsp\cictom}^{\,\norm{\cdot}_{\Hgen{}{1}{t}(\Om)}}\,,\quad &
    \Rttom &:=\ovl{\dsp\cictom}^{\,\norm{\cdot}_{\Rgen{}{}{t}(\Om)}}\,, \quad &
    \Dttom &:=\ovl{\dsp \cictom}^{\,\norm{\cdot}_{\Dgen{}{}{t}(\Om)}}\,,\notag\\[-6pt]
    &&&&&\\[-6pt]
    \hottom &:=\ovl{\dsp\cictom}^{\,\norm{\cdot}_{\hgen{}{1}{t}(\Om)}}\,,\quad &
    \rttom &:=\ovl{\dsp\cictom}^{\,\norm{\cdot}_{\rgen{}{}{t}(\Om)}}\,, \quad &
    \dttom &:=\ovl{\dsp \cictom}^{\,\norm{\cdot}_{\dgen{}{}{t}(\Om)}}\,.\notag
\end{alignat*}
where the set of test fields (\,resp.\;test functions\,) is given by
\begin{align*}
	\cictom:=\setb{\restr{\varphi}{\Om}}
			   	  {\,\varphi\in\cirthree,
			   	   \;\supp\varphi\text{ compact in }\rthree,\;
			       \dist(\supp \varphi, \mathsf{\Ga}_{1})>0\,}\,.
\end{align*} 
We emphasize that in the case of a bounded domain, weighted and unweighted spaces 
coincide. Moreover, by \cite[Lemma 2.2]{osterbrink_time-harmonic_2019}, see also  
\cite[Theorem 4.5]{bauer_maxwell_2016}, it holds 
\begin{align}\label{equ:prel_weak=strong_1}
   \begin{split}
		\Hottom &=\setb{u\in\Hotom}{\scpltom{u}{\div\Phi}
		         =-\scpltom{\nabla u}{\Phi}\text{ for all }\Phi\in\cicnom}\,,\\
	    \Rttom &=\setb{E\in\Rtom}{
		        \scpltom{E}{\rot \Phi}=\scpltom{\rot E}{\Phi}
		        \text{ for all }\Phi\in\cicnom}\,,\\
	    \Dttom &=\setb{H\in\Dtom}{
		        \scpltom{H}{\nabla\phi}=-\scpltom{\div H}{\phi}
		        \text{ for all }\phi\in\cicnom}\,,
	\end{split}
\end{align}
and
\begin{align}\label{equ:prel_weak=strong_2}
    \begin{split}
	    \hottom &=\setb{u\in\hotom\,}{\scpltom{u}{\div \Phi}
	      		 =-\scpltom{\nabla u}{\Phi}\text{ for all }\Phi\in\cicnom}\,,\\
        \rttom &=\setb{E\in\rtom}{\scpltom{E}{\rot \Phi}
        		 =\scpltom{\rot E}{\Phi}\text{ for all }\Phi\in\cicnom}\,,\\
	    \dttom &=\setb{H\in\dtom}{\scpltom{H}{\nabla \phi}
	    		 =-\scpltom{\div H}{\phi}\text{ for all }\phi\in\cicnom}\,.
    \end{split}	
\end{align}
Equipped with their natural inner products, all these spaces are Hilbert spaces. 
Vanishing rotation resp. divergence will be indicated by an index zero in the 
lower left corner, e.g., 
\begin{alignat*}{2}
	\zrtom &:=\setB{E\in\rtom}{\rot E=0}\,,
	\qquad\qquad &
	\zdtom &:=\setB{E\in\dtom}{\div E=0}\,,\\
	\zrttom &:=\zrtom\cap\rttom\,,
	\qquad\qquad &
	\zdttom &:=\zdtom\cap\dttom\,.
\end{alignat*}
For simplification and to shorten notation we write
\begin{align*}
	\sfV_{<s}:=\bigcap_{t<s}\sfV_{t}
	\qquad\text{and}\qquad
	\sfV_{>s}:=\bigcup_{t>s}\sfV_{t}
	\qquad
	(\,s\in\reals\,)\,,
\end{align*}
for $\V_{\mathrm{t}}$ being any of the spaces above and skip the space reference, 
i.e.,
\begin{align*}
	\Hmt=\Hmtom\,,
	\qquad
	\rtt=\rttom\,,
	\qquad
	\Dt=\Dtom\,,
	\qquad
	\hmtn=\hmtnom\,,
	\qquad\ldots\,,
\end{align*}  
if $\Om=\rthree$. 
\begin{defi}
	Let $\ka\geq0$. We call a transformation $\ga$ 
	``$\ka$-decaying'', if   
	\vspace*{4pt}
	\begin{itemize}[itemsep=6pt]
		\item $\map{\ga}{\Om}{\rttt}$ is an $\li$-matrix field,
		\item $\ga$ is symmetric, i.e.,
			  \begin{align*}
			  		\forall\;E,H\in\ltom:\quad
			  		\scpltom{E}{\ga H}=\scpltom{\ga E}{H}\,,
			  \end{align*}
		\item $\ga$ is uniformly positive definite, i.e.,
			  \begin{align*}
			  	\exists\;c>0\;\;\forall\;E\in\ltom:\quad\scpltom{E}{\ga E}
			  	\geq c\cdot\normltom{E}^2\,,
			  \end{align*}
		\item $\ga$ is asymptotically a multiple of the identity, i.e.,
			  \begin{align*}
			  	\ga=\ga_{0}\cdot\mathbbm{1}+\hat{\ga}\text{ with }\ga_{0}\in
			  	\reals_{+}\text{ and }
			  	\hat{\ga}=\mathcal{O}\big(r^{-\ka}\big)\text{ as }
			  	r\To\infty\,.
			  \end{align*}
	\end{itemize}
\end{defi}
\noindent
\begin{genass}\label{gen-ass}
From now on and through this paper we assume the following:
\begin{itemize}
	\item $\Om\subset\rthree$ is an exterior weak Lipschitz domain with boundary 
		  $\Ga$, decomposed into two weak Lipschitz parts $\Ga_{1}$ and 
		  $\Ga_{2}=\Ga\sm\ovl{\Ga}_{1}$ with weak Lipschitz interface $\ovl{\Ga}_{1}\cap\ovl{\Ga}_{2}$
		  as introduced in the beginning of this section.
	\item There exists $\ka\geq 0$ such that $\eps=\eps_{0}\cdot\mathbbm{1}+\hat{\eps}$ and 
		  $\mu=\mu_{0}\cdot\mathbbm{1}+\hat{\mu}$ are $\ka$-decaying. 
\end{itemize}
\end{genass}
\noindent
For most of our results we need the slightly stronger assumption on the 
perturbations $\hat{\eps}$ and $\hat{\mu}$. That is, $\hat{\eps}$ 
resp.\;$\hat{\mu}$ have to be differentiable outside of an arbitrarily large 
ball with decaying derivative. More precisely: 
\begin{defi} 
	Let $\ka\geq0$. We call a transformation $\ga$ 
	\emph{``$\ka-\co-$decaying''}, if 
	\vspace*{4pt} 
	\begin{itemize}[itemsep=6pt]
		\item $\ga=\ga_{0}\cdot\mathbbm{1}+\hat{\ga}$ is $\ka$-decaying,
		\item and for some $\rtil>\rhat$ we have
			  \begin{align*}
			  	\hat{\ga}\in\co(\cU_{\rtil})
			  	\quad\text{with}\quad
			  	\p_{j}\hat{\ga}=\calO\big(r^{-1-\ka}\big)
			  	\;\text{ as }\;
		  		r\To\infty\,,\qquad (\,j=1,2,3\,).
			  \end{align*}
	\end{itemize}
\end{defi}
\noindent
Note that a $\ka$-decaying (\,resp.\;$\ka-\co-\,$decaying\,) transformation 
$\ga$ is pointwise invertible for sufficiently large $x$. In this sense, 
$\ga^{-1}$ is $\ka$-decaying (\,resp. $\ka-\co-\,$decaying) as well. 
Moreover,\\[-11pt] 
\begin{align*}
	\scp{\,\cdot\,}{\,\cdot\,}_{\ltgaom}	:=\scpltom{\,\ga\,\cdot\,}{\,\cdot\,}
	\quad\;\;\;\;\text{resp.}\quad\quad\;
	\scp{\,\cdot\,}{\,\cdot\,}_{\lttga(\Om)}
		:=\scpltom{\,\ga\rho^{t}\,\cdot\,}{\,\rho^{t}\,\cdot\,}
\end{align*}
define inner products on $\ltom$\;resp.\;$\lttom$ inducing norms equivalent to 
the standard ones. Thus 
\begin{align*}
	\ltgaom:=\big(\,\ltom,\scp{\,\cdot\,}{\,\cdot\,}_{\ltgaom}\,\big) 
	\qquad\;\text{and}\qquad\;
	\lttga(\Om):=\big(\,\lttom,\scp{\,\cdot\,}{\,\cdot\,}_{\lttga(\Om)}\,\big)	
\end{align*}
are Hilbert spaces and we use $\dsp\oplus_{\ga}$, $\dsp\oplus_{\mathrm{t},\ga}$ 
resp.\;$\dsp\perp_{\ga}$, $\dsp\perp_{\mathrm{t},\ga}$ to indicate orthogonal sum 
and orthogonal complement in this spaces. If $\ga=\mathbbm{1}$ we put 
$\oplus_{\ga}=:\oplus$ as well as $\perp_{\ga}\,=:\,\perp$. Finally we introduce 
for $s\in\reals$ the (\,weighted\,) 
\emph{``Dirichlet-Neumann fields''}
\begin{align*}
	\gharmstnom:=\zrstom\cap\ga^{-1}\zdsnom
	\,,\qquad\qquad
	\harmgen{}{s}{\Gat}{\Gan}(\Om):=\harmgen{\mathbbm{1}}{s}{\Gat}{\Gan}(\Om)\,,
\end{align*}
where as before we skip the weight if $s=0$.  
%
%
\section{The Static Problem $\om=0$}
\label{sec:e-m-static} 
%
%
We start our considerations with the supposedly simpler case of 
electro-magneto-statics, which in fact possesses its own difficulties. First, as 
$\Om$ is an exterior domain we are forced to work in polynomially weighted Sobolev 
spaces. Second, for $\om=0$ the time-harmonic Maxwell 
system \eqref{equ:int_max-sys},\eqref{equ:int_bd-cond}, i.e.,
\begin{alignat*}{4}
	\rot E&=G\;\,&&\;\text{in}\;\;\,\Om\,,
	\qquad\qquad & 
	n\times E &=0\;&&\;\text{on}\;\;\,\Gat\,,\\[2pt]
	\rot H&=F\;\;&&\;\text{in}\;\;\,\Om\,,
	\qquad\qquad & 
	n\times H&=0\;\,&&\;\text{on}\;\;\,\Gan\,,
\end{alignat*}
is no longer coupled and in order to determine $E$ and $H$ we have to add two 
more equations\footnote{For $\om\neq 0$ these equations are implicitly given, as 
by differentiating \eqref{equ:int_max-sys} we immediately get
\begin{align*}
	i\om\div\eps E=\div(-\rot H+i\om\eps E)=-\div F\,,
	\quad\quad
	i\om\div\mu H =\div(\rot E+i\om\mu H)=\div G
	\quad\quad
	\text{in }\Om\,.
\end{align*}}
\begin{align*}
	\div\eps E=f\,,
	\qquad\qquad
	\div\mu H=g\,,
	\qquad\qquad
	\text{in }\Om\,,
\end{align*} 
as well as additional boundary conditions 
\begin{align*}
	n\cdot\eps E=0\;\;\,\text{on}\;\;\,\Gan
	\,,\qquad\qquad 
	n\cdot\mu H=0\;\,\,\text{on}\;\;\,\Gat\,.
\end{align*}
The resulting boundary value problems of electro- resp.\;magneto-statics 
(\,cf. \eqref{equ:int_static-system}\,)
\begin{alignat}{4}
	\rot E&=G
	\,,\qquad\qquad &
	\div\eps E&=f
	\,,\qquad\qquad &
	n\times E &=0\;\;\,\text{on}\;\;\Gat
	\,,\qquad\qquad &
	n\cdot\eps E &=0\;\;\,\text{on}\;\;\Gan\,,\label{equ:stp_problem_es}\\[2pt]
	\rot H&=F
	\,,\qquad\qquad &
	\div\mu H&=g
	\,,\qquad\qquad &
	n\times H &=0\;\;\,\text{on}\;\;\Gan
	\,,\qquad\qquad &
	n\cdot\mu H &=0\;\;\,\text{on}\;\;\Gat\,,\label{equ:stp_problem_ms}
\end{alignat}
still have non-trivial but finite-dimensional kernels $\vHarmtn{\eps}(\Om)$ and 
$\vHarmnt{\mu}(\Om)$, respectively, demanding for finitely many orthogonality 
constraints to achieve unique solutions. Due to the similarity between 
\eqref{equ:stp_problem_es} and \eqref{equ:stp_problem_ms} we concentrate on the 
electro-static problem \eqref{equ:stp_problem_es}, keeping in mind that 
interchanging $\Gat$ and $\Gan$ as well as $\eps$ and $\mu$ we also solve the 
magneto-static system.\\[12pt]
Let $\Theta$ be a domain in $\rthree$. Considering the densely defined and 
closed linear operators 
\begin{align*}
	\maps{\A_{1}:=\grad_{\Gat}}
		 {\dod(\A_{1})&:=\Hozt(\Theta)\subset\lt(\Theta)}
		 {\lteps(\Theta)}{w}{\nabla w}\,,\\
	\maps{\A_{2}:=\rot_{\Gat}}
		 {\dod(\A_{2})&:=\Rzt(\Theta)\subset\lteps(\Theta)}
		 {\lt(\Theta)}{u}{\rot u}\,,
\end{align*}
the Hilbert space adjoints are
(\,cf. \cite[Lemma 2.2]{osterbrink_time-harmonic_2019}, 
\cite[Theorem 4.5]{bauer_maxwell_2016}\,)
\begin{align*}
	\maps{\adj{\A}_{1}=\adj{\grad}_{\Gat}=-\div_{\Gan}\eps}
		 {\dod(\adj{\A}_{1})=\eps^{-1}\Dzn(\Theta) &\subset\lteps(\Theta)}
		 {\lt(\Theta)}{u}{-\div\eps u}\,,\\
	\maps{\adj{\A}_{2}=\adj{\rot}_{\Gat}=\eps^{-1}\rot_{\Gan}}
	     {\dod(\adj{\A}_{2})=\Rzn(\Theta) &\subset\lt(\Theta)}
	     {\lteps(\Theta)}{u}{\eps^{-1}\rot u}\,.
\end{align*}
These operators satisfy 
\begin{align}\label{equ:stp_complex}
	\rg(\A_{1})\subset \ker(\A_{2})\,,
	\qquad\quad\qquad\quad
	\rg(\adj{\A}_{2})\subset\ker(\adj{\A}_{1})\,,	
\end{align}
and by the projection theorem the Helmholtz-type decompositions
\begin{align}\label{equ:stp_decomp}
	\left\{\;\;
		\begin{aligned}
			\lt(\Theta) 
			&=\ovl{\rg(\A_{1})}
	          \oplus_{\eps}
	   		  \ker(\overset{}{\A}^{\hspace*{-0.05cm}\ast}_{1})
	   		\quad\quad & 
			\lt(\Theta) 
			&=\ovl{\rg(\overset{}{\A}^{\hspace*{-0.05cm}\ast}_{2})}
	   	  	  \oplus_{\eps}\ker(\A_{2})\,,\\
	   		&=\ovl{}\ovl{\nabla\Hozt(\Theta)}
	   		  \oplus_{\eps}\eps^{-1}\zdzn(\Theta)\,,
	   		\quad\quad & 
		    &=\eps^{-1}\ovl{\rot\Rzn(\Theta)}
		      \oplus_{\eps}\zrzt(\Theta)\,,	\\[4pt]
		    \zrzt(\Theta) 
		    &=\ker(\A_{2})
			\quad\quad &
			\eps^{-1}\zdzn(\Theta) 
			&=\ker(\overset{}{\A}^{\hspace*{-0.05cm}\ast}_{1})\\
			&=\ovl{\rg(\A_{1})}
			  \oplus_{\eps}
			  \big(\,\ker(\overset{}{\A}^{\hspace*{-0.05cm}\ast}_{1})
			  \cap\ker(\A_{2})\,\big)
			\quad\quad &
			&=\ovl{\rg(\overset{}{\A}^{\hspace*{-0.05cm}\ast}_{2})}
			  \oplus_{\eps}\big(\,\ker(\A_{2})
			  \cap\ker(\overset{}{\A}^{\hspace*{-0.05cm}\ast}_{1})\,\big)\\
			&=\ovl{\nabla\Hozt(\Theta)}
			  \oplus_{\eps}\vHarmtn{\eps}(\Theta)\,,
			\quad\quad &
			&=\eps^{-1}\ovl{\rot\Rzn(\Theta)}
			  \oplus_{\eps}\vHarmtn{\eps}(\Theta)
		\end{aligned}
	\right.
\end{align}
hold true. As shown in \cite{pauly_solution_2019}, rewriting 
\eqref{equ:stp_problem_es} into
\begin{align}
	\A_{2} E=G\,,
	\qquad
	\overset{}{\A}^{\hspace*{-0.05cm}\ast}_{1}E=f\,,
	\qquad 
	E\in\dod(\A_{2})\cap\dod(\overset{}{\A}^{\hspace*{-0.05cm}\ast}_{1})
\end{align}
and using \eqref{equ:stp_decomp} together with standard functional analysis tools, 
we immediately obtain an $\lt$-solution theory for electro-magneto-statics, 
provided $\Theta$ satisfies \emph{``Weck's selection theorem''}, a compactness 
result comparable to Rellich's selection theorem well suited for Maxwell's 
equations.
\begin{defi}\label{def:stp_WST}
    A domain $\Theta\subset\rthree$ satisfies 
    {"Weck's selection theorem"} (WST) 
    (\,or possesses the {"Maxwell compactness property"}\,) if the embedding
    \begin{align}\label{equ:stp_WST}
	 	\Rzt(\Theta)\cap\eps^{-1}\Dzn(\Theta)
	 	\xhookrightarrow{\hspace{0.4cm}}\lt(\Theta)\text{ is compact\,.}
    \end{align}
\end{defi}
\noindent
In particular, as shown in \cite{bauer_maxwell_2016}
(see also \cite[Section 5]{pauly_solution_2019}), it holds:
\begin{lem}\label{lem:stp_fa-toolbox}
	Let $\Theta\subset\rthree$ be a bounded weak Lipschitz domain with boundary 
	$\Ga$ and weak Lipschitz interfaces $\Gat$ and $\Gan:=\Ga\sm\ovl{\Ga}_{1}$. 
	Then Weck's selection theorem holds true and implies the following:
	\begin{enumerate}[leftmargin=0.9cm,label=$(\roman*)$]
		\item $\mathrm{(}$\,\emph{Maxwell estimate}\,$\mathrm{)}$ There is 
			  $c>0$ such that for all
			  $E\in\Rgen{}{}{\Gat}(\Theta)\cap\eps^{-1}\Dzn(\Theta)
			  \cap\vHarmv{\eps}{\Gat}{\Gan}(\Theta)^{\perp_{\eps}}$ 
			  \begin{align*}
				\norm{E}_{\lt(\Theta)}
					\leq c\,\Big(\norm{\rot E}_{\lt(\Theta)}
			     		 +\norm{\div\eps E}_{\lt(\Theta)}\Big)\,.	  	
			  \end{align*}
		\item $\mathrm{(}$\,\emph{Finite dimensional kernel}\;$\mathrm{)}$ The 
			  unit ball in $\vHarmtn{\eps}(\Theta)$ is compact, i.e., 
			  \[\dim\vHarmtn{\eps}(\Theta)<\infty\,.\]
		\item $\mathrm{(}$\,\emph{Closed ranges}\,$\mathrm{)}$ The ranges of 
			  ${\operatorname{grad}_{\Gat}}$and\,
			  ${\rot_{\Gat}}$\,resp.\;$\div_{\Gan}\eps$ and 
			  ${\eps^{-1}\rot_{\Gan}}$\,are closed, i.e., 
			  \begin{alignat*}{2}
				 \ovl{\nabla\Hgen{}{1}{\Gat}(\Theta)}
				   &=\nabla\Hgen{}{1}{\Gat}(\Theta)\,,
				 \qquad\qquad &
				 \ovl{\rot\Rgen{}{}{\Gat}(\Theta)}
				   &=\rot\Rgen{}{}{\Gat}(\Theta)\,,\\
				 \ovl{\div\Dzn(\Theta)}
				   &=\div\Dzn(\Theta)\,,
				 \qquad\qquad &
				 \ovl{\rot\Rgen{}{}{\Gan}(\Theta)}
				   &=\rot\Rgen{}{}{\Gan}(\Theta)\,,
			  \end{alignat*}
			  and the following Helmholtz type decompositions are valid
			  \begin{alignat*}{2}
				\lt(\Theta) &=\nabla\Hozt(\Theta)
							  \oplus_{\eps}
							  \eps^{-1}\zdzn(\Theta)\,,
				\qquad\quad & 
				\lt(\Theta) &=\eps^{-1}\rot\Rzn(\Theta)
							  \oplus_{\eps}
							  \zrzt(\Theta)\,,\\
				\zrzt(\Theta) &=\nabla\Hozt(\Theta)
								\oplus_{\eps}
								\vHarmtn{\eps}(\Theta)\,,
				\qquad\quad &
				\eps^{-1}\zdzn(\Theta) &=\eps^{-1}\rot\Rzn(\Theta)
				  						 \oplus_{\eps}
				  						 \vHarmtn{\eps}(\Theta)\,.
			  \end{alignat*}
	\end{enumerate}
\end{lem}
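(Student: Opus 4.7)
The plan is to treat the compactness statement \eqref{equ:stp_WST} as the fundamental input and then derive (i)-(iii) by essentially abstract functional-analytic arguments built on top of it. The compact embedding itself, for bounded weak Lipschitz domains with weak Lipschitz boundary decomposition, is not new: it is exactly the Maxwell compactness property established in the authors' earlier work \cite{bauer_maxwell_2016}, so I would simply invoke that reference for the embedding \eqref{equ:stp_WST}. Everything else is a consequence.

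For (i), I would argue by contradiction in the standard way. Suppose no such constant $c$ exists; then there is a sequence $(E_n)$ in $\Rgen{}{}{\Gat}(\Theta)\cap\eps^{-1}\Dzn(\Theta)\cap\vHarmv{\eps}{\Gat}{\Gan}(\Theta)^{\perp_{\eps}}$ with $\norm{E_n}_{\lt(\Theta)}=1$ but $\norm{\rot E_n}_{\lt(\Theta)}+\norm{\div\eps E_n}_{\lt(\Theta)}\to0$. By \eqref{equ:stp_WST} a subsequence converges in $\lt(\Theta)$ to some limit $E$, and passage to the limit in the weak definitions \eqref{equ:prel_weak=strong_1} yields $E\in\Rzt(\Theta)\cap\eps^{-1}\Dzn(\Theta)$ with $\rot E=0$ and $\div\eps E=0$, hence $E\in\vHarmtn{\eps}(\Theta)$. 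Since the orthogonality condition against $\vHarmtn{\eps}(\Theta)$ is closed in $\lteps(\Theta)$, we still have $E\perp_{\eps}\vHarmtn{\eps}(\Theta)$, forcing $E=0$; this contradicts $\norm{E}_{\lt(\Theta)}=1$. Statement (ii) is immediate: the closed unit ball of $\vHarmtn{\eps}(\Theta)$ sits inside the space appearing in \eqref{equ:stp_WST} and all fields there satisfy $\rot E=\div\eps E=0$, so the unit ball is precompact in $\lt(\Theta)$, whence compact by closedness of the space; Riesz's lemma then gives $\dim\vHarmtn{\eps}(\Theta)<\infty$.

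For (iii), I would combine the Maxwell estimate of (i) with the abstract operator framework already introduced in \eqref{equ:stp_complex}, \eqref{equ:stp_decomp}. The rotation ranges: from (i), $\A_{2}=\rot_{\Gat}$ restricted to $\dod(\A_{2})\cap\ker(\A_{2})^{\perp_{\eps}}$ has a bounded inverse on its range (modulo the Dirichlet-Neumann fields that have to be projected away by the finite-dimensional $\vHarmtn{\eps}(\Theta)$), so $\rg(\A_{2})=\rot\Rgen{}{}{\Gat}(\Theta)$ is closed; the closed range theorem then gives $\rg(\adj{\A}_{2})=\eps^{-1}\rot\Rgen{}{}{\Gan}(\Theta)$ closed as well. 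For the gradient operator $\A_{1}=\grad_{\Gat}$, the corresponding Friedrichs/Poincaré-type estimate on $\Hozt(\Theta)$ follows from Rellich's theorem applied to the bounded weak Lipschitz domain (with $\Gat\neq\emptyset$, otherwise the argument still works modulo constants on each connected component, but in either case closed range follows from $\rg(\A_{1})\subset\ker(\A_{2})$ together with (i) applied to gradients). Hence both $\nabla\Hgen{}{1}{\Gat}(\Theta)$ and $\div\Dzn(\Theta)=\rg(\adj{\A}_{1})$ are closed. With all ranges closed, the overlines in \eqref{equ:stp_decomp} may be dropped, and inserting the refined identifications $\ovl{\rg(\A_{1})}=\nabla\Hozt(\Theta)$, $\ovl{\rg(\adj{\A}_{2})}=\eps^{-1}\rot\Rzn(\Theta)$, and $\ker(\adj{\A}_{1})\cap\ker(\A_{2})=\vHarmtn{\eps}(\Theta)$ into the four decompositions of \eqref{equ:stp_decomp} produces precisely the four Helmholtz-type decompositions claimed in (iii).

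The only place requiring genuine care, and the step I would flag as the main technical point, is verifying that the limiting procedure in the contradiction argument for (i) really preserves the mixed boundary conditions; this is exactly what \eqref{equ:prel_weak=strong_1} is designed for, since those weak characterizations are stable under $\lt$-convergence of $E_n$ together with $\lt$-boundedness of $\rot E_n$ and $\div\eps E_n$. Once this is in place, the rest of the lemma is a largely mechanical unpacking of the abstract Hilbert-space framework used already in \cite{pauly_solution_2019}.
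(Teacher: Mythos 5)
Your proposal is correct and follows exactly the route the paper intends: the paper itself only cites \cite{bauer_maxwell_2016} and \cite{pauly_solution_2019} for this lemma, and those references (as well as the paper's own proof of the weighted exterior-domain analogue, Lemma \ref{lem:stp_toolbox-reproduced}) use precisely your scheme — Weck's selection theorem as the sole input, a compactness-contradiction argument for the Maxwell estimate with the limit's boundary conditions preserved via the weak characterizations, Riesz for the finite-dimensional kernel, and the closed range theorem applied to the reduced operators to drop the closures in \eqref{equ:stp_decomp}. No gaps worth flagging.
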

\begin{rem}
	In the latter lemma and the previous arguments (\,involving $\Theta$\,) it is 
	sufficient that $\eps$ is $\ka-$decaying with $\ka\geq0$. 
\end{rem}
\noindent
While Weck's selection theorem holds true for bounded weak Lipschitz 
domains, it fails for unbounded such as exterior domains 
(\,cf.\;\cite{bauer_maxwell_2016}, \cite{jochmann_compactness_1997} and also
\cite{fernandes_magnetostatic_1997} for strong Lipschitz domains). Thus, we cannot 
retreat on the functional analysis toolbox from \cite{pauly_solution_2019}, 
especially Lemma \ref{lem:stp_fa-toolbox}, to solve system 
\eqref{equ:stp_problem_es}. Instead we will use a slightly weaker version of 
\eqref{equ:stp_WST} to prove similar results in weighted $\lt-\,$spaces. 
More precisely, as $\Omda$ is a bounded weak Lipschitz domain with boundary 
$\Ga=\ovl{\Ga}_{\da}\cup\ovl{\Ga}_{2}$, Lemma \ref{lem:stp_fa-toolbox} yields, 
e.g.,
\begin{align*}
	\forall\;\da\geq\rhat:        
    \quad
    \Rgen{}{}{\Ga_{\da}}(\Omda)\cap\eps^{-1}&\Dzn(\Omda)
    \xhookrightarrow{\hspace{0.4cm}}\lt(\Omda)\quad\text{ is compact\,.}
\end{align*}
Hence by \cite[Lemma 3.3]{osterbrink_time-harmonic_2019} it holds:
\begin{theo}[Weck's local selection theorem]\label{thm:stp_WLST}
	The embedding 
	\[\Rztom\cap\eps^{-1}\Dznom\xhookrightarrow{\hspace{0.4cm}}\ltlocomb\] 
    is compact. Equivalently for all $s,t\in\reals$ with $t<s$ the embedding 
    \[\Rstom\cap\eps^{-1}\Dsnom\xhookrightarrow{\hspace{0.4cm}}\lttom\] 
    is compact.
\end{theo}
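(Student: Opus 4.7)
The plan is to establish the local compactness statement first and then derive the weighted equivalent by a decay argument, in both cases leveraging the Maxwell compactness property on bounded truncations provided by Lemma \ref{lem:stp_fa-toolbox}.

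For the local statement, I would fix radii $\rtil>\da\geq\rhat$ and a cutoff $\chi\in\ci(\rthree)$ with $\chi=1$ on $\U_{\da}$ and $\supp\chi\Subset\U_{\rtil}$. Given a bounded sequence $(E_{n})\subset\Rztom\cap\eps^{-1}\Dznom$, the fields $\chi E_{n}$, extended by zero to $\Omrtil$, satisfy the Leibniz-type identities $\rot(\chi E_{n})=\chi\rot E_{n}+\nabla\chi\times E_{n}\in\lt(\Omrtil)$ and $\div(\eps\chi E_{n})=\chi\div\eps E_{n}+\eps E_{n}\cdot\nabla\chi\in\lt(\Omrtil)$. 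Partition $\p\Omrtil=\ovl{\tGat}\cup\ovl{\tGan}$ with $\tGat:=\Gat\cup\Sp_{\rtil}$ and $\tGan:=\Gan$; since $\chi\equiv 0$ in a neighbourhood of $\Sp_{\rtil}$, the truncated field $\chi E_{n}$ inherits the homogeneous tangential trace on all of $\tGat$ and, likewise, $\eps\chi E_{n}$ retains its vanishing normal trace on $\tGan=\Gan$. Hence $(\chi E_{n})$ is a bounded sequence in $\Rgen{}{}{\tGat}(\Omrtil)\cap\eps^{-1}\Dgen{}{}{\tGan}(\Omrtil)$, and Lemma \ref{lem:stp_fa-toolbox} yields a subsequence converging in $\lt(\Omrtil)$, hence in $\lt(\Omda)$ because $\chi=1$ there. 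A standard diagonal extraction along an exhausting sequence $\da_{k}\To\infty$ produces a subsequence convergent in $\ltlocomb$.

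For the equivalent weighted statement with $t<s$, I would deduce it from the local one by exploiting the gap in weights. Let $(E_{n})$ be bounded in $\Rstom\cap\eps^{-1}\Dsnom$, and fix $\varepsilon>0$. Choose $\da>\rhat$ so large that $\rho^{t-s}(x)<\varepsilon$ whenever $|x|>\da$; then
\begin{align*}
\normlttws{E_{n}}_{\lt(\Om\sm\Omda)}
\;\leq\;\varepsilon\cdot\normltsws{E_{n}}_{\lt(\Om\sm\Omda)}
\;\leq\;\varepsilon\cdot C\,,
\end{align*}
uniformly in $n$. Since $(E_{n})$ is a fortiori bounded in $\Rztom\cap\eps^{-1}\Dznom$, the first part delivers a subsequence that is Cauchy in $\lt(\Omda)$, on which the $\lttom$- and $\lt$-norms are equivalent. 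Combining both contributions yields a sequence that is $\varepsilon$-approximately Cauchy in $\lttom$; iterating with $\varepsilon=1/k$ and extracting diagonally completes the proof.

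The main obstacle will be Step 1, specifically justifying that cutting off by $\chi$ and extending by zero across $\Sp_{\rtil}$ lands the field genuinely in the weak homogeneous-trace space $\Rgen{}{}{\tGat}(\Omrtil)$ (and analogously for the divergence side). This is precisely the weak-versus-strong trace compatibility ensured by the characterisations \eqref{equ:prel_weak=strong_1}--\eqref{equ:prel_weak=strong_2} on the admissible weak Lipschitz configuration $(\Omrtil,\tGat,\tGan)$: one tests against $\varphi\in\cicntom[\tilde{}]$, writes out $\scp{\chi E_{n}}{\rot\varphi}_{\lt(\Omrtil)}$, and uses $\chi\varphi\in\cicnom$ as a legitimate test field on the original domain. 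Once this bookkeeping of admissible test fields on the truncated versus the original partition is verified, everything else reduces to routine product-rule manipulations and a diagonal extraction.
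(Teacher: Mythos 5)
Your argument follows exactly the route the paper intends: the paper itself defers the details to \cite[Lemma 3.3]{osterbrink_time-harmonic_2019}, but the sketch immediately preceding the theorem is precisely your Step 1 — truncate with a cutoff, absorb the artificial sphere $\Sp_{\rtil}$ into the Dirichlet part $\Ga_{1,\rtil}=\Gat\cup\Sp_{\rtil}$, verify the homogeneous traces of $\chi E_n$ via the weak characterisations \eqref{equ:prel_weak=strong_1}, apply Lemma~\ref{lem:stp_fa-toolbox} on the bounded weak Lipschitz domain $\Omrtil$, and diagonalise; the weighted version then follows from the decay of $\rho^{t-s}$ for $t<s$. The one slip is the phrase ``a fortiori bounded in $\Rztom\cap\eps^{-1}\Dznom$'': this is false when $s<0$, which the theorem permits, since then $\lgen{}{2}{s}(\Om)\supset\lt(\Om)$ and boundedness in the $s$-weighted norm does not control the unweighted norm. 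The repair is immediate and does not change the architecture: your cutoff argument applies verbatim to a sequence bounded in $\Rstom\cap\eps^{-1}\Dsnom$ for arbitrary $s$, because on the bounded set $\Omrtil$ the weight $\rho^{s}$ is bounded above and below, so the truncated fields are still bounded in $\Rgen{}{}{\tGat}(\Omrtil)\cap\eps^{-1}\Dgen{}{}{\tGan}(\Omrtil)$ and the local Cauchy property on each $\Omda$ is available without passing through the unweighted space on all of $\Om$.
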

\noindent
As we will show in the following, by Theorem \ref{thm:stp_WLST} we are indeed able 
to reconstruct the results of Lemma \ref{lem:stp_fa-toolbox} in the framework of 
weighted Sobolev spaces, leading to a solution theory for 
\eqref{equ:stp_problem_es}\;resp.\;\eqref{equ:stp_problem_ms} in exterior domains.
%
\subsection{Poincar\'e and Maxwell Estimates in Exterior Domains} %
%
We start out proving a weighted version of the Poincar\'e estimate. 
From \cite[Lemma 15]{pauly_functional_2009}, see 
also \cite[Poincare's estimate III]{leis_initial_2013}, we have 
\begin{align}\label{equ:stp_poincare-smooth}
	\norm{\phi}_{\ltmo(\Om)}
		\leq\|\,r^{-1}\phi\,\|_{\ltom}
		\leq\;2\,\normltom{\nabla \phi}
	\qquad
	\forall\;\phi\in\cicgom\,,
\end{align}
which by continuity extends to all $u\in\homog(\Om)$ and can even be generalized 
to functions in $\homo(\Om)$.
\begin{lem}\label{lem:stp_poincare}
	The following Poincar\'e 
	estimate holds:
	\begin{align*}
		\exists\;c>0\quad\forall\;u\in\homoom\,:
		\quad\qquad
		\norm{u}_{\ltmoom}\leq c\,\norm{\nabla u}_{\ltom}
	\end{align*}
\end{lem}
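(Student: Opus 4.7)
The plan is to reduce the estimate to the whole-space Hardy--Poincar\'e inequality via an extension from $\Om$ to $\rthree$, and then to eliminate a spurious local $\lt$-term by a Rellich-type compactness argument. The subtle point is that \eqref{equ:stp_poincare-smooth} and its closure up to $\homogom$ require vanishing traces on $\Ga$, whereas $\homoom$ allows arbitrary boundary behaviour.

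First, I would upgrade \eqref{equ:stp_poincare-smooth} to all of $\homorthree$. For $v\in\homorthree$, approximate by $(1-\eta_{R})v$; the cut-off error in the gradient is controlled by
\begin{align*}
	\norm{v\,\nabla\eta_{R}}_{\ltrthree}^{2}
	\,\leq\, C\int_{R\leq r\leq 2R}|v|^{2}\rho^{-2}\,dx
	\;\xrightarrow[R\to\infty]{}\;0,
\end{align*}
since $|\nabla\eta_{R}|\leq C\rho^{-1}$ on its support and $v\in\ltmorthree$. A subsequent mollification shows that $\cicrthree$ is dense in $\homorthree$, so \eqref{equ:stp_poincare-smooth} extends to
\begin{align*}
	\norm{v}_{\ltmorthree}\,\leq\, c_{0}\,\norm{\nabla v}_{\ltrthree}
	\qquad\text{for all }v\in\homorthree.
\end{align*}

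Second, for any $u\in\homoom$ I would construct an extension $\tilde u\in\homorthree$ with $\tilde u|_{\Om}=u$. Fix $R>\rhat$, so that $\cU_{R}\subset\Om$. Since $\Om_{R}=\Om\cap\U_{R}$ is a bounded weak Lipschitz domain, the classical Sobolev extension theorem provides $\hat u\in\ho(\U_{R})$ with $\hat u=u$ on $\Om_{R}$ and $\norm{\hat u}_{\ho(\U_{R})}\leq C\norm{u}_{\ho(\Om_{R})}$. Gluing $\hat u$ on $\U_{R}$ to $u$ on $\cU_{R}$ produces a function $\tilde u$ whose traces on $\Sp_{R}$ agree (both equal the trace of $u$), hence $\tilde u\in\holocrthree\cap\homorthree$ with
\begin{align*}
	\norm{\nabla\tilde u}_{\ltrthree}\,\leq\, C\bigl(\,\norm{u}_{\lt(\Om_{R})}+\norm{\nabla u}_{\ltom}\,\bigr).
\end{align*}
Combining with the whole-space estimate applied to $\tilde u$ yields the preliminary bound
\begin{align*}
	\norm{u}_{\ltmoom}\,\leq\,\norm{\tilde u}_{\ltmorthree}\,\leq\, c_{0}\,\norm{\nabla\tilde u}_{\ltrthree}\,\leq\, C\bigl(\,\norm{u}_{\lt(\Om_{R})}+\norm{\nabla u}_{\ltom}\,\bigr).
\end{align*}

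Finally, the local $\lt$-term is removed by contradiction. Suppose the lemma failed and pick $(u_{n})\subset\homoom$ with $\norm{u_{n}}_{\ltmoom}=1$ and $\norm{\nabla u_{n}}_{\ltom}\to 0$. The sequence is bounded in $\homoom$, hence in $\ho(\Om_{R'})$ for every $R'>\rhat$, and by the preliminary bound also bounded away from zero in $\lt(\Om_{R})$. Rellich's theorem together with a diagonal extraction yields a subsequence converging strongly in $\lt(\Om_{R'})$ for every $R'>\rhat$ and weakly in $\homoom$ to a limit $u_{*}$ satisfying $\nabla u_{*}=0$. Thus $u_{*}$ is constant on the connected domain $\Om$; since $\int_{|x|>\rhat}\rho^{-2}\,dx=\infty$, constants do not belong to $\ltmoom$ for any exterior $\Om\subset\rthree$, forcing $u_{*}=0$. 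Passing to the limit in the preliminary estimate then gives $1\leq 0$, the required contradiction. The principal obstacle is the clean production of the extension together with its $\homorthree$-norm bound; once this is in hand, the Rellich argument proceeds routinely.
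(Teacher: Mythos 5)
Your proof is correct, and its overall skeleton -- a preliminary estimate of the form $\norm{u}_{\ltmoom}\leq c\,(\norm{\nabla u}_{\ltom}+\norm{u}_{\lt(\Om_{R})})$ followed by the Rellich contradiction argument to absorb the local term -- coincides with the paper's. The one place where you genuinely diverge is in how the preliminary estimate is produced. The paper multiplies $u$ by the cut-off $\eta_{\rhat}$, which is supported in $\cU_{\rhat}\subset\Om$, so that $\eta_{\rhat}u\in\homog(\Om)$ and the Hardy-type inequality \eqref{equ:stp_poincare-smooth} (already closed up from $\cicgom$) applies directly on $\Om$; the commutator term $(1-\eta_{\rhat})u$ lives in $\Om_{2\rhat}$ and gives the local remainder. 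You instead extend $u$ across $\Ga$ to all of $\rthree$ via a Sobolev extension on the bounded piece $\Om_{R}$ and invoke the whole-space Hardy inequality for $\homorthree$. Both work, but your route needs two extra ingredients the paper's avoids: the density of $\cicrthree$ in $\homorthree$ (your cut-off-plus-mollification step, which is fine), and an $\Hgen{}{1}{}$-extension operator for the bounded \emph{weak} Lipschitz domain $\Om_{R}$. The latter is true (extend in bi-Lipschitz charts by reflection, since $\Hgen{}{1}{}$ is invariant under Lipschitz coordinate changes), but it is a nontrivial piece of machinery for a lemma that can be had by simply staying away from the boundary; the interior cut-off is the cheaper move here. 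The final compactness step is identical in both arguments, and your observation that constants fail to lie in $\ltmoom$ because $\int_{r>\rhat}\rho^{-2}\,dx=\infty$ is exactly the paper's reason for concluding $u_{*}=0$.
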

\begin{proof}
	For $u\in\homoom$ it holds $\eta_{\rhat}\hspace*{0.02cm}u\in\homog(\Om)$ 
	and from \eqref{equ:stp_poincare-smooth} we obtain
	\begin{align}\label{equ:stp_poincare-mud}
		\norm{u}_{\ltmoom}
			&\leq 2\,\norm{\nabla(\eta_{\rhat}\hspace*{0.02cm}u)}_{\ltmoom}
			 +\norm{(1-\eta_{\rhat})\hspace*{0.02cm}u}_{\ltmoom}
			 \leq c\,\Big(\norm{\nabla u}_{\ltom}
			 +\norm{u}_{\lt(\Om_{2\rhat})}\Big)\,,
	\end{align}
	with $c>0$. Assuming the asserted estimate is wrong, there exists a sequence 
	$(u_{n})_{n\in\N}\subset\homo(\Om)$ with
	\begin{align*}
		\norm{u_{n}}_{\ltmo(\Om)}=1
		\qquad\text{and}\qquad
		\norm{\nabla {u_n}}_{\ltom}<\frac{1}{n}
		\xrightarrow{\;n\to\infty\;} 0\,.
	\end{align*}
	Hence, $(u_{n})_{n\in\N}$ is bounded in $\ho(\Om_{2\rhat})$ and by Rellich's 
	selection theorem\footnote{Note that also Rellich's selection theorem 
	holds in bounded weak Lipschitz domains 
	(\,cf. \cite[Theorem 4.8]{bauer_maxwell_2016}\,).} we can extract a 
	subsequence $(u_{\pi(n)})_{n\in\N}$ converging in $\lt(\Om_{2\rhat})$. By 
	\eqref{equ:stp_poincare-mud} the sequence $(u_{\pi(n)})_{n\in\N}$ is even a 
	Cauchy sequence in $\homo(\Om)$ and therefore converging to some 
	$u\in\homo(\Om)$ with $\nabla u=0$. Consequently $u$ is constant in $\Om$ and 
	as $u\in\ltmoom$ we have $u=0$, a contradiction.
\end{proof}
\noindent
Similarly, Weck's local selection theorem yields a weighted version of the Maxwell 
estimate. Again we start with testfields $\Phi\in\cicgom$ stating that by 
\eqref{equ:stp_poincare-smooth} and 
$-\Delta\Phi=\rot\rot\Phi-\nabla\div\Phi$ we have 
\begin{align}\label{equ:stp_div-rot-est_smooth}
	\normltmoom{\Phi}
	    \leq c\,\normltom{\nabla\Phi}
		\leq c\,\left(\,\normltom{\rot\Phi}
		     +\normltom{\div\Phi}\,\right)
	\qquad
	\forall\;\Phi\in\cicgom\,.
\end{align}
which directly extends to $\Phi\in\homog(\Om)$ and can also be 
generalized. 
\begin{lem}\label{lem:stp_reg-res}
		Let $s\in\reals$, $\rtil>\rhat$, and 
		$\Xi\subset\cU_{\rtil}\subset\rthree$ be an 
		exterior domain with $\dist(\Xi,\Sp_{\rtil})>0$. Furthermore, let 
		$\eps$ be $\ka-\co-$decaying with 
		$\ka>0$ such that $\eps\in\co(\cU_{\rtil})$. Then the conditions 
		$E\in\lts(\cU_{\rtil})$, $\rot E\in\ltspo(\cU_{\rtil})$, and 
		$\div\eps E\in\ltspo(\cU_{\rtil})$ imply $E\in\hos(\Xi)$ and it holds
		\begin{align*}
			\norm{E}_{\hos(\Xi)}
				\leq c\,\Big(\norm{E}_{\lts(\cU_{\rtil})}
							 +\norm{\rot E}_{\ltspo(\cU_{\rtil})}
							 +\norm{\div\eps E}_{\ltspo(\cU_{\rtil})}\Big)
		\end{align*}
		with $c>0$ independent of $E$.
\end{lem}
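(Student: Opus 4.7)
The lemma is a weighted interior $\hos$-regularity estimate for the perturbed div-curl system in the exterior domain $\cU_\rtil$, under the extra assumption that $\eps$ is $\co$-decaying there. The natural strategy is to combine the algebraic identity $-\Delta=\rot\rot-\nabla\div$ with a localization against a cut-off, thereby reducing the question to weighted elliptic regularity for the Laplacian on $\rthree$.

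First I would pick a cut-off $\chi\in\ci(\rthree)$ with $\supp\chi\subset\cU_\rtil$, $\chi\equiv 1$ on a neighbourhood of $\Xi$, and $\dist(\supp\chi,\Sp_\rtil)>0$. Setting $\tilde E:=\chi E$ (extended by zero to $\rthree$) one has $\tilde E\in\lts(\rthree)$, and the commutator identities
\begin{align*}
\rot\tilde E=\chi\rot E+\nabla\chi\times E,\qquad
\div(\eps\tilde E)=\chi\div\eps E+(\eps E)\cdot\nabla\chi
\end{align*}
show that both lie in $\ltspo(\rthree)$ with norm controlled by the three quantities on the right-hand side of the claimed estimate (the commutator terms have compact support in $\cU_\rtil$). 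Writing $\eps=\eps_0\mathbbm{1}+\hat\eps$, one further obtains
\begin{align*}
\eps_0\div\tilde E=\div(\eps\tilde E)-(\div\hat\eps)\cdot\tilde E-\hat\eps:\nabla\tilde E,
\end{align*}
and therefore, in distributional form on $\rthree$,
\begin{align*}
-\eps_0\,\Delta\tilde E+\nabla\bigl(\hat\eps:\nabla\tilde E\bigr)=\eps_0\,\rot\rot\tilde E-\nabla\div(\eps\tilde E)+\nabla\bigl((\div\hat\eps)\cdot\tilde E\bigr).
\end{align*}
By the $\ka{-}\co$-decay hypothesis $|\hat\eps|=\calO(r^{-\ka})$ and $|\nabla\hat\eps|=\calO(r^{-1-\ka})$, so all terms on the right are controlled in $\hosmo(\rthree)$ by the data.

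The second step is to apply standard whole-space weighted elliptic regularity for $-\Delta$: a function $u\in\lts(\rthree)$ with $\Delta u\in\hosmo(\rthree)$ lies in $\hos(\rthree)$ with a corresponding estimate. The perturbation term $\nabla(\hat\eps:\nabla\tilde E)=\nabla\hat\eps:\nabla\tilde E+\hat\eps:\nabla^{2}\tilde E$ is second-order but carries the small coefficient $\hat\eps=\calO(r^{-\ka})$ at infinity. Splitting $\tilde E=\eta_\da\tilde E+(1-\eta_\da)\tilde E$ for $\da\geq\rtil$ chosen so large that $|\eps_0^{-1}\hat\eps|<1/2$ on $\cU_\da$, the perturbation in the outer part (where $\hat\eps$ is small) is absorbed into the left-hand side, while the inner part $(1-\eta_\da)\tilde E$ has compact support and is handled by standard interior elliptic regularity on a bounded ball. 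Restricting to $\Xi$, where $\chi\equiv 1$, then yields the claim.

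The principal technical obstacle is the whole-space weighted $H^{1}$-regularity for $-\Delta$, which for general $s\in\reals$ follows from Fourier-multiplier theory (boundedness of the Riesz transforms on polynomially weighted $\lt(\rthree)$, up to a finite set of exceptional weights). Granted this building block, the absorption of the perturbation caused by $\hat\eps$ is routine provided $\ka>0$; strict positivity of $\ka$ is used only to guarantee smallness of $\hat\eps$ outside a sufficiently large ball, and not to extract additional decay in the weight exponent.
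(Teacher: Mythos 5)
The paper does not actually prove this lemma; it quotes it from the literature (K\"uhn--Pauly and the second author's thesis), so you were reconstructing that argument from scratch. Your architecture --- cut off with $\chi$, control $\rot\tilde E$ and $\div(\eps\tilde E)$ through the compactly supported commutators, reduce to a whole-space estimate for $-\Delta=\rot\rot-\nabla\div$, and absorb the $\hat\eps$-perturbation by splitting into a large ball (interior regularity) and its complement (where $\hat\eps$ is small) --- is exactly the standard route behind the cited result, and the perturbation step is handled correctly.

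The genuine gap is the analytic core. You justify the whole-space step by boundedness of the Riesz transforms on polynomially weighted $\lt(\rthree)$ ``up to a finite set of exceptional weights''. That is not the situation: $R_jR_k$ is bounded on $\lgen{}{2}{\sigma}(\rthree)$ only when $\rho^{2\sigma}$ satisfies the Muckenhoupt condition, i.e.\ for $|\sigma|<3/2$, so an entire half-line of weights is excluded, not finitely many. The lemma is asserted for every $s\in\reals$, and the paper really needs that generality: at the end of the proof of Corollary \ref{cor:lfa_a-priori-estimate} it is invoked with weight $s-\ka-1<-3/2$ (since $\ka>2$), which lies outside the admissible multiplier range. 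So the Fourier-multiplier route cannot deliver the stated result. The repair is elementary and explains why $\norm{E}_{\lts(\cU_{\rtil})}$ sits on the right-hand side of the estimate: for $\Phi\in\cic$ integrate $\rho^{2(s+1)}\big(|\nabla\Phi|^2-|\rot\Phi|^2-|\div\Phi|^2\big)$ by parts; the unweighted identity is exact, and the weighted commutator terms carry one power of $\rho$ less, hence are bounded by $c\,\norm{\Phi}_{\lts}\big(\norm{\Phi}_{\lts}+\norm{\nabla\Phi}_{\ltspo}\big)$ and absorbed by Young's inequality, for \emph{every} $s\in\reals$. One then passes to general $E$ by mollification (note also that your expression $\hat\eps:\nabla\tilde E$ presupposes local $\Hgen{}{1}{}$-regularity, which must be secured first --- available here since $\eps$ is $\co$ with bounded derivative on $\cU_{\rtil}$), and substitutes $\div\Phi=\eps_0^{-1}\big(\div\eps\Phi-(\div\hat\eps)\cdot\Phi-\hat\eps:\nabla\Phi\big)$, absorbing the last term exactly as you propose. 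A minor notational slip: ``$\Delta u\in\hosmo(\rthree)$'' is not the right space for what you mean (a divergence of $\ltspo$-fields, i.e.\ a negative-order object), though the intent is clear.
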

\begin{proof}
	This regularity result is a direct consequence of 
	\cite[Lemma 4.2]{kuhn_regularity_2010}\,.
	A detailed proof can be found in 
	\cite[Korollar 3.7]{pauly_niederfrequenzasymptotik_2003}.
\end{proof}
\begin{rem}
	By obvious modifications on $\eps$ and the assumptions imposed on $E$, the 
	previous result can also be formulated for the bold Hilbert spaces, e.g., 
	$\Hos(\Xi)$. 	
	Beyond that it may even be generalized to higher regularity for $E$, e.g., 
	$E\in\hgen{}{m}{s}(\Xi)$. We also note that the assumptions on $\eps$ may be 
	reduced to a $\ka-$decaying $\eps=\eps_{0}\cdot\mathbbm{1}+\hat{\eps}$ with 
	$\ka>0$ and $\hat{\eps}\in\co(\cU_{\rtil})$ such that 
	$\p_{j}\hat{\eps}=\calO\big(r^{-1}\big)$.
\end{rem}
\begin{lem}\label{lem:stp_max-est_mud}
	Let $\eps$ be $\ka-\co-$decaying with order $\ka>0$. Then there exist 
	$c,\da>0$ such that 
	\begin{align*}
		\norm{E}_{\ltmoom}
			\leq c\,\Big(\normltom{\rot E}
			     +\normltom{\div\eps E}
			     +\norm{E}_{\lt(\Om_{\da})}\Big)\,
	\end{align*}
	holds for all $E\in\rmoom\cap\eps^{-1}\dmoom$.
\end{lem}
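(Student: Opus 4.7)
The plan is to reduce to the smooth Maxwell-type estimate \eqref{equ:stp_div-rot-est_smooth} via the cutoff $\eta_\da$, using Lemma~\ref{lem:stp_reg-res} to furnish the local $\ho$-regularity needed for $\Phi := \eta_\da E$ to lie in $\homog(\Om)$, and handling the unavoidable discrepancy between $\div\Phi$ and $\div(\eps\Phi)$ via the decay of the perturbation $\hat\eps := \eps - \eps_0\mathbbm{1}$. The main obstacle is precisely this last discrepancy: the smooth estimate controls $\div\Phi$, whereas our data control $\div(\eps\Phi)$ only, and the mismatch must be made quantitatively small in $\da$ to permit absorption.

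In detail, I would fix $\rtil > \rhat$ so that $\eps \in \co(\cU_\rtil)$ and pick $\da > \rtil$ to be adjusted later. The cutoff $\Phi := \eta_\da E$ vanishes on $\U_\da \supset \rthree\sm\Om$, hence near $\Ga$, and the piece $(1-\eta_\da)E$ has support in $\Om_{2\da}$ with $\normltmoom{(1-\eta_\da)E} \leq c\,\norm{E}_{\lt(\Om_{2\da})}$. Lemma~\ref{lem:stp_reg-res} applied with $s=-1$ on $\Xi = \cU_\da$ yields $E \in \homo(\cU_\da)$ together with
\[
\norm{\nabla E}_{\lt(\cU_\da)} \leq c\,\bigl(\normltmoom{E} + \normltom{\rot E} + \normltom{\div\eps E}\bigr),
\]
so $\Phi$ has $\lt$-gradient and $\Phi \in \homog(\Om)$. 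The density-extension of \eqref{equ:stp_div-rot-est_smooth} to $\homog(\Om)$ thus gives $\normltmoom{\Phi} \leq c\bigl(\normltom{\rot\Phi} + \normltom{\div\Phi}\bigr)$, while Leibniz' rule bounds $\normltom{\rot\Phi} + \normltom{\div(\eps\Phi)}$ by $\normltom{\rot E} + \normltom{\div\eps E} + c_\da\,\norm{E}_{\lt(\Om_{2\da})}$.

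To control $\div\Phi$ itself, on $\supp\Phi \subset \cU_\rtil$ I would write $\eps_0\div\Phi = \div(\eps\Phi) - \div(\hat\eps\Phi)$; applying the product rule (valid since $\Phi\in\ho$ and $\hat\eps\in\co$ there) together with the decay bounds $|\hat\eps| = O(r^{-\ka})$ and $|\p_j\hat\eps| = O(r^{-1-\ka})$ valid on $\cU_\rtil$ yields
\[
\normltom{\div(\hat\eps\Phi)} \leq c\,\da^{-\ka}\,\bigl(\normltmoom{\Phi} + \normltom{\nabla\Phi}\bigr).
\]
Assembling these bounds, using $\normltmoom{E} \leq \normltmoom{\Phi} + c\,\norm{E}_{\lt(\Om_{2\da})}$, and inserting the earlier bound on $\norm{\nabla E}_{\lt(\cU_\da)}$ (which together with the contribution from $\nabla\eta_\da$ controls $\norm{\nabla\Phi}_{\ltom}$), one arrives at
\[
\normltmoom{E} \leq C\,\bigl(\normltom{\rot E} + \normltom{\div\eps E} + \norm{E}_{\lt(\Om_{2\da})}\bigr) + C\,\da^{-\ka}\,\normltmoom{E}.
\]
Since $\ka > 0$, choosing $\da$ so large that $C\da^{-\ka} \leq 1/2$ permits absorption of the last term on the left-hand side, and the claimed estimate follows with this $\da$ (after renaming $2\da$ by $\da$). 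The delicate step is the bookkeeping: one must verify that after substituting the bound on $\norm{\nabla\Phi}_{\ltom}$ the coefficient of $\normltmoom{E}$ on the right-hand side really is $O(\da^{-\ka})$ rather than $O(1)$, which is exactly what the decay of both $\hat\eps$ and $\p_j\hat\eps$ guarantees.
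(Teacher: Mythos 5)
Your proof is correct and follows essentially the same route as the paper's: cut off near infinity, apply the whole-space estimate \eqref{equ:stp_div-rot-est_smooth} to the cut-off field (which lies in $\homog(\Om)$ by Lemma \ref{lem:stp_reg-res}), and control the discrepancy between $\div\Phi$ and $\div(\eps\Phi)$ through the decay of $\hat\eps$ and $\p_{j}\hat\eps$. The only organizational difference is that the paper fixes the cutoff radius at $\rtil$ and extracts the small absorbable term by splitting the resulting weighted norm $\norm{E}_{\ltmomkaom}$ at radius $\da$ only at the very end, whereas you make the cutoff radius $\da$ itself the large parameter so that the factor $\da^{-\ka}$ appears directly in the commutator estimate; both versions hinge on the same absorption argument and the same two ingredients.
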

\begin{proof}
By assumption $\eps$ is of the form $\eps=\eps_{0}\cdot\mathbbm{1}+\hat{\eps}$ 
with $\eps_{0}\in\reals_{+}$ and there exists $\rtil>\rhat$ such that
\begin{align}\label{equ:stp_trans-ass}
	\hat{\eps}\in\co(\cU_{\rtil})
	\quad\text{with}\quad
	\hat{\eps}=\calO\big(r^{-\ka}\big)\,,\;\,	
	\p_{j}\hat{\eps}=\calO\big(r^{-1-\ka}\big)
	\,\;\text{ as }\;\,
	r\To\infty\,,\qquad (\,j=1,2,3\,)\,.
\end{align}
Using the cut-off function from above, we define $\tilde{E}:=\eta_{\rtil} E$ and 
as Lemma \ref{lem:stp_reg-res} yields $E\in\homo(\supp\eta_{\rtil})$, we have 
$\tilde{E}\in\homog(\Om)$. Hence by \eqref{equ:stp_div-rot-est_smooth}
\begin{align*}
		\|\,\tilde{E}\,\|_{\ltmoom}
		&\leq c\,\Big(\|\rot\tilde{E}\,\|_{\lt(\cU_{\rtil}) }
		     +\|\div\eps_{0}\tilde{E}\,\|_{\lt(\supp\eta_{\rtil})}\Big)\\
		&\leq c\,\Big(\|\rot\tilde{E}\,\|_{\lt(\cU_{\rtil})}
		     +\|\div\eps\tilde{E}\,\|_{\lt(\supp\eta_{\rtil})}
		     +\|\div\hat{\eps}\tilde{E}\,\|_{\lt(\supp\eta_{\rtil})}\Big)\\ 
		&\leq c\,\Big(\|\rot\tilde{E}\,\|_{\lt(\cU_{\rtil})}
		     +\|\div\eps\tilde{E}\,\|_{\lt(\cU_{\rtil})}
		     +\sum_{j=1,2,3}\|\hspace*{0.02cm}
		      (\partial_{j}\hat{\eps})\tilde{E}\,\|_{\lt(\supp\eta_{\rtil})}
		     +\|\,\hat{\eps}:\nabla\tilde{E}\,\|_{\lt(\supp\eta_{\rtil})}\Big)\,.      
\end{align*}
With \eqref{equ:stp_trans-ass} and the regularity estimate from 
Lemma \ref{lem:stp_reg-res} we obtain
\begin{align*}
	\|\,\tilde{E}\,\|_{\ltmoom}
		&\leq c\,\Big(\|\rot\tilde{E}\,\|_{\lt(\cU_{\rtil})}
		      +\|\div\eps\tilde{E}\,\|_{\lt(\cU_{\rtil})}
		      +\|\,\tilde{E}\,\|_{\homomka(\supp\eta_{\rtil})}\Big)\\
		&\leq c\,\Big(\|\rot\tilde{E}\,\|_{\lt(\cU_{\rtil})}
		      +\|\div\eps\tilde{E}\,\|_{\lt(\cU_{\rtil})}
		      +\|\,\tilde{E}\,\|_{\ltmomka(\cU_{\rtil})}\Big)\\
		&\leq c\,\Big(\|\rot\tilde{E}\,\|_{\ltom}
		      +\|\div\eps\tilde{E}\,\|_{\ltom}
		      +\|\,\tilde{E}\,\|_{\ltmomkaom}\Big)\,, 	
\end{align*}
such that by
\begin{align*} 
	\rot\tilde{E}=\eta_{\rtil}\rot E+(\nabla\eta_{\rtil})\times E
	\,,\qquad\qquad
	\div\eps\tilde{E}=\eta_{\rtil}\div\eps E+(\nabla\eta_{\rtil})\cdot\eps E\,,
\end{align*}
we end up with
\begin{align*}
	\normltmoom{E}
		&\leq c\,\Big(\|\,\tilde{E}\,\|_{\ltmoom}
		+\|\,E\,\|_{\lt(\Om_{2\rtil})}\Big)\\
		&\leq c\,\Big(\|\rot\tilde{E}\,\|_{\ltom}
		      +\|\div\eps\tilde{E}\,\|_{\ltom}
		      +\|\,\tilde{E}\,\|_{\ltmomkaom}
		      +\|\,E\,\|_{\lt(\Om_{2\rtil})}\Big)\\
		&\leq c\,\Big(\|\rot E\,\|_{\ltom}
		      +\|\div\eps E\,\|_{\ltom}
		      +\|\,E\,\|_{\ltmomkaom}\Big)\,.
\end{align*}
Finally, as $\ka>0$, the assertion follows by 
\begin{align*}
	\|\,E\,\|_{\ltmomkaom}^2
		&=\norm{E}_{\ltmomka(\Om_{\da})}^2
		  +\norm{E}_{\ltmomka(\cU_{\da})}^2
		 \leq\norm{E}_{\lt(\Om_{\da})}^2
		  +(1+\da^{2})^{-\ka}
		  \cdot\norm{E}_{\ltmoom}^2
\end{align*}
choosing $\da>\rhat$ big enough.
\end{proof}
Now, analogously to the proof of Lemma \ref{lem:stp_poincare}, we use 
Theorem \ref{thm:stp_WLST} to eliminate the extra term on the right hand side. But 
unlike there, here the kernels of the involved operators 
``\hspace*{0.02cm}$\rot$\hspace*{0.02cm}'' 
and ``\hspace*{0.02cm}$\div\eps$\hspace*{0.02cm}'' are not necessarily trivial. 
Therefore, we aim for a weighted version of the Maxwell 
estimate excluding the \emph{Dirichlet-Neumann fields}
\begin{align*}
	\vharmmotn{\eps}(\Om)=\zrmotom\cap\eps^{-1}\zdmonom\,.
\end{align*}
Fortunately, the space $\vharmmotn{\eps}(\Om)$ is only finite dimensional.
\begin{lem}\label{lem:stp_toolbox-reproduced}
	Let $\eps$ be 
	$\ka-\co-\,$decaying with $\ka>0$. Then: 
	\begin{enumerate}[leftmargin=0.9cm,label=$(\roman*)$] 
		\item $\mathrm{(}$\,\emph{Maxwell estimate}\,$\mathrm{)}$ There is $c>0$ 
			  s.\,t. for all 
			  $E\in\rmotom\cap\eps^{-1}\dmonom
			    \cap\vharmmotn{\eps}(\Om)^{\perp_{-1,\eps}}$
			  \begin{align*}
				\normltmoom{E}
					\leq c\cdot\Big(\normltom{\rot E}
					      +\normltom{\div\eps E}\Big)\,.\\[-10pt]	
			  \end{align*}
		\item $\mathrm{(}$\,\emph{Finite dimensional kernel}\,$\mathrm{)}$
			  The unit ball in $\vharmmotn{\eps}(\Om)$ is compact, i.e., 
			  \[\dim\vharmmotn{\eps}(\Om)<\infty\,.\]
		\item $\mathrm{(}$\,\emph{Closed ranges}\,$\mathrm{)}$ The ranges of 
			  $\operatorname{grad}_{\Gat}$, 
			  ${\rot_{\Gat}}${\;and\;}\;$\div_{\Gan}\eps$
			  are \textbf{not} closed, but it holds
			  \begin{align*}
				\;(a)\;\; &\ovl{\nabla\Hoztom}
		 		    		   =\ovl{\nabla\homotom}=\nabla\homotom\,,\\[6pt]
				\;(b)\;\; &\ovl{\rot\Rztom}=\ovl{\rot\rmotom}\\
				   &\phantom{\ovl{\rot\Rztom}}=\rot\rmotom
				    =\rot\Big(\,\rmotom\cap\eps^{-1}\zdmonom
				     \cap\vharmmotn{\eps}(\Om)^{\perp_{-1,\eps}}\Big)\,,\\[6pt]
				\;(c)\;\; &\ovl{\div\Dznom}=\ovl{\div\dmonom}\\
				   &\phantom{\ovl{\div\Dznom}}=\div\dmonom
				    =\div\Big(\dmonom\cap\eps\,\zrmotom
				     \cap\eps\,\vharmmotn{\eps}(\Om)^{\perp_{-1,\eps}}\Big)\,.
			  \end{align*}
	\end{enumerate}
\end{lem}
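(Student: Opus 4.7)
The three assertions will be established in the order $(ii)$, $(i)$, $(iii)$, mirroring the bounded Lipschitz argument of Lemma~\ref{lem:stp_fa-toolbox}. The two inputs driving the proof are Weck's local selection theorem (Theorem~\ref{thm:stp_WLST}), which supplies compactness on bounded subdomains, and the localized Maxwell estimate of Lemma~\ref{lem:stp_max-est_mud}, which controls the behavior at infinity.

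For $(ii)$ I take a sequence $(E_n)\subset\vharmmotn{\eps}(\Om)$ bounded in $\ltmoom$. Since $\rot E_n=0$ and $\div\eps E_n=0$, the sequence is automatically bounded in the graph space $\rmotom\cap\eps^{-1}\dmonom$, so Theorem~\ref{thm:stp_WLST} yields a subsequence converging in $\lt(\Om_\da)$, and Lemma~\ref{lem:stp_max-est_mud} applied to differences promotes this to Cauchyness in $\ltmoom$. Hence the unit ball of $\vharmmotn{\eps}(\Om)$ is compact. For $(i)$ I argue by contradiction: a sequence $(E_n)$ in the indicated subspace with $\normltmoom{E_n}=1$ and $\rot E_n,\div\eps E_n\to 0$ in $\ltom$ is, by Lemma~\ref{lem:stp_max-est_mud}, bounded in $\rmotom\cap\eps^{-1}\dmonom$; Theorem~\ref{thm:stp_WLST} extracts an $\lt(\Om_\da)$-convergent subsequence, and a second application of Lemma~\ref{lem:stp_max-est_mud} to differences makes it Cauchy in $\ltmoom$. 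The limit lies in $\vharmmotn{\eps}(\Om)\cap\vharmmotn{\eps}(\Om)^{\perp_{-1,\eps}}$ with unit $\ltmoom$-norm, a contradiction.

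For $(iii)(a)$, the Poincar\'e inequality of Lemma~\ref{lem:stp_poincare} shows that $\map{\nabla}{\homotom}{\ltom}$ is bounded below, whence $\nabla\homotom$ is closed. The equalities $\ovl{\nabla\Hoztom}=\ovl{\nabla\homotom}=\nabla\homotom$ follow from the fact that, by definition, $\homotom$ is the closure of $\cictom\subseteq\Hoztom$ with respect to the $\homo$-norm, which in particular forces $\ltom$-convergence of the gradients of the approximating test functions. For parts $(b)$ and $(c)$ I intend to use part $(a)$ together with Step~1 to construct, via the projection theorem in a suitably weighted Hilbert space, a correction of an arbitrary preimage $E\in\rmotom$ (respectively $E\in\eps^{-1}\dmonom$) to a field $\tilde{E}$ lying in the restricted subspace of part $(i)$ while preserving $\rot\tilde{E}=\rot E$ (respectively $\div\eps\tilde{E}=\div\eps E$). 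The Maxwell estimate $(i)$ then renders the reconstruction continuous, yielding closedness of the range together with the claimed representation by a restricted preimage. The identifications $\ovl{\rot\Rztom}=\ovl{\rot\rmotom}$ and $\ovl{\div\Dznom}=\ovl{\div\dmonom}$ finally follow from the density of $\cictom$ in $\rmotom$ and $\dmonom$ built into the very definition of these weighted weak spaces, combined with $\cictom\subseteq\Rztom\cap\Dznom$.

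\textbf{Principal difficulty.} The most delicate point is the construction in $(iii)(b)$--$(c)$ of a correction $E\rightsquigarrow\tilde{E}$ taking values in the restricted subspace: reconciling the weighting $\rho^{-2}$ present in the inner product of $\ltmoeps(\Om)$ with the mixed $\Gat/\Gan$-boundary conditions, and checking that the resulting projection preserves membership in $\rmotom$ respectively $\eps^{-1}\dmonom$, requires a careful functional-analytic bookkeeping in which $(b)$ and $(c)$ must be untangled simultaneously to avoid circularity with the closed-range statement being proved.
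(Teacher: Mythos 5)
Your plan coincides with the paper's proof: $(ii)$ and $(i)$ are obtained exactly as you describe from Weck's local selection theorem plus Lemma~\ref{lem:stp_max-est_mud}, $(iii)(a)$ is the Poincar\'e estimate combined with the weak characterization \eqref{equ:prel_weak=strong_2}, and the correction $E\rightsquigarrow\tilde E$ you outline for $(b)$ and $(c)$ is precisely the paper's two-stage splitting --- first the Helmholtz decomposition $\nabla\homotom\oplus_{\eps}\eps^{-1}\zdznom$ from \eqref{equ:stp_decomp} and part $(a)$, then the finite-dimensional $\perp_{-1,\eps}$-projection off $\vharmmotn{\eps}(\Om)$ --- followed by the Maxwell estimate $(i)$ to get a Cauchy sequence in $\ltmoom$. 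The ``principal difficulty'' you flag is unproblematic, since $\vharmmotn{\eps}(\Om)=\zrmotom\cap\eps^{-1}\zdmonom$ already carries the boundary conditions and annihilates both $\rot$ and $\div\eps$, so subtracting its members preserves membership in the relevant spaces and the rotation/divergence of the preimage.
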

\begin{proof}
	Statement $(ii)$ just follows by Weck's local selection theorem 
	and Lemma \ref{lem:stp_max-est_mud}. For $(i)$ suppose the estimate 
	is wrong, i.e., there exists $(E_{n})_{n\in\N}\subset\rmotom\cap
	\eps^{-1}\dmonom\cap\vharmmotn{\eps}(\Om)^{\perp_{-1,\eps}}$ with\\[-8pt]
	\begin{align}\label{equ:stp_max-est_ass}
		\normltmoom{E_{n}}=1
		\qquad\text{and}\qquad
		\normltom{\rot E_{n}}+
		\normltom{\div\eps E_{n}}
		\xrightarrow{\;n\to\infty\;} 0\,.
	\end{align}
	Then the sequence $(E_{n})_{n\in\N}$ is bounded in 
	$\rmotom\cap\eps^{-1}\dmonom$ and Weck's local selection theorem provides a 
	subsequence $(E_{\pi(n)})_{n\in\N}$ converging in $\ltlocomb$. By 
	Lemma \ref{lem:stp_max-est_mud} the sequence 
	$(E_{\pi(n)})_{n\in\N}$ is an $\ltmo-$Cauchy-sequence and 
	we obtain 
	\begin{align*}
		E:=\lim_{n\to\infty}E_{\pi(n)}\in\rmotom\cap\eps^{-1}\dmonom
		\qquad\text{with}\qquad	
		\rot E=0
		\qquad\text{resp.}\;\quad
		\div\eps E=0\,.	
	\end{align*}
	Additionally, 
	$(E_{\pi(n)})_{n\in\N}\subset(E_{n})_{n\in\N}\subset\vharmmotn{\eps}
	(\Om)^{\perp_{-1,\eps}}$ such that
	\begin{align*}
		\forall\;H\in\vharmmotn{\eps}(\Om):
		\qquad
		\scp{E}{H}_{\ltmoeps(\Om)}
		 	=\lim_{n\to\infty}\scp{E_{\pi(n)}}{H}_{\ltmoeps(\Om)}=0\,.
	\end{align*}
	hence 
	\begin{align*}
		E\in\vharmmotn{\eps}(\Om)
		    \cap
		    \vharmmotn{\eps}(\Om)^{\perp_{-1,\eps}}=\{0\}\,,	
	\end{align*}
	a contradiction. Let us finally turn to statement $(iii)$. By 
	definition we clearly have\\[-6pt]
	\begin{align}\label{equ:stp_closures}
			\hspace*{-0.5cm}\ovl{\nabla\Hoztom}=\ovl{\nabla\homotom}\,,\qquad
			\ovl{\rot\Rztom}=\ovl{\rot\rmotom}\,,\qquad
			\ovl{\div\Dznom}=\ovl{\div\dmonom}\,.\\[-12pt]\notag
	\end{align}
	Now, for $u^{\nabla}\in \ovl{\nabla\homotom}$ there exists 
	$(u_{n})_{n\in\N}\subset\homotom$ with 
	$\nabla u_{n}\xrightarrow{n\to\infty} u^{\nabla}$ in 
	$\ltom$. The Poincar\'e estimate, Lemma \ref{lem:stp_poincare}, shows 
	that $(u_{n})_{n\in\N}$ is converging in $\ltmoom$ to some $u\in\ltmoom$ 
	and we have
	\begin{align*}
		\scp{u}{\div\Phi}_{\ltom}
			=\lim_{n\to\infty}\scp{u_{n}}{\div\Phi}_{\ltom}
			=-\lim_{n\to\infty}\scp{\nabla u_{n}}{\Phi}_{\ltom}
			=-\scp{u^{\nabla}\hspace*{-0.05cm}}{\Phi}_{\ltom}
		\quad\forall\;\Phi\in\cicnom\,.	
	\end{align*}
	Thus, by \eqref{equ:prel_weak=strong_2} we have $u\in\homotom$ 
	and $\nabla u=u^{\nabla}$, which shows $(a)$. For $(b)$ let 
	$E^{\rot}\in\ovl{\rot\rmotom}$ and  
	$(E_{n})_{n\in\N}\subset\Rztom$ a sequence with 
	$\rot E_{n}\xrightarrow{n\to\infty} E^{\rot}$ in $\ltom$. 
	Using the decompositions from \eqref{equ:stp_decomp} and statement $(a)$, we 
	obtain $E_{n}=E^{\nabla}_{n}+\hat{E}_{n}
	\in\nabla\homotom\oplus_{\eps}\eps^{-1}\,\zdznom$, hence 
	\begin{align*}
		\hat{E}_{n}=E_{n}-E^{\nabla}_{n}
		\in\Rztom\cap\eps^{-1}\zdznom
		\quad\text{with}\quad
		\rot \hat{E}_{n}=\rot E_{n}\xrightarrow{\,\ltom\,} E^{\rot}\,.
	\end{align*}
	As $\hat{E}_{n}\subset\ltom\subset\ltmoom$ and 
	$\vharmmotn{\eps}(\Om)\subset\ltmoom$ is finite-dimensional, we 
	continue splitting
	\begin{align*}
		\hat{E}_{n}=E^{\mathscr{H}}_{n}+\tilde{E}_{n}\hspace*{-0.1cm}
				   \in\vharmmotn{\eps}(\Om)
				   \oplus_{-1,\eps}
				   \vharmmotn{\eps}(\Om)^{\perp_{-1,\eps}}\,,	
	\end{align*}
	and end up with 
	\begin{align*}
		\tilde{E}_{n}=\hat{E}_{n}-E^{\mathscr{H}}_{n}
		\in\rmotom
		\cap\eps^{-1}\zdmonom
		\cap\vharmmotn{\eps}(\Om)^{\perp_{-1,\eps}}
		\,,\quad\quad	
		\rot\tilde{E}_{n}=\rot\hat{E}_{n}\xrightarrow{\,\ltom\,}E^{\rot}\,.
	\end{align*}
	Now the weighted Maxwell estimate from $(i)$ shows that  
	$(\tilde{E}_{n})_{n\in\N}$ is a Cauchy sequence in $\ltmoom$, hence 
	converging to some $\tilde{E}\in\ltmoom$. In addition we have
	\begin{itemize}[leftmargin=0.6cm, itemsep=6pt]
		\item $\dsp\forall\;\Phi\in\cicnom$:\quad
			  $\dsp\scp{\tilde{E}}{\rot \Phi}_{\ltom}
			   \xleftarrow{n\to\infty}
			   \scp{\tilde{E}_{n}}{\rot\Phi}_{\ltom}
			    =\scp{\rot\tilde{E}_{n}}{\Phi}_{\ltom}
			   \xrightarrow{n\to\infty}\scp{E^{\rot}}{\Phi}_{\ltom}\,,$	
		\item $\dsp\forall\;\phi\in\cictom$:\quad
			  $\dsp\scp{\eps \tilde{E}}{\nabla \phi}_{\ltom}
			   \xleftarrow{n\to\infty}
			   \scp{\eps\tilde{E}_{n}}{\nabla\phi}_{\ltom}
			    =-\scp{\div\eps\tilde{E}_{n}}{\phi}_{\ltom}=0\,,$		
		\item $\dsp\forall\;H\in\vharmmotn{\eps}(\Om)$:\quad
			  $\dsp\scp{\tilde{E}}{H}_{\ltmoeps(\Om)}
			   \xleftarrow{n\to\infty}
			   \scp{\tilde{E}_{n}}{H}_{\ltmoeps(\Om)}=0\,,	$
	\end{itemize}
	such that by \eqref{equ:prel_weak=strong_2}
	\begin{align*}
		\tilde{E}
		\in\rmotom
		\cap\eps^{-1}\zdmonom
		\cap\vharmmotn{\eps}(\Om)^{\perp_{-1,\eps}}
		\quad\text{with}\quad
		\rot\tilde{E}=E^{\rot}
	\end{align*}
	and $(b)$ is proven. The last assertion $(c)$ follows by similar 
	arguments.
\end{proof}
\begin{rem}\label{rem:stp_toolbox-reproduced}
	Under the assumptions of Lemma \ref{lem:stp_toolbox-reproduced} we have
	in particular
	\begin{align*}
		\ltom=\ovl{\calR(\div_{\Gan}\eps)}
		      \oplus_{\eps}
		      \calN(\operatorname{\grad}_{\Gat})
		     =\div\dmonom\oplus_{\eps}\{0\}
		     =\div\dmonom
	\end{align*}
 	and by \eqref{equ:stp_decomp} the following Helmholtz type decompositions 
 	hold true:
	\begin{alignat*}{2}
		\ltom &=\nabla\homotom
			\oplus_{\eps}\eps^{-1}\zdznom\,,
		\quad\qquad &
		\ltom &=\eps^{-1}\rot\rmotom
			\oplus_{\eps}\zrznom\,,\\
		\zrztom &=\nabla\homotom
			\oplus_{\eps}\vHarmtn{\eps}(\Om)\,,
		\quad\qquad &
			\eps^{-1}\zdztom &=\eps^{-1}\rot\rmotom
			\oplus_{\eps}\vHarmnt{\eps}(\Om)\,.\\
	\end{alignat*}	
\end{rem}
%
\subsection{Dirichlet-Neumann Fields in Exterior Domains} %
%
As noted before, to solve 
\eqref{equ:stp_problem_es}\;resp.\;\eqref{equ:stp_problem_ms} with Hilbert space 
methods we have to deal with 
$\vharmmotn{\eps}(\Om)$\;\,resp.\;$\vharmmont{\mu}(\Om)$. 
Therefore, a more thorough investigation of these fields is needed. \\[12pt]
From the literature, it is well known, that the existence of 
Dirichlet-Neumann fields is strongly related to the topological properties of the 
domain $\Om$. For example, as shown in \cite{picard_boundary_1982} (\,see also 
\cite{milani_decomposition_1988}\,) in the limit cases 
$\Gat=\Ga$\;resp.\;$\Gat=\emptyset$ the dimension of 
$\vHarmtn{}(\Om)=\vHarmtn{\mathbbm{1}}(\Om)$ is essentially given by the number of 
connected components of the boundary $\Ga$\;resp.\;the number of handles of $\Om$. 
In addition, as in \cite[Lemma 3.8]{pauly_generalized_2009} we obtain 
for $\ga$ $\ka-\co-$decaying with $\ka>0$
\begin{align}\label{equ:stp_dir-neu-fields_equal}
	\harmgen{\ga}{-\frac{3}{2}}{\Gat}{\Gan}(\Om)
		=\vHarmtn{\ga}(\Om)
		=\harmgen{\ga}{<\frac{1}{2}}{\Gat}{\Gan}(\Om)\,,	
\end{align}
and an easy application of the Helmholtz decompositions \eqref{equ:stp_decomp} 
shows that the dimension of the Dirichlet-Neumann fields $\vHarmtn{\ga}(\Om)$ 
does not depend on the transformation $\ga$, i.\,e.,
\begin{align}\label{equ:dnf_transf-ind}
	d_{1,2}:=\dim\vHarmtn{}(\Om)=\dim\vHarmtn{\ga}(\Om)
	=\dim\vharmmotn{\ga}(\Om)<\infty\,.
\end{align}

\begin{figure}[h]
\captionsetup{width=0.7\textwidth}
\centering
\begin{tikzpicture}
	\tikzset{
  		ring shading/.code args={from #1 at #2 to #3 at #4}{
    	\def\colin{#1}
    	\def\radin{#2}
    	\def\colout{#3}
    	\def\radout{#4}
    	\pgfmathsetmacro{\proportion}{\radin/\radout}
    	\pgfmathsetmacro{\outer}{0.8818cm}
    	\pgfmathsetmacro{\inner}{0.8818cm*\proportion}
    	\pgfmathsetmacro{\innerlow}{\inner-0.01pt}
    	\pgfdeclareradialshading{ring}{\pgfpoint{0cm}{0cm}}%
    	{
    	  color(0pt)=(white);
    	  color(\innerlow)=(white);
    	  color(\inner)=(#1);
    	  color(\outer)=(#3)
    	}
    	\pgfkeysalso{/tikz/shading=ring}
  		},
	}
	\begin{scope} 
	    \shade[even odd rule, ring shading={from lightgray at 3.5 to white at 4.495}]
  		(0,0) circle (4.5cm) circle (3.59cm);
		\fill[white] (0,0) circle (3.599cm);
		\fill[lightgray,opacity=0.90] (0,0) circle (3.6cm);
		\draw[dashed,line width=1pt] (0,0) circle (3cm);
		\filldraw[white] (0,0)--(1,0)--(1.5,0.5)
		                     --(1.5,0.5) arc(5:29:2)--(1.24,1.32)
		                     --(0.8,1)--(0.5,1.8)
		                     --(0.505,1.8) arc(270:249.76:2)--(-0.19,1.92)
		                     --(-0.65,1.405)--(-0.65,1.405) arc (40:10:3);
		\draw[line width=1pt](-0.017,0.01)--(1.01,0.01)-- (1.52,0.52);
		\draw[line width=2pt] (1.496,0.5) arc(5:30:2);
		\draw[line width=2pt] (1.265,1.34)--(0.8,1.015)--(0.474,1.822);
		\draw[line width=2pt] (0.505,1.8) arc(270:249.7:2);
		\draw[line width=1pt] (-0.668,1.396)--(-0.165,1.945);
		\draw[line width=1pt] (0,0) arc (10:40.2:3);
		\filldraw[lightgray] (0.2,1.2) circle (0.22cm);
		\draw[line width=1pt] (0.2,1.2) circle (0.22cm);
		\filldraw[white] (-1.95,-1.14) arc (-12.8:10:2.75)
							 --(-1.91,0) arc (80:57.2:2.7)
							 --(-0.5,-1)--(-0.85,-0.43)
							 --(-0.5,-1) arc(110:79:1.5)
							 --(0.3,-0.94)--(0.7,-2)--(0,-1.6)
							 --(-0.5,-1.8)--(-1,-1.4)--(-1.5,-1.8)
							 --(-1.96,-1.1);
		\draw[line width=2pt] (-1.95,-0.01) arc (80:57.2:3);
		\draw[line width=2pt] (-1.921,-0.038) arc (10:-12.8:2.75);
		\draw[line width=2pt] (-0.499,-1.02)--(-0.86,-0.43);
		\draw[line width=1pt] (-0.51,-1) arc(110.5:79:1.5);
		\draw[line width=1pt] (0.288,-0.922)--(0.7,-2)--(0,-1.6)--
							  (-0.5,-1.8)--(-1.02,-1.4)--(-1.485,-1.78);
		\draw[line width=2pt] (-1.5,-1.81)--(-1.952,-1.099);
		\filldraw[lightgray] (-1.2,-0.9) circle (0.22cm);
		\draw[line width=1pt] (-1.2,-0.9) circle (0.22cm);
		\filldraw[lightgray] (-0.2,-1.3) circle (0.12cm);
		\draw[line width=2pt] (-0.2,-1.3) circle (0.12cm);
		\filldraw[white] (1.4,-0.635) arc (110:70:0.9)
							 --(2.06,-0.64) arc (160:200:0.9)
							 --(1.97,-1.25) arc (288:255:0.9);
							 --(1.4,-0.6) arc (20:-20:0.9);	
		\draw[line width=2pt] (1.4,-0.635) arc (110:70:1);
		\draw[line width=2pt] (1.4,-0.6) arc (20.1:-20.1:1);
		\draw[line width=2pt] (1.4,-1.25) arc (250:290:1);
		\draw[line width=2pt] (2.09,-0.6) arc (160:200:1);
		\draw (-1.5,1) node {$\Omrhat$};
		\draw (0.6,0.6) node {$\rthree\sm\Om$};
		\draw (2.8,-1.9) node {$\Sp_{\rhat}$};
		\draw[lightgray] (-3.2,-2.6) node {$\Om$};
		\draw (0.9,1.8) node {$\Gat$};
		\draw (-2.2,-0.6) node {$\Gat$};
		\draw (1.2,-0.9) node {$\Gat$};
		\draw (0.1,-1.95) node {$\Gan$};
		\draw (-0.3,0) node {$\Gan$};
	\end{scope}
\end{tikzpicture}
\caption{$\rthree\sm\Om$ surrounded by the boundary parts $\Gat$ (\,thick 
black lines\,) and $\Gan$ (\,thin black lines\,) as well as the artificial 
boundary sphere $\Sp_{\rhat}$ (\,dashed line\,).}
\end{figure}
As a crucial technical trick we will show that there exists a finite set of 
compactly supported vector fields $\BGatom$, whose projections form a basis of 
$\vHarmtn{\ga}(\Om)$. The underlying idea is, that $\Omrhat$ and $\Om$ have 
essentially the same topological properties. Hence, choosing a basis of 
$\vHarmv{\ga}{\Ga_{1,\rhat}}{\Gan}(\Omrhat)$, 
extending their elements by zero to $\Om$ and projecting them onto 
$\vHarmtn{\ga}(\Om)$, 
we obtain a basis of $\vHarmtn{\ga}(\Om)$. Moreover, the extensions by zero define 
exactly the set $\BGatom$, which will also serve as a set of linear functionals 
ensuring uniqueness of static solutions.
\begin{theo}\label{thm:stp_dir-neu-char}
	There exist a finite set
	\begin{align*}
		\BGatom=\{\scrB_{1,1},\scrB_{1,2},\ldots,
				  \scrB_{1,d_{1\hspace*{-0.025cm},\hspace*{-0.02cm}2}}\}
		\subset\zrztom
		\qquad\text{with}\qquad
		\vHarmtn{\ga}(\Om)\cap\BGatom^{\perp_{\ga}}=\{0\}\,.
	\end{align*}
	In addition, the elements of $\BGatom$ have compact support and their 
	projections {\rm (}\,in $\ltgaom$\,{\rm)} along $\ovl{\nabla\Hoztom}$ form a 
	basis of the Dirichlet-Neumann fields $\vHarmtn{\ga}(\Om)$.
\end{theo}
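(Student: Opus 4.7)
The plan is to reduce everything to the bounded truncation $\Omrhat$ with the adjusted boundary splitting $\Ga_{1,\rhat}:=\Gat\cup\Sp_{\rhat}$ versus $\Gan$, where Weck's selection theorem (via Theorem \ref{thm:stp_WLST} and Lemma \ref{lem:stp_fa-toolbox}) is available. First I would choose a basis $b_{1},\ldots,b_{N}$ of the finite dimensional space $\vHarmv{\ga}{\Ga_{1,\rhat}}{\Gan}(\Omrhat)$ and define $\scrB_{1,k}$ as the extension of $b_{k}$ by zero to $\Om$. Since $b_{k}$ has vanishing tangential trace on $\Sp_{\rhat}$, a direct check against test fields $\Phi\in\cicnom$ via \eqref{equ:prel_weak=strong_1} shows that the weak rotation of $\scrB_{1,k}$ does not pick up a singular contribution across $\Sp_{\rhat}$, so $\scrB_{1,k}\in\zrztom$ with support in $\ovl{\Omrhat}$.

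Second, I would use the Helmholtz decomposition $\zrztom=\ovl{\nabla\Hoztom}\oplus_{\ga}\vHarmtn{\ga}(\Om)$ from Remark \ref{rem:stp_toolbox-reproduced} to project each $\scrB_{1,k}$ to $\pi\scrB_{1,k}\in\vHarmtn{\ga}(\Om)$. For linear independence, assume $\sum c_{k}\scrB_{1,k}\in\ovl{\nabla\Hoztom}$, approximated by $\nabla u_{n}$ with $u_{n}\in\Hoztom$. Since $\nabla u_{n}\to0$ in $\lt(\cU_{\rhat})$ and $\cU_{\rhat}$ is connected with $\Gan\cap\cU_{\rhat}=\emptyset$, after normalising by suitable constants the Poincar\'e estimate from Lemma \ref{lem:stp_poincare} forces $u_{n}$ to converge to a constant on $\cU_{\rhat}$, which must be zero by the $\Gat$-boundary condition. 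Restricting the limit to $\Omrhat$ then yields $\sum c_{k}b_{k}\in\nabla\Hgen{}{1}{\Ga_{1,\rhat}}(\Omrhat)$, and the bounded-domain decomposition (Lemma \ref{lem:stp_fa-toolbox}\,(iii)) together with $b_{k}\in\vHarmv{\ga}{\Ga_{1,\rhat}}{\Gan}(\Omrhat)$ gives $\sum c_{k}b_{k}=0$ and hence $c_{k}=0$.

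Third, I would establish that the projections span, i.e.\ $N\geq d_{1,2}$, so that combined with step two $N=d_{1,2}$ and the projections form a basis. Given $h\in\vHarmtn{\ga}(\Om)$, the restriction $h|_{\Omrhat}$ lies in $\vHarmv{\ga}{\Gat}{\Gan}(\Omrhat)$ but is missing the tangential condition on $\Sp_{\rhat}$. The idea is to subtract a gradient $\nabla\phi$ with $\phi\in\Hgen{}{1}{\Gat}(\Omrhat)$ chosen so that $h|_{\Omrhat}-\nabla\phi$ acquires vanishing tangential trace on $\Sp_{\rhat}$ while preserving the other constraints, landing it in $\vHarmv{\ga}{\Ga_{1,\rhat}}{\Gan}(\Omrhat)$; expanding in the basis $\{b_{k}\}$ then produces the required coefficients. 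The equivalent formulation $\vHarmtn{\ga}(\Om)\cap\BGatom^{\perp_{\ga}}=\{0\}$ is immediate afterwards, because $h\perp_{\ga}\scrB_{1,k}$ implies $h\perp_{\ga}\pi\scrB_{1,k}$ (the $\ovl{\nabla\Hoztom}$-part being $\ga$-orthogonal to $h\in\vHarmtn{\ga}(\Om)$), which forces $h=0$ by linear independence of the basis.

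The main obstacle is the spanning step: constructing the potential $\phi$ requires solving a mixed Neumann/Dirichlet problem on $\Omrhat$ whose compatibility conditions match the Dirichlet-Neumann data of $h$ on $\Sp_{\rhat}$. I would tackle this by exploiting the bounded-domain Helmholtz decompositions on $\Omrhat$ under the alternative splitting ($\Sp_{\rhat}$ regarded as part of the normal side) and invoking the topological/cohomological fact that the dimension of the Dirichlet-Neumann space depends only on the relative topology of $(\Om,\Gat)$, which together with the $\ga$-independence recorded in \eqref{equ:dnf_transf-ind} yields $\dim\vHarmv{\ga}{\Ga_{1,\rhat}}{\Gan}(\Omrhat)=d_{1,2}$.
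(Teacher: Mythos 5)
Your overall architecture mirrors the paper's: extend a basis of $\vHarmv{\ga}{\Ga_{1,\rhat}}{\Gan}(\Omrhat)$ by zero, project along $\ovl{\nabla\Hoztom}$, prove linear independence, prove spanning, and then read off $\vHarmtn{\ga}(\Om)\cap\BGatom^{\perp_{\ga}}=\{0\}$. Your first two steps are essentially sound (one small slip: the constant that $w$ assumes on $\cU_{\rhat}$ vanishes because $w\in\homotom$ is a weighted, hence decaying, function on the unbounded set $\cU_{\rhat}$ — not because of the boundary condition on $\Gat$, which lives on $\p\Om\subset\U_{\rhat}$; also you can avoid the approximating sequence entirely since Lemma \ref{lem:stp_toolbox-reproduced}\,(iii)(a) gives $\ovl{\nabla\Hoztom}=\nabla\homotom$). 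Your closing observation that orthogonality to $\BGatom$ implies orthogonality to the projections, hence $H=0$ once the basis property is known, is correct and in fact slightly slicker than the paper's Step 3.

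The genuine gap is the spanning step, which you yourself flag as the main obstacle but do not resolve. Invoking ``the dimension of the Dirichlet--Neumann space depends only on the relative topology of $(\Om,\Gat)$'' begs the question: the equality $\dim\vHarmv{\ga}{\Ga_{1,\rhat}}{\Gan}(\Omrhat)=\dim\vHarmtn{\ga}(\Om)$ is precisely what must be proved, and \eqref{equ:dnf_transf-ind} only records independence of the dimension from $\ga$ and the weight, not any topological invariance. Your alternative sketch (subtract $\nabla\phi$ to kill the tangential trace of $h|_{\Omrhat}$ on $\Sp_{\rhat}$) also runs into unaddressed technical issues: $h\in\vHarmtn{\ga}(\Om)$ has in general only an $\Hgen{}{-1/2}{}$-type tangential trace on $\Sp_{\rhat}$ since $\ga$ is merely $\li$ there (the $\co$-decay is only assumed outside some possibly larger ball $\cU_{\rtil}$), so the inhomogeneous mixed boundary value problem for $\phi$ and the preservation of $\div\ga(\cdot)=0$ and of the normal condition on $\Gan$ require care, and you would still have to prove injectivity of the resulting coefficient map $h\mapsto(\alpha_{k})$ to conclude $d_{1,2}\leq N$. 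The paper circumvents all of this by working with the \emph{other} decomposition $\zdzn(\Omrhat)=\rot\Rzn(\Omrhat)\oplus\vHarmv{}{\Ga_{1,\rhat}}{\Gan}(\Omrhat)$ and showing that $\pi\circ\scrR_{\Omrhat}$ is injective on $\vHarmtn{}(\Om)$: if the projection of $\scrR_{\Omrhat}(H)$ vanishes then $H=\rot E$ on $\Omrhat$, while on $\cU_{\rhat}$ the triviality of the Neumann fields of the exterior of a ball gives $H=\nabla w$ with $w\in\homo(\cU_{\rhat})$; a Stein extension of $w$ and one integration by parts then force $H=0$. You should replace your spanning step by an argument of this kind (or fully work out the trace, well-posedness, and injectivity issues in your own).
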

\noindent
\begin{proof}
	The proof is given in the Appendix.
\end{proof}
\noindent
Note that, as $\BGatom$ contains only compactly supported functions, we 
obviously have 
\begin{align}\label{equ:stp_dir-neu-gen_int}
   	  \forall\;s\in\reals:\qquad\qquad
	  	\ovl{\rot\rsmonom}^{\,\norm{\cdot}_{\ltsom}}
	  	\cup\,
	  	\ovl{\rot\Rsnom}^{\,\norm{\cdot}_{\ltsom}}
	  	&\subset\,\BGatom^{\perp}\,.
\end{align}
Therefore, $\BGatom$ allows for an alternative characterization for 
$\ovl{\calR(\rot_{\Gan})}$ and, in particular, we may generalize 
the weighted Maxwell estimate from Lemma \ref{lem:stp_toolbox-reproduced}.
\begin{lem}\label{lem:stp_max-est_gen}  
	Let $\eps$ be $\ka-\co-$decaying with order $\ka>0$ and 	$\BGatom$ be the
	finite set from Theorem \ref{thm:stp_dir-neu-char}.\\[-10pt]
	\begin{enumerate}[itemsep=2pt,label=$(\roman*)$]
		\item It holds
			  \begin{align*}
				  \eps^{-1}\zdznom\cap\BGatom^{\perp_{\eps}}
					&=\eps^{-1}\zdznom\cap\vHarmtn{\eps}(\Om)^{\perp_{\eps}}
					=\eps^{-1}\ovl{\rot\Rznom}\,.
		  	  \end{align*}
		\item There exists $c>0$ such that for all 
			  $E\in\rmotom\cap\eps^{-1}\dmonom$ it holds
		  	  \begin{align*}
		  		  \normltmoom{E}
					  &\leq c\,\Big(\,\normltom{\rot E}	
				 	  +\normltom{\div\eps E}
			     	  +\dsp\sum_{\ell=1,\ldots,
			     	   d_{1\hspace*{-0.025cm},\hspace*{-0.02cm}2}}
			       	   |\scp{E}{\scrB_{1,\ell}}_{\lteps(\Om)}|\;\Big)\,.
		 	  \end{align*} 
	\end{enumerate}
\end{lem}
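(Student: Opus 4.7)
For part $(i)$ I plan to exploit the Helmholtz-type decomposition
\[
\eps^{-1}\zdznom=\eps^{-1}\ovl{\rot\Rznom}\oplus_{\eps}\vHarmtn{\eps}(\Om)
\]
from Remark \ref{rem:stp_toolbox-reproduced} (or equivalently \eqref{equ:stp_decomp}) together with $\zrztom=\nabla\homotom\oplus_{\eps}\vHarmtn{\eps}(\Om)$. Since $\BGatom\subset\zrztom$, each basis vector decomposes as $\scrB_{1,\ell}=\nabla u_{\ell}+H_{\ell}$ with $u_{\ell}\in\homotom$ and $H_{\ell}\in\vHarmtn{\eps}(\Om)$; by Theorem \ref{thm:stp_dir-neu-char} the $H_{\ell}$ form a basis of $\vHarmtn{\eps}(\Om)$. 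For $E\in\eps^{-1}\zdznom$ the weak characterizations \eqref{equ:prel_weak=strong_1}/\eqref{equ:prel_weak=strong_2} (and density of $\cictom$ in $\homotom$) yield $\scp{\eps E}{\nabla u_{\ell}}_{\lt}=-\scp{\div\eps E}{u_{\ell}}_{\lt}=0$, so $\scp{E}{\scrB_{1,\ell}}_{\lteps}=\scp{E}{H_{\ell}}_{\lteps}$. This identifies the two orthogonality conditions, proving the first equality. The second equality then follows: $\supset$ is immediate from \eqref{equ:stp_dir-neu-gen_int} (using symmetry of $\eps$), while for $\subset$ I decompose $E\in\eps^{-1}\zdznom\cap\vHarmtn{\eps}(\Om)^{\perp_{\eps}}$ along the Helmholtz splitting and observe that the harmonic component is forced to vanish.

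For part $(ii)$ my plan is to orthogonally split $E$ in $\ltmoeps(\Om)$ along the finite-dimensional space $\vharmmotn{\eps}(\Om)$. Writing $E=E^{\perp}+H$ with $H\in\vharmmotn{\eps}(\Om)=\vHarmtn{\eps}(\Om)$ (the identification \eqref{equ:stp_dir-neu-fields_equal}), the relations $\rot H=0$ and $\div\eps H=0$ give $\rot E^{\perp}=\rot E$, $\div\eps E^{\perp}=\div\eps E$. Hence the Maxwell estimate from Lemma \ref{lem:stp_toolbox-reproduced}\,$(i)$ applied to $E^{\perp}$ yields
\[
\normltmoom{E^{\perp}}\le c\bigl(\normltom{\rot E}+\normltom{\div\eps E}\bigr).
\]

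It remains to control $\normltmoom{H}$. Since $\vHarmtn{\eps}(\Om)$ is finite-dimensional (cf. \eqref{equ:dnf_transf-ind}) and, by Theorem \ref{thm:stp_dir-neu-char}, the linear map $\vHarmtn{\eps}(\Om)\ni H\mapsto\bigl(\scp{H}{\scrB_{1,\ell}}_{\lteps}\bigr)_{\ell=1,\dots,d_{1,2}}\in\reals^{d_{1,2}}$ is injective (and hence an isomorphism), all norms are equivalent and
\[
\normltmoom{H}\le c\sum_{\ell}|\scp{H}{\scrB_{1,\ell}}_{\lteps}|.
\]
The main obstacle is that we want the resulting bound in terms of $\scp{E}{\scrB_{1,\ell}}_{\lteps}$, not $\scp{H}{\scrB_{1,\ell}}_{\lteps}$. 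Here the crucial property from Theorem \ref{thm:stp_dir-neu-char} that each $\scrB_{1,\ell}$ has \emph{compact} support comes into play: Cauchy--Schwarz on $\supp\scrB_{1,\ell}\subset\Om_{\sigma}$ gives
\[
|\scp{E^{\perp}}{\scrB_{1,\ell}}_{\lteps}|\le c_{\ell}\,\|E^{\perp}\|_{\lt(\Om_{\sigma})}\le c_{\ell}'\,\normltmoom{E^{\perp}},
\]
which by the previous estimate is controlled by $\normltom{\rot E}+\normltom{\div\eps E}$. Writing $\scp{H}{\scrB_{1,\ell}}_{\lteps}=\scp{E}{\scrB_{1,\ell}}_{\lteps}-\scp{E^{\perp}}{\scrB_{1,\ell}}_{\lteps}$ and combining $\normltmoom{E}\le\normltmoom{E^{\perp}}+\normltmoom{H}$ with the two bounds above finishes the proof.
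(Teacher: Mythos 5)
Your proof is correct, but it takes a genuinely different route from the paper's, most visibly in part $(ii)$. There the paper argues by contradiction: a normalized sequence violating the estimate is bounded in $\rmotom\cap\eps^{-1}\dmonom$, Weck's local selection theorem together with Lemma \ref{lem:stp_max-est_mud} produces an $\ltmoom$-limit lying in $\vharmmotn{\eps}(\Om)\cap\BGatom^{\perp_{\eps}}=\{0\}$, contradicting the normalization. You instead argue constructively: you split $E=E^{\perp}+H$ orthogonally in $\ltmoeps(\Om)$ along the finite-dimensional kernel, control $E^{\perp}$ by the already established Maxwell estimate of Lemma \ref{lem:stp_toolbox-reproduced}\,$(i)$, and control $H$ by equivalence of norms on $\vHarmtn{\eps}(\Om)$, which follows from the injectivity of $H\mapsto\bigl(\scp{H}{\scrB_{1,\ell}}_{\lteps(\Om)}\bigr)_{\ell}$ guaranteed by Theorem \ref{thm:stp_dir-neu-char}; the compact support of the $\scrB_{1,\ell}$ lets you trade $\scp{E^{\perp}}{\scrB_{1,\ell}}_{\lteps(\Om)}$ for a term already bounded. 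This buys a proof in which the compactness machinery is invoked only once (inside Lemma \ref{lem:stp_toolbox-reproduced}) and makes the role of the finite dimension and of the compact supports explicit, whereas the paper's version is shorter to write because it simply reruns the same contradiction template. For part $(i)$ the two arguments are essentially equivalent: the paper decomposes the field $E$ via \eqref{equ:stp_decomp} and kills its harmonic component using $\vHarmtn{\eps}(\Om)\cap\BGatom^{\perp_{\eps}}=\{0\}$, while you decompose the test fields $\scrB_{1,\ell}=\nabla u_{\ell}+H_{\ell}$ and observe that on $\eps^{-1}\zdznom$ the gradient parts pair to zero, so the two orthogonality constraints coincide as functionals; both rest on Theorem \ref{thm:stp_dir-neu-char} and the Helmholtz decomposition, and your integration by parts $\scp{\eps E}{\nabla u_{\ell}}_{\ltom}=0$ is justified by \eqref{equ:prel_weak=strong_2} and density of $\cictom$ in $\homotom$.
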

\begin{proof}
	By \eqref{equ:stp_complex}, \eqref{equ:stp_decomp} and 
	\eqref{equ:stp_dir-neu-gen_int} we clearly have
	\begin{align*}
		\eps^{-1}\zdznom\cap\vHarmtn{\eps}(\Om)^{\perp_{\eps}}
			=\eps^{-1}\ovl{\rot\Rznom}
			\subset\eps^{-1}\zdznom\cap\BGatom^{\perp_{\eps}}\,.
	\end{align*}
	Now let $E\in\eps^{-1}\zdznom\cap\BGatom^{\perp_{\eps}}$. 
	Then, by \eqref{equ:stp_decomp}, we decompose
	\begin{align*}
		E=\calE+\calH
		  \in\eps^{-1}\ovl{\rot\Rznom}
		  \oplus_{\eps}
		  \vHarmtn{\eps}(\Om)\,,	
	\end{align*}
	hence, by \eqref{equ:stp_dir-neu-gen_int} and 
	Theorem \ref{thm:stp_dir-neu-char} we have 
	$\calH=E-\calE\in\vHarmtn{\eps}(\Om)\cap\BGatom^{\perp_{\eps}}=\{0\}$, which 
	proves statement $(i)$. In order to show $(ii)$ we assume the estimate to be 
	wrong. Then there exists  
	\begin{align*}
	 	(E_{n})_{n\in\mathbb{N}}\subset\rmotom\cap\eps^{-1}\dmonom
	 	\quad\text{with}\quad	
		\normltmoom{E_{n}}=1
	\end{align*}
	and
	\begin{align*}
		\rot E_{n}\xrightarrow{\;\ltom\;} 0
		\,,\qquad\qquad
		\div\eps E_{n}\xrightarrow{\;\ltom\;} 0	
		\,,\qquad\qquad
		\scp{E_{n}}{\scrB_{1,\ell}}_{\lteps(\Om)}
		\xrightarrow{\;\;\;\,\C\;\;\;\,} 0
		\qquad
		(\,\ell=1,\ldots,d_{1,2}\,)
	\end{align*}
	for $n\To\infty$. Thus $(E_{n})_{n\in\N}$ is bounded in 
	$\rmotom\cap\eps^{-1}\dmonom$ and by Weck's local selection theorem it has a 
	subsequence $(E_{\pi(n)})_{n\in\N}$ converging in $\ltlocomb$. By 
	Lemma \ref{lem:stp_max-est_mud}, this sequence even converges in 
	$\rmotom\cap\eps^{-1}\dmonom$ to some 
	\begin{align*}
		E\in\rmotom\cap\eps^{-1}\dmonom
		\quad\text{with}\quad
		\rot E=0\quad\text{resp.}\;\,\div\eps E=0\,.\\[-11pt]
	\end{align*}
	We obtain $E\in\vharmmotn{\eps}(\Om)
	\overset{\eqref{equ:stp_dir-neu-fields_equal}}{=}\vHarmtn{\eps}(\Om)$ and 
	additionally
	\begin{align*}
		\scp{E}{\scrB_{1,\ell}}_{\lteps(\Om)}
		  =\lim_{n\to\infty}\scp{E_{\pi(n)}}{\scrB_{1,\ell}}_{\lteps(\Om)}=0
		  \,,\qquad\qquad
		  \ell=1,\ldots,d_{1,2}\,,
	\end{align*}
	hence $E\in\vHarmtn{\eps}(\Om)\cap\BGatom^{\perp_{\eps}}=\{0\}$ by 
	Theorem \ref{thm:stp_dir-neu-char}, a contradiction.  
\end{proof}
\begin{rem}
	Note that in Theorem \ref{thm:stp_dir-neu-char} and Lemma 
	\ref{lem:stp_max-est_gen} (i) no assumption on $\ga$ resp. $\eps$ is required, 
	except of the General Assumption \ref{gen-ass}. 
\end{rem}
\vspace*{12pt}
%
\subsection{Static Solution Theory} 						 %
%
Let us turn back to the boundary value problem of electro-magneto-statics, using 
\eqref{equ:stp_problem_es} as an illustrative example. As indicated by Lemma 
\ref{lem:stp_toolbox-reproduced} we will solve \eqref{equ:stp_problem_es} for 
given data $(G,f)\in\ltom\times\ltom$ by constructing a solution $E\in\ltmoom$. 
In order to obtain uniqueness, we have to impose some additional conditions, but 
instead of projecting to Dirichlet-Neumann fields, we use projections to 
$\BGatom$.
\begin{defi}
	Let $(G,f,\zeta)\in\ltlocomb\times\ltlocomb
	\times\C^{d_{1\hspace*{-0.025cm},\hspace*{-0.02cm}2}}$. We call 
	$E$ ``(static) solution'' of \eqref{equ:stp_problem_es}, if
	\begin{align*}
		E\in\rgen{}{}{-1,\Gat}(\Om)
		    \cap\eps^{-1}\dgen{}{}{-1,\Gan}(\Om)	
	\end{align*}
	satisfies
	\begin{align}
		\rot E=G
		\,,\qquad\qquad
		\div\eps E=f
		\,,\qquad\qquad 
		\scp{E}{\scrB_{1,\ell}}_{\lteps(\Om)}=\zeta_{\ell}
		\quad\quad(\,
		\ell=1,\ldots,d_{1,2}\,)\,,
	\end{align}
	where $\{\,\scrB_{1,1},\scrB_{1,2},\ldots,
	\scrB_{1,d_{1\hspace*{-0.025cm},\hspace*{-0.02cm}2}}\,\}$ are the 
	elements in $\BGatom$ from Theorem 
	\ref{thm:stp_dir-neu-char}.
\end{defi}
\noindent
Let $G\in\ltom$, $f\in\ltom$, 
$\zeta\in\C^{d_{1\hspace*{-0.025cm},\hspace*{-0.02cm}2}}$,
and let $\eps$ decay with order $\ka>0$. First of all note that 
\eqref{equ:stp_problem_es} admits at most one \emph{static solution}, as for 
the homogeneous problem 
$E\in\vharmmotn{\eps}(\Om)\cap\BGatom^{\perp_{\eps}}$ 
together with \eqref{equ:stp_dir-neu-fields_equal} and Theorem 
\ref{thm:stp_dir-neu-char} yields $E=0$. Turning to existence, necessary 
conditions are obviously 
\begin{align*}
	G\in\rot\rmotom\qquad\text{and}\qquad f\in\div\dmonom,
\end{align*}
the latter one being no further restriction as by Lemma 
\ref{lem:stp_toolbox-reproduced}, Remark \ref{rem:stp_toolbox-reproduced} we 
have $\div\dmonom=\ltom$. But in fact this conditions are already sufficient 
since Lemma \ref{lem:stp_toolbox-reproduced} also yields  
\begin{align*}
	E_{1}\in\rmotom\cap\eps^{-1}\zdmonom
	\qquad\text{and}\qquad
	E_{2}\in\dmonom\cap\eps\,\zrmotom
\end{align*}
with $\rot E_{1}=G$ and $\div E_{2}=f$. Thus, 
\begin{align*}
	\widehat{E}
		:=E_{1}+\eps^{-1}E_{2}
		  \in\rmotom\cap\eps^{-1}\dmonom
\end{align*}
already satisfies 
\begin{align*}
	\rot\widehat{E}=\rot E_{1}=G
	\qquad\text{and}\qquad
	\div\eps\widehat{E}=\div E_{2}=f	\,.
\end{align*}
Moreover, assuming we are able to construct 
$H\in\vharmmotn{\eps}(\Om)=\vHarmtn{\eps}(\Om)$ with 
\begin{align}\label{equ:stp_sol-the_unique-cond}
	\quad\scp{H}{\scrB_{1,\ell}}_{\lteps(\Om)}
		=\zeta_{\ell}-\scp{\widehat{E}}{\scrB_{1,\ell}}_{\lteps(\Om)}
		:=\tilde{\zeta}_{\ell}
	\,,\qquad\qquad
	\ell=1,\ldots,d_{1,2}\,,	
\end{align}
the sum 
\begin{align*}
	E:=\widehat{E}+H\in\rmotom\cap\eps^{-1}\dmonom	
\end{align*}
solves
\begin{align*}
	\rot E=G
	\,,\qquad\qquad
	\div\eps E=f
	\,,\qquad\qquad 
	\scp{E}{\scrB_{1,\ell}}_{\lteps(\Om)}=\zeta_{\ell}
	\quad\quad
	(\,\ell=1,\ldots,d_{1,2}\,)\,,
\end{align*}
hence $E$ is a static solution of \eqref{equ:stp_problem_es}. It remains to 
construct $H$ such that \eqref{equ:stp_sol-the_unique-cond} 
holds. For that we decompose $\scrB_{\ell}$ according to Remark 
\ref{rem:stp_toolbox-reproduced} in
\begin{align*}
	\scrB_{1,\ell}
		=\nabla w_{\ell}+H_{\ell}
		 \in\nabla\homotom\oplus_{\eps}\vHarmtn{\eps}(\Om)
	\,,\qquad\qquad
	\ell=1,\ldots,d_{1,2}\,,
\end{align*}
noting that by Theorem \ref{thm:stp_dir-neu-char} 
$\{H_{\ell}\}_{\ell}$ is a basis of $\vHarmtn{\eps}(\Om)$ 
and w.l.o.g.\;orthonormal in $\lteps(\Om)$. Then
\begin{align*}
	H:=\sum_{j=1,\ldots,d_{1,2}}
	       \tilde{\zeta}_{j}H_{j} \in\vHarmtn{\eps}(\Om)
\end{align*}
indeed satisfies 
\begin{align*}
	\scp{H}{\scrB_{1,\ell}}_{\lteps(\Om)}
		=\underbrace{\scp{H}{\nabla w_{\ell}}_{\lteps(\Om)}}_{=\,0}
		 +\sum_{j=1,\ldots,d_{1\hspace*{-0.025cm},\hspace*{-0.02cm}2}}
		  \tilde{\zeta}_{j}\,\scp{H_{j}}{H_{\ell}}_{\lteps(\Om)}
		=\tilde{\zeta}_{\ell}\,,\quad\quad\ell=1,\ldots,d_{1,2}\,.
\end{align*}
and we have solved the electro-static problem \eqref{equ:stp_problem_es}.
\begin{theo}\label{thm:stp_sol-thm-es}
	Let $\eps$ be 
	$\ka-\co-$decaying with $\ka>0$. For all $(G,f)\in\ltom\times\ltom$ with 
	\begin{align*}
		G\in{}_{0}\mathbb{D}{}_{\Gat}(\Om):=\zdztom\cap\BGanom^{\perp}
	\end{align*}
	and $\zeta\in\C^{d_{1,2}}$ there exists a unique static solution 
	\begin{align*}
		E\in\rmotom\cap\eps^{-1}\dmonom
	\end{align*}
	of \eqref{equ:stp_problem_es}. In addition, the corresponding solution 
	operator
	\begin{align*}
		\Map{\calL_{\eps,0}}
		  	 {{}_{0}\mathbb{D}{}_{\Gat}(\Om)\times\ltom\times\C^{d_{1,2}}}
			 {\rmotom\cap\eps^{-1}\dmonom}
		  	 {(G,f,\zeta)}{E}	
	\end{align*}
	is continuous.
\end{theo}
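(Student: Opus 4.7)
The essential construction is already sketched in the paragraphs preceding the statement, so the plan is to formalize that outline and add the (so far missing) characterization of the range of $\rot$, the handling of the $\scrB$-constraints via Dirichlet--Neumann fields, and the continuity estimate.

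\emph{Uniqueness.} I will take two static solutions $E_1,E_2$ and set $E:=E_1-E_2\in\rmotom\cap\eps^{-1}\dmonom$. By linearity $\rot E=0$, $\div\eps E=0$ and $\langle E,\scrB_{1,\ell}\rangle_{\lteps(\Om)}=0$ for all $\ell$. Hence $E\in\vharmmotn{\eps}(\Om)\cap\BGatom^{\perp_{\eps}}$. Invoking \eqref{equ:stp_dir-neu-fields_equal} identifies this space with $\vHarmtn{\eps}(\Om)\cap\BGatom^{\perp_{\eps}}$, which is trivial by Theorem \ref{thm:stp_dir-neu-char}.

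\emph{Existence.} The key point is to first show that the hypothesis $G\in{}_{0}\mathbb{D}{}_{\Gat}(\Om)=\zdztom\cap\BGanom^{\perp}$ is precisely the statement $G\in\rot\rmotom$. For this, I will invoke the (analog of the) Helmholtz-type decomposition from Remark \ref{rem:stp_toolbox-reproduced} in the form $\ltom=\rot\rmotom\oplus\zrznom$ together with the splitting $\zrznom=\nabla\homonom\oplus\vHarmnt{}(\Om)$. Orthogonality against the gradient part yields exactly $\zdztom$, and orthogonality against $\vHarmnt{}(\Om)$ reduces (using that any harmonic Dirichlet--Neumann field differs from an element of $\BGanom$ by a gradient, by Theorem \ref{thm:stp_dir-neu-char}) to $\BGanom^{\perp}$. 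With this characterization in hand, Lemma \ref{lem:stp_toolbox-reproduced}$(iii)(b)$ produces $E_{1}\in\rmotom\cap\eps^{-1}\zdmonom$ with $\rot E_{1}=G$, and Lemma \ref{lem:stp_toolbox-reproduced}$(iii)(c)$ together with Remark \ref{rem:stp_toolbox-reproduced} (noting $\div\dmonom=\ltom$) yields $E_{2}\in\dmonom\cap\eps\,\zrmotom$ with $\div E_{2}=f$. Setting $\widehat{E}:=E_{1}+\eps^{-1}E_{2}\in\rmotom\cap\eps^{-1}\dmonom$ then solves the rot-- and div--equations. Finally, I decompose each $\scrB_{1,\ell}=\nabla w_{\ell}+H_{\ell}$ via Remark \ref{rem:stp_toolbox-reproduced}, orthonormalize the resulting (finite) system $\{H_{\ell}\}\subset\vHarmtn{\eps}(\Om)$ in $\lteps(\Om)$, and define $H:=\sum_{\ell}\tilde\zeta_{\ell}H_{\ell}$ with $\tilde\zeta_{\ell}:=\zeta_{\ell}-\langle\widehat{E},\scrB_{1,\ell}\rangle_{\lteps(\Om)}$. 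Since $H$ is $\lteps(\Om)$-orthogonal to $\nabla\homotom$, one checks $\langle H,\scrB_{1,\ell}\rangle_{\lteps(\Om)}=\tilde\zeta_{\ell}$, so that $E:=\widehat{E}+H$ is the desired solution.

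\emph{Continuity.} Continuity of $\calL_{\eps,0}$ follows from the weighted Maxwell estimate Lemma \ref{lem:stp_max-est_gen}$(ii)$ applied to $E$, which directly bounds $\|E\|_{\ltmoom}$ by $\|G\|_{\ltom}+\|f\|_{\ltom}+|\zeta|$, combined with $\|\rot E\|_{\ltom}=\|G\|_{\ltom}$, $\|\div\eps E\|_{\ltom}=\|f\|_{\ltom}$, which gives the full graph-norm estimate in $\rmotom\cap\eps^{-1}\dmonom$.

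The main obstacle is the range characterization $\rot\rmotom=\zdztom\cap\BGanom^{\perp}$: the inclusion ``$\subset$'' is routine integration by parts (together with a density argument to pass from $\Rztom$ to $\rmotom$), but the reverse inclusion genuinely uses both the closed range statement of Lemma \ref{lem:stp_toolbox-reproduced}$(iii)(b)$ and the precise role of the compactly supported representatives $\BGanom$ from Theorem \ref{thm:stp_dir-neu-char}. Everything else is bookkeeping on top of the functional-analytic toolbox already assembled in this section.
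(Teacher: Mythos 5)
Your proposal is correct and follows essentially the same route as the paper: uniqueness via $\vharmmotn{\eps}(\Om)\cap\BGatom^{\perp_{\eps}}=\vHarmtn{\eps}(\Om)\cap\BGatom^{\perp_{\eps}}=\{0\}$, existence by building $\widehat{E}=E_{1}+\eps^{-1}E_{2}$ from Lemma \ref{lem:stp_toolbox-reproduced}$(iii)$ and correcting with $H\in\vHarmtn{\eps}(\Om)$ via the decomposition $\scrB_{1,\ell}=\nabla w_{\ell}+H_{\ell}$, and continuity from Lemma \ref{lem:stp_max-est_gen}$(ii)$. The only (harmless) difference is that you re-derive the identification ${}_{0}\mathbb{D}{}_{\Gat}(\Om)=\rot\rmotom$ from the Helmholtz decompositions and Theorem \ref{thm:stp_dir-neu-char}, whereas the paper obtains it by citing the ($\Gat\leftrightarrow\Gan$-swapped) Lemma \ref{lem:stp_max-est_gen}$(i)$ together with the closed-range statement.
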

\begin{proof}
	It remains to show that $\calL_{\eps,0}$ is bounded. But this is a direct 
	consequence of Lemma \ref{lem:stp_max-est_gen}, $(ii)$.
\end{proof}
\noindent
Swapping $\Gat$ and $\Gan$\;resp.\;$\eps$ and $\mu$ we obtain a 
corresponding result for the magneto-static problem \eqref{equ:stp_problem_ms}. 
\begin{theo}\label{thm:stp_sol-thm-ms}
	Let $\mu$ be 
	$\ka-\co-$decaying with $\ka>0$. For all $(F,g)\in\ltom\times\ltom$ with 
	\begin{align*}
		F\in{}_{0}\mathbb{D}{}_{\Gan}(\Om):=\zdznom\cap\BGatom^{\perp}
	\end{align*}
	and $\theta\in\C^{d_{2\hspace*{-0.025cm},\hspace*{-0.02cm}1}}$ there exists 
	a unique static solution 
	\begin{align*}
		H\in\rmonom\cap\mu^{-1}\dmotom
	\end{align*}
	of \eqref{equ:stp_problem_ms}. In addition, the corresponding solution 
	operator
	\begin{align*}
		\Map{\calL_{\mu,0}}
		  	 {{}_{0}\mathbb{D}{}_{\Gan}(\Om)\times
		  	 \ltom\times\C^{d_{2\hspace*{-0.025cm},\hspace*{-0.02cm}1}}}
			 {\rmonom\cap\mu^{-1}\dmotom}
		  	 {(F,g,\theta)}{H}	
	\end{align*}
	is continuous.
\end{theo}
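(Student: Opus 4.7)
The plan is to prove Theorem \ref{thm:stp_sol-thm-ms} by a verbatim translation of the argument leading to Theorem \ref{thm:stp_sol-thm-es}, interchanging $\Gat\leftrightarrow\Gan$ and $\eps\leftrightarrow\mu$ throughout. All the tools built up in Section \ref{sec:e-m-static} are symmetric in the two boundary parts: Weck's local selection theorem (Theorem \ref{thm:stp_WLST}) holds equally for $\Rsnom\cap\mu^{-1}\Dstom$; the Poincar\'e estimate (Lemma \ref{lem:stp_poincare}) is a scalar statement independent of the boundary decomposition; the proof of the weighted Maxwell estimate (Lemma \ref{lem:stp_max-est_mud}) uses only the regularity result Lemma \ref{lem:stp_reg-res}, cut-off localization, and Weck's local selection theorem, all of which are insensitive to swapping $\Gat$ and $\Gan$. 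Thus the analogue of Lemma \ref{lem:stp_toolbox-reproduced} yields $\dim\vharmmont{\mu}(\Om)=d_{2,1}<\infty$, the closed-range identities
\begin{align*}
	\ovl{\rot\Rznom} &=\rot\rmonom=\rot\bigl(\rmonom\cap\mu^{-1}\zdmotom\cap\vharmmont{\mu}(\Om)^{\perp_{-1,\mu}}\bigr),\\
	\ovl{\div\Dztom} &=\div\dmotom=\div\bigl(\dmotom\cap\mu\,\zrmonom\cap\mu\,\vharmmont{\mu}(\Om)^{\perp_{-1,\mu}}\bigr),
\end{align*}
and the corresponding Helmholtz decompositions of $\ltom$.

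Next, applying Theorem \ref{thm:stp_dir-neu-char} with swapped roles furnishes the finite set $\BGanom\subset\zrznom$ of compactly supported fields whose projections along $\ovl{\nabla\Hoznom}$ form a basis of $\vHarmnt{\mu}(\Om)$ and satisfy
\[\vHarmnt{\mu}(\Om)\cap\BGanom^{\perp_{\mu}}=\{0\}.\]
Together with \eqref{equ:stp_dir-neu-fields_equal} (for $\mu$) this gives uniqueness: any solution of the homogeneous problem $F=0$, $g=0$, $\theta=0$ lies in $\vharmmont{\mu}(\Om)\cap\BGanom^{\perp_{\mu}}=\vHarmnt{\mu}(\Om)\cap\BGanom^{\perp_{\mu}}=\{0\}$.

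For existence, I would follow the construction preceding Theorem \ref{thm:stp_sol-thm-es} step by step. By the magnetic analogue of Remark \ref{rem:stp_toolbox-reproduced} one has $\div\dmotom=\ltom$, so there is $H_{2}\in\dmotom\cap\mu\,\zrmonom$ with $\div H_{2}=g$. The hypothesis $F\in\zdznom\cap\BGatom^{\perp}$ together with the analogue of Lemma \ref{lem:stp_max-est_gen}\,$(i)$ (stating that $\mu^{-1}\zdztom\cap\BGanom^{\perp_{\mu}}=\mu^{-1}\ovl{\rot\Rznom}$ is exactly the closure of $\rot\rmonom$) ensures $F\in\rot\rmonom$, yielding $H_{1}\in\rmonom\cap\mu^{-1}\zdmotom$ with $\rot H_{1}=F$. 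Then $\widehat{H}:=H_{1}+\mu^{-1}H_{2}\in\rmonom\cap\mu^{-1}\dmotom$ satisfies the differential equations. Decomposing each $\scrB_{2,\ell}=\nabla w_{\ell}+H_{\ell}\in\nabla\hoznom\oplus_{\mu}\vHarmnt{\mu}(\Om)$, the family $\{H_{\ell}\}$ is (after orthonormalisation) a basis of $\vHarmnt{\mu}(\Om)$, and
\[H:=\widehat{H}+\sum_{j=1}^{d_{2,1}}\bigl(\theta_{j}-\scp{\widehat{H}}{\scrB_{2,j}}_{\ltmu(\Om)}\bigr)H_{j}\]
is the desired static solution.

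Continuity of $\calL_{\mu,0}$ then reduces to an a priori estimate in $\rmonom\cap\mu^{-1}\dmotom$, which is exactly the magnetic counterpart of Lemma \ref{lem:stp_max-est_gen}\,$(ii)$:
\[\normltmoom{H}\leq c\,\Big(\normltom{\rot H}+\normltom{\div\mu H}+\sum_{\ell=1}^{d_{2,1}}|\scp{H}{\scrB_{2,\ell}}_{\ltmu(\Om)}|\Big),\]
proved by the same contradiction argument used there, exploiting only Weck's local selection theorem, Lemma \ref{lem:stp_max-est_mud} with $\eps$ replaced by $\mu$, and the defining property of $\BGanom$. I do not foresee any substantive obstacle, since every step is genuinely symmetric in the two boundary components; the only care needed is in bookkeeping, namely verifying that the compactly supported generators from the swapped version of Theorem \ref{thm:stp_dir-neu-char} still give the correct dimension $d_{2,1}=\dim\vHarmnt{}(\Om)$, which is immediate from the transformation invariance noted in \eqref{equ:dnf_transf-ind}.
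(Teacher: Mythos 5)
Your proposal is correct and is exactly what the paper does: the paper's entire proof of Theorem \ref{thm:stp_sol-thm-ms} consists of the remark that it follows from Theorem \ref{thm:stp_sol-thm-es} by swapping $\Gat\leftrightarrow\Gan$ and $\eps\leftrightarrow\mu$, which is precisely the translation you carry out in detail, using the same symmetric ingredients (Weck's local selection theorem, the swapped Lemma \ref{lem:stp_toolbox-reproduced}, Theorem \ref{thm:stp_dir-neu-char} for $\BGanom$, and the magnetic counterpart of Lemma \ref{lem:stp_max-est_gen}\,$(ii)$ for continuity). The only blemish is the parenthetical identity you quote to get $F\in\rot\rmonom$: what you actually need is $\zdznom\cap\BGatom^{\perp}=\ovl{\rot\Rznom}$, i.e.\ Lemma \ref{lem:stp_max-est_gen}\,$(i)$ itself with $\eps=\mathbbm{1}$ (no swap), rather than the mixed-index version you wrote, but this is cosmetic and does not affect the argument.
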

\begin{rem}
	By Theorem 
	\ref{thm:stp_sol-thm-es} and Theorem \ref{thm:stp_sol-thm-ms} for all 
	\begin{align*}
	  \big(F,g,G,f,\zeta,\theta\big)
	  \in{}_{0}\mathbb{D}{}_{\Gan}(\Om)
	  \times
	  \ltom
	  \times
	  {}_{0}\mathbb{D}{}_{\Gat}(\Om)
	  \times
	  \ltom
	  \times
	  \C^{d_{1\hspace*{-0.025cm},\hspace*{-0.02cm}2}}
	  \times
	  \C^{d_{2\hspace*{-0.025cm},\hspace*{-0.02cm}1}}	
	\end{align*}
	the electro-magneto static system \eqref{equ:stp_problem_es}, 
	\eqref{equ:stp_problem_ms} has a unique solution
	\begin{align*}
		(E,H)\in
			 \big(\rmotom\cap\eps^{-1}\dmonom\big)
		     \times
		     \big(\rmonom\cap\mu^{-1}\dmotom\big)\,.
	\end{align*}
	The corresponding solution operator is continuous and will be denoted 
	by $\calL_{\La,0}$
\end{rem}
%
%
\section{The Time-Harmonic Problem $\om\neq0$}
\label{sec:time-harmonic}
%
%
Having established the static solution theory we treat the time-harmonic 
case. For sake of brevity we just concentrate on the main results and refer to 
\cite{osterbrink_time-harmonic_2019} for the details and some additional results.
Let  
\begin{align*}
	\om\in\C_{+}:=\set{z\in\mathbb{C}}{\Im(z)\geq 0}
	\qquad\text{with}\qquad
	\om\neq 0\,.	
\end{align*}
We are looking for an electro-magnetic field 
$(E,H)\in\rloctom\times\rlocnom$ such that for given data 
$(F,G)\in\ltlocomb\times\ltlocomb$ it 
holds
\[(\,\M+i\om\La\,)\,(E,H)=(F,G)\,.\]
By \eqref{equ:prel_weak=strong_1} the \emph{``Maxwell-operator''}  
\begin{align*}
	\maps{\scrM}
		{\Rztom\times\Rznom\subset\lteps(\Om)\times\ltmu(\Om)}
		{\lteps(\Om)\times\ltmu(\Om)}	
		{(E,H)}{i\La^{-1}\M\hspace*{0.02cm}(E,H)}\,,
\end{align*}
is self-adjoint which in the case of $\om\in\C\sm\reals$ immediately yields an 
$\lt$-solution theory.
\begin{theo}\label{thm:thp_l2-sol}
	Let $\om\in\C\sm\reals$. For every $(F,G)\in\ltom\times\ltom$ 
	system \eqref{equ:int_max-sys}, \eqref{equ:int_bd-cond} has a unique solution 
	\[(E,H)\in\Rztom\times\Rznom\,.\] Moreover, the solution operator, 
	which we denote by $\calL_{\La,\om}:=i(\,\scrM-\om\,)^{-1}\La^{-1}$ 
	is continuous.
\end{theo}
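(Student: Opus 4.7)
The plan is to exploit the self-adjointness of the Maxwell operator $\scrM$, which has already been pointed out in the paper via the characterizations in \eqref{equ:prel_weak=strong_1}. Indeed, $\scrM$ acts on the Hilbert space $\lteps(\Om)\times\ltmu(\Om)$ with inner product $\scp{\,\cdot\,}{\,\cdot\,}_{\ltLa(\Om)}=\scp{\La\,\cdot\,}{\,\cdot\,}_{\lt(\Om)}$, and a direct computation using integration by parts on test fields from $\cictom\times\cicnom$, together with the density definitions of $\Rztom$ and $\Rznom$, shows that $\scrM$ is symmetric. The identification of the adjoint relies exactly on \eqref{equ:prel_weak=strong_1}: if $(\tilde E,\tilde H)\in\dod(\scrM^{\ast})$, then testing against $\cictom\times\cicnom$ forces $\tilde E\in\Rztom$ and $\tilde H\in\Rznom$, i.e., $\dod(\scrM^{\ast})=\dod(\scrM)$.

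With self-adjointness established, standard spectral theory yields $\sigma(\scrM)\subset\reals$, so any $\om\in\C\setminus\reals$ lies in the resolvent set $\rho(\scrM)$. Consequently the resolvent $(\scrM-\om)^{-1}\colon\lteps(\Om)\times\ltmu(\Om)\to\dod(\scrM)$ exists and is a bounded operator, with the quantitative estimate
\begin{align*}
\norm{(\scrM-\om)^{-1}}_{\ltLa(\Om)\to\ltLa(\Om)}\leq\frac{1}{|\Im\om|}.
\end{align*}

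Next, I translate the original system into resolvent form. Given $(F,G)\in\ltom\times\ltom$, the equation $(\M+i\om\La)(E,H)=(F,G)$ is equivalent to $(\scrM-\om)(E,H)=i\La^{-1}(F,G)$, since multiplication by $i\La^{-1}$ transforms one into the other and $\La^{-1}\in\li$ by the uniform positive definiteness of $\La$. Hence the unique solution is
\begin{align*}
(E,H)=i(\scrM-\om)^{-1}\La^{-1}(F,G)=\calL_{\La,\om}(F,G)\in\dod(\scrM)=\Rztom\times\Rznom.
\end{align*}
Uniqueness is automatic: a solution of the homogeneous problem lies in $\dod(\scrM)$ and satisfies $(\scrM-\om)(E,H)=0$, but $\om\in\rho(\scrM)$ forces $(E,H)=0$. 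Boundedness of $\calL_{\La,\om}$ follows by composing the bounded multiplication operator $\La^{-1}$ with the bounded resolvent.

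The only step requiring genuine care is verifying that the adjoint of $\scrM$ has the claimed domain; all other ingredients are routine consequences of the spectral theorem and the fixed inner product. Since the paper has already cited \eqref{equ:prel_weak=strong_1} as providing exactly the weak characterizations of $\Rztom$ and $\Rznom$ needed here, this step reduces to a short duality argument. No further obstruction arises, and the theorem is proved.
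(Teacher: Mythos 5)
Your proposal is correct and follows exactly the route the paper takes: the paper simply notes that by \eqref{equ:prel_weak=strong_1} the Maxwell operator $\scrM$ is self-adjoint on $\lteps(\Om)\times\ltmu(\Om)$, so that $\om\in\C\sm\reals$ lies in the resolvent set and the solution operator $\calL_{\La,\om}=i(\scrM-\om)^{-1}\La^{-1}$ is bounded. Your write-up merely makes explicit the duality argument identifying $\dod(\scrM^{\ast})$ and the algebraic equivalence $(\M+i\om\La)(E,H)=(F,G)\Leftrightarrow(\scrM-\om)(E,H)=i\La^{-1}(F,G)$, both of which are exactly what the paper leaves implicit.
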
  
The case $\om\in\reals\sm\{0\}$ is more challenging, since we want to solve in 
the continuous spectrum of $\scrM$. Clearly this cannot be done for every 
$(F,G)\in\ltom\times\ltom$, since otherwise $\om\not\in\sigma(\scrM)$. Thus 
we have to work in certain subspaces of $\ltom\times\ltom$ and we have to 
generalize the solution concept. 
\begin{defi}
	Let $\om\in\reals\sm\{0\}$ and $(F,G)\in\ltlocom\times\ltlocom$. We call 
	$(E,H)$\;``(radiating) solution"\;of the time-harmonic boundary value 
	problem \eqref{equ:int_max-sys}, \eqref{equ:int_bd-cond}, if
	\begin{align*}
		(E,H)\in\Rsmtom{-\frac{1}{2}}\times\Rsmnom{-\frac{1}{2}}	
	\end{align*}	
	and satisfies
	\begin{align}\label{equ:thp_rad-sol_rad-cond}
		\big(\,\M+i\om\La\,\big)\hspace*{0.03cm}(E,H)=(F,G)
		\,,\qquad
		\big(\,\Laz+\sqrt{\eps_{0}\mu_{0}}\;\Xi\;\big)\hspace*{0.03cm}(E,H)
		\in\ltbigom{-\frac{1}{2}}\times\ltbigom{-\frac{1}{2}}\,,
	\end{align}
	where
	\begin{align*}
		\Laz:=\ptwomat{\eps_{0}}{0}{0}{\mu_{0}}
		\qquad\text{and}\qquad	
		\Xi:=\ptwomat{0}{-\xi\times}{\xi\times}{0}\,.
	\end{align*}
\end{defi}
Conveniently, we can apply the same methods as in 
\cite{pauly_low_2006}, see also \cite{picard_time-harmonic_2001, 
weck_complete_1992, weck_generalized1_1997}, to obtain a solution 
theory. In particular, we use the limiting absorption principle 
introduced by Eidus and approximate solutions to 
$\om\in\reals\sm\{0\}$ by solutions corresponding to $\om\in\C_{+}\sm\reals$. 
Again, Weck's local selection theorem is the crucial tool in the limit process. 
Additionally, the polynomial decay of eigenfunctions as well as an a-priori 
estimate for solutions corresponding to non-real frequencies are needed and both 
are obtained by reduction to similar results known for the Helmholtz equation in 
the whole of $\rthree$. For the details see \cite{osterbrink_time-harmonic_2019}. 
\begin{theo}[\,Generalized Fredholm Alternative\,]\label{thm:thp_fredh-alt}
	Let $\om\in\reals\sm\{0\}$ and 
	let $\eps$,\,$\mu$ be $\ka$-decaying with $\ka>1$. Moreover, let 	
	\begin{align*}
	\gk{\,\scrM-\om\,}
		&:=\setb{(E,H)}
		   {(E,H)\text{ is a radiating solution of }(\,\M+i\om\La\,)\,(E,H)=0\,}\,,\\
		\gs &:=\setb{\,\om\in\C\sm\{0\}\hspace*{0.03cm}}
			   {\,\gk{\,\scrM-\om\,}\neq\{0\}\,}\,.
	\end{align*}
	Then:\\[-10pt]
	\begin{enumerate}[label=$(\roman*)$]
		\item For all $t\in\reals$
			  \begin{align*}
			  	\gk{\,\scrM-\om\,}
			  	\subset
			  	\Big(\,\Rttom\cap\eps^{-1}\rot\Rtnom\,\Big)
			   	\,\times\,
			   	\Big(\,\Rtnom\cap\mu^{-1}\rot\Rttom\,\Big)\,.
			  \end{align*}
		\item $\dim\,\gk{\,\scrM-\om\,}<\infty$\,.\\[-4pt]
		\item $\gs\subset\reals\sm\{0\}$ and $\gs$ has no accumulation point in 
			  $\reals\sm\{0\}$\,.\\[-4pt]
		\item For all
			  $(F,G)\in\dsp\ltbigom{\frac{1}{2}}\times\ltbigom{\frac{1}{2}}$ 
			  there exists a radiating solution $(E,H)$ of 
			  \eqref{equ:int_max-sys}, \eqref{equ:int_bd-cond}, if and only if
			  \begin{align*}
			  		\forall\;(e,h)\in\gk{\,\scrM-\om\,}:
			  		\qquad\scpltom{(F,G)}{(e,h)}=0\,.	
			  \end{align*}
			  Moreover, the solution $(E,H)$ can be chosen, such that
			  \begin{align*}
			  		\forall\;(e,h)\in\gk{\,\scrM-\om\,}:
			  		\qquad\scpltLaom{(E,H)}{(e,h)}=0\,.
			  \end{align*}
			  Then $(E,H)$ is uniquely determined.\\[-4pt]
		\item For all $s$,${-t}>1/2$ the solution operator
			  \begin{align*}
			  	\hspace*{0.75cm}
			  	\map{\calL_{\La,\om}}
			  	    {\Big(\,\ltsom\times\ltsom\,\Big)
			  	     \cap\gk{\,\scrM-\om\,}^{\perp}}
			  		{\Big(\,\Rttom\times\Rtnom\,\Big)\cap\,
			  	     \gk{\,\scrM-\om\,}^{\perp_{\La}}}
			  \end{align*}
			  defined by $(\mathrm{4})$ is continuous. Here $\perp_{\La}$ 
			  indicates the orthogonal complement in $\ltLaom$.\\[-2pt]
	\end{enumerate}
\end{theo}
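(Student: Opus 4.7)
The proof plan closely mirrors the strategy of \cite{pauly_low_2006, picard_time-harmonic_2001} transplanted to the mixed boundary setting. First, for $(i)$ and $(ii)$ I would establish a polynomial decay result for elements of $\gk{\scrM-\om}$. Concretely, if $(E,H)$ is a radiating solution of the homogeneous problem, then the Silver--M\"uller-type condition in \eqref{equ:thp_rad-sol_rad-cond} together with the decay of the perturbations $\hat{\eps},\hat{\mu}$ forces $(E,H)$ to decay at any polynomial rate. The argument is to write $(\M+i\om\Laz)(E,H) = -i\om(\La-\Laz)(E,H)$, multiply by a suitable cut-off $\eta_{\da}$ to reduce to the whole space, and apply the sharp decay results known for the Helmholtz equation in $\rthree$ (via the representation $(-\Delta - \om^2\eps_0\mu_0)$ on each component after decoupling through the Hodge decomposition). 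Iterating this bootstrap gives membership in every $\Rtt\times\Rtn$; the representation of $E,H$ in terms of $\rot$ of the other field then upgrades to the stated intersection with $\eps^{-1}\rot\Rtn$ and $\mu^{-1}\rot\Rtt$. For $(ii)$, boundedness of the unit ball in $\gk{\scrM-\om}$ in some $\Rsmt{-1/2}\times\Rsmn{-1/2}$ combined with Weck's local selection theorem (Theorem \ref{thm:stp_WLST}) and the polynomial decay gives compactness, hence finite-dimensionality.

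Next, for $(iii)-(v)$ I would establish an a priori estimate of the form
\begin{align*}
\|(E,H)\|_{\ltsmom{-1/2}\times\ltsmom{-1/2}} \leq c\,\big(\|(F,G)\|_{\ltbigom{1/2}\times\ltbigom{1/2}} + \|(E,H)\|_{\lt(\Om_{\da})}\big)
\end{align*}
for radiating solutions $(E,H)$ to $(\M+i\om\La)(E,H)=(F,G)$, uniformly on compact subsets of $(\C_+\setminus\{0\})\setminus\gs$. The proof is by contradiction: assuming failure, normalize a sequence, extract by Weck's local selection theorem a locally convergent subsequence, invoke the polynomial decay to upgrade local convergence to convergence in the weighted norm, and obtain in the limit a nontrivial homogeneous radiating solution --- contradicting the assumption that $\om\notin\gs$.

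With this a priori estimate in hand, the limiting absorption principle of Eidus yields $(v)$: for $\om_n\in\C_+\setminus\reals$ with $\om_n\to\om\in\reals\setminus(\{0\}\cup\gs)$, the Theorem \ref{thm:thp_l2-sol} solutions $\calL_{\La,\om_n}(F,G)$ restricted to data in $\ltbigom{1/2}\times\ltbigom{1/2}$ and projected orthogonally to $\gk{\scrM-\om_n}$ remain bounded in the weighted norm; a Weck-compactness plus decay argument extracts a subsequence converging to a radiating solution. The Silver--M\"uller condition is preserved in the limit by testing against the Hodge-decomposed components as in \cite{picard_time-harmonic_2001}. The Fredholm alternative $(iv)$ then follows by the standard duality: the range of $\scrM-\om$ acting on radiating fields is the annihilator of $\gk{\scrM-\om}$, since $\scrM$ is formally self-adjoint and the radiation condition is real-symmetric. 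Finally, $(iii)$ is obtained by noting that the a priori estimate degenerates precisely at $\om\in\gs$, and any accumulation point $\om_\star\in\reals\setminus\{0\}$ of $\gs$ would produce, via normalization of eigensolutions and the compactness argument above, a nontrivial eigensolution at $\om_\star$ of arbitrarily large multiplicity, contradicting $(ii)$ applied locally.

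The main obstacle will be the uniform a priori estimate and, equivalently, showing that weak radiating limits remain radiating: this is where the mixed boundary condition complicates the usual Hodge decomposition, since the decoupling $E = E^\nabla + E^{\rot}$ must respect the split $\Gat/\Gan$, and the scalar Helmholtz comparison has to be set up through the static solution theory of Section \ref{sec:e-m-static} (using Theorems \ref{thm:stp_sol-thm-es}, \ref{thm:stp_sol-thm-ms}) rather than through the classical full-boundary projectors. The compactly supported replacements $\BGatom$ from Theorem \ref{thm:stp_dir-neu-char} are exactly what is needed to handle the Dirichlet--Neumann field contributions in these decompositions without spoiling the weighted estimates.
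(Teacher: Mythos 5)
Your plan reproduces, in expanded form, precisely the strategy the paper itself indicates for Theorem \ref{thm:thp_fredh-alt}: the paper gives no proof here, stating only that the result follows by the methods of Pauly and Picard--Weck--Witsch (Eidus' limiting absorption principle, Weck's local selection theorem in the limit process, polynomial decay of eigenfunctions and an a-priori estimate obtained by reduction to the whole-space Helmholtz equation) and deferring all details to the reference \cite{osterbrink_time-harmonic_2019}. Your treatment of $(i)$, $(ii)$, $(iv)$, $(v)$ and of the a-priori estimate matches this blueprint, including the correct observation that the homogeneous equation $\rot H=-i\om\eps E$ upgrades eigensolutions to $\eps^{-1}\rot\Rtnom$, and that the compactly supported fields $\BGatom$ are the device replacing the classical full-boundary projectors in the mixed setting.

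The one step that would not go through as written is your argument for $(iii)$. A convergent subsequence of normalized eigensolutions $(E_{n},H_{n})\in\gk{\,\scrM-\om_{n}\,}$ with $\om_{n}\to\om_{\star}\in\reals\sm\{0\}$ produces a \emph{single} nontrivial element of $\gk{\,\scrM-\om_{\star}\,}$, not an eigensolution "of arbitrarily large multiplicity", and "$(ii)$ applied locally" yields no contradiction: the conclusion is merely $\om_{\star}\in\gs$, which is not absurd. The missing ingredient is that, by the polynomial decay from $(i)$, each $(E_{n},H_{n})$ lies in $\ltom\times\ltom$, hence in the domain of the self-adjoint operator $\scrM$, so that eigensolutions to distinct real frequencies are pairwise orthogonal in $\ltLaom$. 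A normalized pairwise orthogonal sequence converges weakly to zero; on the other hand Weck's local selection theorem gives strong convergence in $\lt(\Om_{\da})$, and the a-priori estimate $1=\norm{(E_{n},H_{n})}_{\lttom}\leq c\,\norm{(E_{n},H_{n})}_{\lt(\Om_{\da})}$ then forces the local norms to stay bounded away from zero --- that is the contradiction. This is exactly the mechanism the paper uses in Theorem \ref{thm:lfa_spectrum_central-est} to exclude accumulation at $\om=0$, where the role of the orthogonality is played instead by the $\BGatom$-functionals because there the limit lands in the Dirichlet--Neumann fields. With this repair your plan is sound.
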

\begin{rem}\label{rem:thp_sol-op}
	By 
	Theorem \ref{thm:thp_l2-sol} and Theorem \ref{thm:thp_fredh-alt} 
	and for all
	\begin{align*}
		(F,G)
		\in\Big(\,\ltbig{\frac{1}{2}}(\Om)\times\ltbig{\frac{1}{2}}(\Om)\,\Big)
		   \cap\gk{\,\scrM-\om\,}^{\perp}
	\end{align*} 
	the time-harmonic Maxwell system \eqref{equ:int_max-sys}, 
	\eqref{equ:int_bd-cond} has a unique radiating solution 
	\begin{align*}
		(E,H)\in\Rsmtom{-\frac{1}{2}}\times\Rsmnom{-\frac{1}{2}}
		\quad\text{with}\quad
		\big(\,\Laz+\sqrt{\eps_{0}\mu_{0}}\;\Xi\;\big)\hspace*{0.03cm}(E,H)
		\in\ltbigom{-\frac{1}{2}}\times\ltbigom{-\frac{1}{2}}\,.
	\end{align*}
	The corresponding solution operator is continuous and will 
	be denoted by $\calL_{\La,\om}$.
\end{rem} 
%
%
\section{Low Frequency Asymptotics $\om\to0$}
\label{sec:low-frequency-asymptotics}
%
%
In order to discuss the low frequency asymptotics we first have to ensure that 
$\gs$ does not accumulate at zero. For that we show an estimate emerging from a 
representation formula for the homogeneous, isotropic whole space problem, i.e., 
$\Om=\rthree$ and $\Lambda=\Laz$.
\begin{pro}
	$\Om=\rthree$, $\La=\Laz$ and 
	$\om\in\C_{+}\sm\{0\}$, it holds 
	\[\gk{\,\scrM-\om\,}=\{0\}\,.\]
	Thus the solution operator $\calL_{\Laz,\om}$ is well 
	defined for all $(F,G)\in\ltbig{\frac{1}{2}}\times\ltbig{\frac{1}{2}}$.
\end{pro}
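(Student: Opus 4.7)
The plan is to decouple the time-harmonic Maxwell system into componentwise scalar Helmholtz equations on all of $\rthree$, translate the Silver-M\"uller radiation condition into the classical Sommerfeld radiation condition, and then apply Rellich's uniqueness theorem (or, when $\Im\om>0$, a direct energy argument) to conclude that every radiating solution of the homogeneous problem vanishes identically.

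First I would fix $(E,H)\in\gk{\scrM-\om}$, so that by definition $(E,H)\in\Rsmrthree{-\frac{1}{2}}\times\Rsmrthree{-\frac{1}{2}}$ solves
\begin{align*}
    \rot E=-i\om\mu_{0}H,\qquad \rot H=i\om\eps_{0}E\qquad\text{in }\rthree,
\end{align*}
together with $(\Laz+\sqrt{\eps_{0}\mu_{0}}\,\Xi)(E,H)\in\ltbigrthree{-\frac{1}{2}}\times\ltbigrthree{-\frac{1}{2}}$. Taking the divergence of either Maxwell equation and using $\om\neq 0$ yields $\div E=\div H=0$, and applying $\rot$ to the first equation and substituting the second gives $\rot\rot E=\om^{2}\eps_{0}\mu_{0}E$; hence $(\Delta+k^{2})E=0$ componentwise, with $k^{2}:=\om^{2}\eps_{0}\mu_{0}$ and the branch $\Im k\geq 0$, and analogously for $H$. (Sufficient interior regularity to pass from the weak form to the pointwise Helmholtz equation follows from standard elliptic regularity for constant coefficients, or from the version of Lemma \ref{lem:stp_reg-res} for the whole space.)

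Next I would translate the decay/radiation condition: writing $\xi\times(\xi\times E)=(\xi\cdot E)\xi-E$ and combining the two Silver-M\"uller components $\eps_{0}E-\sqrt{\eps_{0}\mu_{0}}\,\xi\times H$ and $\mu_{0}H+\sqrt{\eps_{0}\mu_{0}}\,\xi\times E$, one isolates the radial parts and, together with $\rot E$ and $\rot H$, derives that each scalar component $u$ of $E$ and $H$ satisfies the Sommerfeld radiation condition in the weighted form
\begin{align*}
    \pr u-iku\in\ltbigrthree{\tfrac{1}{2}}.
\end{align*}
This computation is standard in the limiting-absorption framework and is exactly the input needed for the scalar uniqueness result.

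Finally, I would apply Rellich's classical uniqueness theorem: an entire solution of $(\Delta+k^{2})u=0$ on $\rthree$ satisfying the Sommerfeld radiation condition must vanish identically. For real $\om\neq 0$ (real $k>0$) this is Rellich's lemma in its standard form; for $\om\in\C_{+}\sm\reals$ the imaginary part of $k$ is strictly positive and a short integration-by-parts on a large ball (which is legal thanks to the improved decay inherited from the radiation condition, since $\om\not\in\reals$ forces exponential decay of the outgoing fundamental solution) yields $u=0$ even more directly; one may alternatively invoke the fact that $\scrM$ is self-adjoint, so any $L^{2}$-solution to $(\scrM-\om)(E,H)=0$ with $\Im\om>0$ is trivial. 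Either way $E\equiv 0$, and then $H=-(i\om\mu_{0})^{-1}\rot E=0$. The main delicate point in the execution will be the clean justification of the Silver-M\"uller $\Longrightarrow$ Sommerfeld implication at the level of the weighted spaces used here, but this is available through exactly the machinery already invoked in proving Theorem \ref{thm:thp_fredh-alt}.
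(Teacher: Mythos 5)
Your proposal is correct in outline, but it takes a genuinely different route from the paper's. The paper never touches the radiation condition in this proof: it first invokes Theorem \ref{thm:thp_fredh-alt}\,(i) --- the already-established polynomial decay of eigenfunctions --- to place $(E,H)$ in the \emph{unweighted} spaces $(\,\R\cap\zd\,)\times(\,\R\cap\zd\,)$, upgrades to $\Htwo\times\Htwo$ via the regularity lemma of \cite{kuhn_regularity_2010}, reads off $\Delta(E,H)=-\om^{2}\eps_{0}\mu_{0}(E,H)$, and then concludes by pure spectral theory for $\om\in\C\sm\reals$ (since $-\om^{2}\eps_{0}\mu_{0}\notin\sigma(\Delta)\subset(-\infty,0]$) and by the $\lt$-based Rellich estimate plus unique continuation for real $\om$. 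You instead stay in the weighted class $\ltsm{-\frac{1}{2}}$ of radiating solutions, convert Silver--M\"uller into the weak Sommerfeld condition componentwise, and appeal to the classical Rellich uniqueness theorem for entire radiating solutions of the Helmholtz equation. Both work: your argument is more self-contained, as it does not import the polynomial decay of eigenfunctions (a substantial result from \cite{osterbrink_time-harmonic_2019}), whereas the paper's is shorter precisely because that decay is already on the table and instantly trivializes the nonreal case via self-adjointness. Two points in your sketch need care. First, the weighted Sommerfeld condition should read $\pr u-iku\in\ltbig{-\frac{1}{2}}$, not $\ltbig{\frac{1}{2}}$. Second, for $\om\in\C_{+}\sm\reals$ your fallback ``$\scrM$ is self-adjoint, hence the kernel is trivial'' is not immediately available, because a radiating solution is a priori only in $\ltsm{-\frac{1}{2}}$ and not in $\lt$; you must either carry out the surface energy identity on large spheres honestly (which does go through with the weak radiation condition, taking real and imaginary parts according to whether $\om$ is purely imaginary or not), or first establish $\lt$-membership --- which is exactly what the paper's appeal to Theorem \ref{thm:thp_fredh-alt}\,(i) accomplishes.
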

\begin{proof}
	Let $(E,H)\in\gk{\,\scrM-\om\,}$. By Theorem 
	\ref{thm:thp_fredh-alt} (i) and the differential equation we have 
	\begin{align*}
		(E,H)\in(\,\R\cap\zd\,)
			\times
			(\,\R\cap\zd\,)
		\qquad\text{with}\qquad
		\M\,(E,H)=-i\om\Lambda_{0}(E,H)\,.
	\end{align*}
	Hence, by \cite[Lemma 4.2]{kuhn_regularity_2010}, 
	$(E,H)\in(\,\Hgen{}{k}{}\cap\zd\,)\times(\,\Hgen{}{k}{}\cap\zd\,)$ for all 
	$k\in\N_{0}$ and we obtain
	\begin{align*}
		\Delta\,(E,H)=\M^2(E,H)=-\om^2\eps_{0}\mu_{0}\,(E,H)\,.
	\end{align*}
	In other words, $(E,H)\in\Htwo\times\Htwo$ satisfies the Helmholtz-equation 
	with right hand side zero. If $\om\in\C\sm\reals$ we are done, since 
	$\map{\Delta}{\Htwo\subset\lt}{\lt}$ is selfadjoint and therefore 
	$\sigma(\Delta)\subset\reals$, yielding $(E,H)=(0,0)$. For 
	$\om\in\reals\sm\{0\}$ the assertion follows using the Rellich estimate 
	(\,cf. \cite{leis_initial_2013}, p.\,59\,) and the unique continuation 
	principle.
\end{proof}
Now, let $\Om=\rthree$, $\Lambda=\Laz$, $\om\in\C_{+}\sm\{0\}$, 
$(F,G)\in\cic\times\cic$, and let $(E,H):=\calL_{\Laz,\om}\,(F,G)\,$ be the 
corresponding radiating solution. Again, by 
\cite[Lemma 4.2]{kuhn_regularity_2010} and the differential equation, we obtain 
\begin{align*}
	(E,H)\in\big(\,\Hgen{}{2}{<-\frac{1}{2}}\cap\ci\,\big)
	\times
	\big(\,\Hgen{}{2}{<-\frac{1}{2}}\cap\ci\,\big) 
	\qquad\text{and}\qquad 
	\big(\,\Delta+\eps_{0}\mu_{0}\,\om^2\,\big)\,(E,H)
	=(\hat{F},\hat{G})\in\cic\times\cic\,,
\end{align*}
where
\begin{align}\label{equ:lfa_hh-sol}
	(\hat{F},\hat{G}):=(\,\M-i\om\tLaz\,)\,(F,G)-\frac{i}{\om}\Laz^{-1}(\nabla\div F,\nabla\div G)\,,
	\qquad\quad
	\tLaz:=\ptwomat{\mu_{0}}{0}{0}{\eps_{0}}\,.
\end{align}
In fact, $(E,H)$ is the unique radiating solution of the whole space problem 
(\,cf. \cite[Section 4]{weck_generalized1_1997}\,)
\begin{align*}
	&\hspace*{1.5cm}(E,H)\in\Hgen{}{2}{<-\frac{1}{2}}
	 \times\Hgen{}{2}{<-\frac{1}{2}}\,,\\
	&\hspace*{1cm}(\,\Delta+\om^2\eps_{0}\mu_{0}\,)\,(E,H)=(\hat{F},\hat{G})\,,\\
	&\exp\big(\hspace*{-0.03cm}-i\om\sqrt{\eps_{0}\mu_{0}}\,r\,\big)\,(E,H)
	\in\hgen{}{1}{>-\frac{3}{2}}\times\hgen{}{1}{>-\frac{3}{2}}\,.
\end{align*}
For non-real frequencies $\om\in\C_{+}\sm\reals$ this is trivial, because then
\cite[Lemma 4.2]{kuhn_regularity_2010} yields $(E,H)\in\Htwo\times\Htwo$ and the 
Laplacian is self-adjoint on $\Htwo\times\Htwo$. For $\om\in\reals\sm\{0\}$ the 
radiation condition \eqref{equ:thp_rad-sol_rad-cond} shows 
\[\dsp\,(\xi\cdot E,\xi\cdot H)\in\ltbig{-\frac{1}{2}}\times\ltbig{-\frac{1}{2}}\] 
and via the differential equation and the radiation condition we obtain
\begin{itemize}[leftmargin=0.75cm,]
	\item[] $\rot\big(\hspace*{0.02cm}\exp\big(\hspace*{-0.03cm}
	       -i\om\sqrt{\eps_{0}\mu_{0}}\,r\,\big)\,E\,\big)
	       =\exp\big(\hspace*{-0.03cm}
	        -i\om\sqrt{\eps_{0}\mu_{0}}\,r\,\big)\,
	        \Big(G-i\om\hspace*{0.02cm}\big(\,\mu_{0} H+\sqrt{\eps_{0}\mu_{0}}
	        \;\xi\times E\,\big)\,\Big)
	        \hspace*{-0.03cm}\in\ltbig{-\frac{1}{2}}$\,,
	\item[] $\div\big(\hspace*{0.02cm}\exp\big(\hspace*{-0.03cm}
		   -i\om\sqrt{\eps_{0}\mu_{0}}\,r\,\big)\,E\,\big)
		   =-i\exp\big(\hspace*{-0.03cm}
	        -i\om\sqrt{\eps_{0}\mu_{0}}\,r\,\big)
	 		\,\Big(\,\om\sqrt{\eps_{0}\mu_{0}}\;\xi\cdot E
	 		+(\om\eps_{0})^{-1}\div F\,\Big)\hspace*{-0.03cm}
	 		\in\ltbig{-\frac{1}{2}}$\,.
\end{itemize}
Analogously, we see the corresponding results for $H$. Hence, by 
\cite[Lemma 4.2]{kuhn_regularity_2010},
\begin{align*}
	\exp\big(\hspace*{-0.03cm}-i\om\sqrt{\eps_{0}\mu_{0}}\,r\,\big)\,(E,H)
	\in\hgen{}{1}{>-\frac{3}{2}}\times\hgen{}{1}{>-\frac{3}{2}}\,.
\end{align*}
Thus, by \cite[Theorem 4.27, Remark 4.28]{leis_aussenraumaufgaben_1974} we may 
describe $(E,H)$ using the representation formula of the 
Helmholtz-equation, i.e.,
\begin{align*}
	E=\phi_{\om}\star\hat{F}
	 :=(\,\phi_{\om}\ast\hat{F}
	  _{\ell}\,)_{\substack{\phantom{}\\[1pt]\ell=1,2,3}}
	\,,\qquad\qquad
	H=\phi_{\om}\star\hat{G}
	 :=(\,\phi_{\om}\ast\hat{G}
	  _{\ell}\,)_{\substack{\phantom{}\\[1pt]\ell=1,2,3}}\,,
\end{align*}
where $\phi_{\om}=-(4\pi r)^{-1}\exp\big(\hspace*{-0.03cm}-
i\om\sqrt{\eps_{0}\mu_{0}}\,r\,\big)$
is the fundamental solution of the scalar Helmholtz-equation and $\ast$ denotes 
scalar convolution in $\rthree$. Then \eqref{equ:lfa_hh-sol} yields

\begin{align*}
	E=\phi_{\om}\star\Big(-\rot G
		    -i\om\mu_{0}\,F
		    -\frac{i}{\om\eps_{0}}\nabla\div F\,\Big)
	\,,\;\;\;\;\;\,
	H=\phi_{\om}\star\Big(\rot F
		   -i\om\eps_{0}\,G
		   -\frac{i}{\om\mu_{0}}\nabla\div G\,\Big)\,,
\end{align*}
a representation formula for $(E,H)$ provided $(F,G)\in\cic\times\cic$. Next we 
would like to allow more general right hand sides $(F,G)$. For that we move some 
of the differential operators from $F$\;resp.\;$G$ to $\phi_{\om}$, illustrating 
the procedure for $\phi_{\om}\star\rot F$ and $\phi_{\om}\star\nabla\div F$.
\\[12pt]
As both fields $F$ and $G$ are compactly supported we do not have to worry about
integrability of $\phi_{\om}$ at infinity. In $\U_{1}$ we can estimate 
$|\phi_{\om}|\leq c\cdot r^{-1}$ and $|\nabla\phi_{\om}|\leq c\cdot r^{-2}$, 
hence $\phi_{\om}$,\,$\nabla\phi_{\om}\in\lo(\U_{1})$. Moreover, with 
$\tilde{\eta}$ (\,the cut-off function from above\,) we define for 
$n\in\mathbb{N}$ and fixed $x\in\rthree$ the functions 
\begin{align*}
	\eta_{n}(y):=\tilde{\eta}
	\hspace*{0.02cm}(\hspace*{0.02cm}n\cdot|x-y|\hspace*{0.02cm}).
\end{align*}
Then $|\nabla\eta_{n}|\leq c\cdot|x-y|^{-1}$ holds uniformly in $n$, such that
\begin{align*}
	|\,\eta_{n}\hspace*{-0.02cm}\cdot\tau_{x}\phi_{\om}\,|\leq c\cdot|x-y|^{-1}
	\,,\quad\quad
	|\,\p_{j}\eta_{n}\cdot\tau_{x}\phi_{\om}\,|\leq c\cdot|x-y|^{-2}
	\,,\quad\quad
	|\,\eta_{n}\cdot\p_{j}(\tau_{x}\phi_{\om})\,|\leq c\cdot|x-y|^{-2}\,,
\end{align*}
where $\tau_{x}\phi_{\om}(y):=\phi_{\om}(x-y)$. Lebesgue's dominated 
convergence theorem shows
\begin{align*}
	\big(\hspace*{0.02cm}\phi_{\om}\ast\p_{j}F_{k}\hspace*{0.02cm}\big)(x)
		=\lim_{n\to\infty}
		 \scp{\tau_{x}\phi_{\om}}
		     {\ovl{\p_{j}(\hspace*{0.02cm}\eta_{n}F_{k})}\hspace*{0.03cm}}_{\lt}
		=\lim_{n\to\infty}
		 \scp{\tau_{x}\p_{j}\phi_{\om}}
		 	 {\ovl{\eta_{n}F_{k}}}_{\lt}
		=\big(\hspace*{0.02cm}\p_{j}
		 \phi_{\om}\ast F_{k}\hspace*{0.02cm}\big)(x)\,,
\end{align*}
which yields $\phi_{\om}\star\nabla\div F=\div F\star\nabla\phi_{\om}$ and
\begin{align*}
	-\phi_{\om}\star\rot F
		=\pthreevec{\,\phi_{\om}\ast\p_{3}F_{2}-\phi_{\om}\ast\p_{2}F_{3}\,}
				   {\,\phi_{\om}\ast\p_{1}F_{3}-\phi_{\om}\ast\p_{3}F_{1}\,}
				   {\,\phi_{\om}\ast\p_{2}F_{1}-\phi_{\om}\ast\p_{1}F_{2}\,}
	 	=\pthreevec{\,F_{2}\ast\p_{3}\phi_{\om}-F_{3}\ast\p_{2}\phi_{\om}\,}
				   {\,F_{3}\ast\p_{1}\phi_{\om}-F_{1}\ast\p_{3}\phi_{\om}\,}
				   {\,F_{1}\ast\p_{2}\phi_{\om}-F_{2}\ast\p_{1}\phi_{\om}\,}
		=:F\circledast \nabla\phi_{\om}\,.
\end{align*}
\begin{theo}\label{thm:lfa_repr-form}
	Let $0\neq\om\in\K\Subset\C_{+}$ and $\eps_{0},\mu_{0}\in\reals_{+}$. 
	Furthermore, let $1/2<s<3/2$, $t:=s-2$, and $(F,G)\in\Ds\times\Ds$.	
	Then for $(E,H):=\calL_{\Laz,\om}(F,G)$ the representation formulas
	\begin{align}
		E&=G\circledast\nabla\phi_{\om}
		   \,-\,i\om\mu_{0}\hspace*{0.03cm}\phi_{\om}\star F
		   -\frac{i}{\om\eps_{0}}\div F\star\nabla\phi_{\om},\\
		H&=-F\circledast \nabla\phi_{\om}
		   \,-\,i\om\eps_{0}\hspace*{0.03cm}\phi_{\om}\star G
		   -\frac{i}{\om\mu_{0}}\div G\star\nabla\phi_{\om}
	\end{align}
	hold in the sense of\;\;$\ltt$. Moreover, there exist $c>0$, such that for 
	all $\om\in\K\setminus\{0\}$ and all $(F,G)\in\Ds\times\Ds$
	\begin{align*}
		\norm{(E,H)}_{\Rt}
			\leq c\,\Big(\norm{(F,G)}_{\lts}
			     +\frac{1}{|\om|}\norm{(\div F,\div G)}_{\lts}\Big)\,.
	\end{align*}
\end{theo}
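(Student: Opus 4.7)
The plan is to first establish the representation formulas for smooth, compactly supported data via the computation carried out in the paragraphs immediately preceding the theorem, and then to extend both the formulas and the associated estimate to all data in $\Dsrn \times \Dsrn$ by a density and uniform-bound argument. The key point is that $\cicrn \times \cicrn$ is dense in $\Dsrn \times \Dsrn$ (standard mollification and truncation, which preserves the weighted $\lts$ norms of the fields and their divergences), so it suffices to prove the claimed inequality uniformly in $\om \in \K\setminus\{0\}$ on the smooth dense subset.

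For smooth compactly supported $(F,G)$, the preceding discussion already yields $(E,H)=\calL_{\Laz,\om}(F,G)$ and the three convolution identities
\[
\phi_\om \star \rot F = -F \circledast \nabla\phi_\om, \qquad \phi_\om \star \nabla\div F = \div F \star \nabla\phi_\om,
\]
by an integration-by-parts on smoothly truncated $\phi_\om$ combined with dominated convergence based on the local bounds $|\phi_\om|\lesssim r^{-1}$ and $|\nabla\phi_\om|\lesssim r^{-2}$. Inserting these into the Helmholtz representation formula gives the two identities in the theorem.

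The main obstacle — and the heart of the proof — is to establish the uniform (in $\om \in \K$) weighted $\lt$ estimates
\[
\norm{\phi_\om \star u}_{\lttrn} \leq c\,\norm{u}_{\ltsrn}, \qquad \norm{u \ast \p_j\phi_\om}_{\lttrn} \leq c\,\norm{u}_{\ltsrn}, \qquad j=1,2,3,
\]
valid for $1/2<s<3/2$ and $t=s-2$. The first of these follows from the mapping properties of the whole-space Helmholtz resolvent $(-\Delta-\eps_0\mu_0\om^2)^{-1}$ as a continuous operator $\ltsrn \to \lttrn$, uniform on the compact set $\K\subset\C_+$; see, e.g., the results in \cite{leis_initial_2013} which are also used for the a-priori bound underlying Theorem \ref{thm:thp_fredh-alt}. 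The second is obtained from the first combined with the regularity result \cite[Lemma 4.2]{kuhn_regularity_2010}, applied to $v=\phi_\om\star u$: since $-\Delta v - \eps_0\mu_0\om^2 v = u$ and $v\in\lttrn$ with $u\in\ltsrn \subset\ltspo(\rthree)$, one gains a derivative in the weighted scale and hence $\nabla v \in \lttrn$, uniformly in $\om$. (Alternatively one argues directly via Hardy-type weighted estimates for $u \ast \nabla\phi_\om$, exploiting $|\nabla\phi_\om|\leq c(1+r)^{-1}\exp(\Im\om\sqrt{\eps_0\mu_0}\,r)/r$.) Applying these three bounds term-by-term to the representation formulas yields
\[
\norm{(E,H)}_{\lttrn}\leq c\,\Bigl(\norm{(F,G)}_{\ltsrn} + \tfrac{1}{|\om|}\norm{(\div F,\div G)}_{\ltsrn}\Bigr),
\]
uniformly in $\om \in \K\setminus\{0\}$, where the $|\om|$-factor in front of $\phi_\om \star F$ and $\phi_\om \star G$ is absorbed using the boundedness of $\K$.

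To upgrade from $\ltt\times\ltt$ to $\Rt\times\Rt$, I simply invoke the Maxwell equations themselves: $\rot E = G - i\om\mu_0 H$ and $\rot H = -F + i\om\eps_0 E$, so
\[
\norm{\rot E}_{\lttrn} + \norm{\rot H}_{\lttrn} \leq \norm{(F,G)}_{\lttrn} + |\om|\,\max(\eps_0,\mu_0)\,\norm{(E,H)}_{\lttrn},
\]
and $\norm{(F,G)}_{\lttrn}\leq \norm{(F,G)}_{\ltsrn}$ since $t<s$. Combining with the previous display yields the claimed $\Rt$-estimate. Finally, a standard density/limit argument extends both the estimate and the representation identities from $\cicrn\times\cicrn$ to all $(F,G)\in\Dsrn\times\Dsrn$: a Cauchy sequence of smooth approximants in $\Dsrn\times\Dsrn$ produces, by the uniform estimate, a Cauchy sequence of solutions in $\Rtrn\times\Rtrn$, whose limit must coincide with $\calL_{\Laz,\om}(F,G)$ by continuity of $\calL_{\Laz,\om}$ (Remark \ref{rem:thp_sol-op}), and the right-hand sides of the representation formulas likewise converge in $\lttrn$ by the same convolution bounds.
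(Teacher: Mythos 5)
Your overall architecture coincides with the paper's: derive the representation formulas for $\cic$ data by the computation preceding the theorem, prove a bound for the convolution operators from $\lts$ to $\ltt$ that is uniform in $\om\in\K$, recover the $\rot$ part from the Maxwell system, and pass to the limit by density (identifying the limit with $\calL_{\Laz,\om}(F,G)$ via Remark \ref{rem:thp_sol-op}). Those outer layers of your argument are fine. The genuine gap is in the central uniform estimate. The paper bounds both kernels by $|\phi_{\om}|,|\p_{j}\phi_{\om}|\leq c\,(|x-y|^{-1}+|x-y|^{-2})$ uniformly for $\om\in\K$ and then invokes McOwen's lemma: a kernel $|x-y|^{\alpha-\beta-3}$ maps $\lgen{}{2}{\alpha}$ boundedly into $\lgen{}{2}{\beta}$ whenever $-3/2<\alpha<\beta<3/2$; taking $\alpha=s$ and $\beta=t=s-2$ for the kernel $|x-y|^{-1}$, and $\beta=\ttil=s-1$ for $|x-y|^{-2}$, is exactly where the hypothesis $1/2<s<3/2$ enters, and it yields a constant that survives $\om\to0$. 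Your substitute, ``mapping properties of the Helmholtz resolvent $\lts\to\ltt$, uniform on $\K$, see Leis,'' is precisely the statement that needs proof rather than a citation: the standard limiting-absorption estimates are uniform only for frequencies bounded away from zero and live in the scale $\lts\to\ltms$, whereas here $\K$ may contain $0$ and the weight shift is $s\mapsto s-2$, i.e.\ the scaling of the zero-frequency Newtonian potential. Uniformity down to $\om=0$ is the entire point of this theorem (it feeds Corollary \ref{cor:lfa_a-priori-estimate} and Theorem \ref{thm:lfa_spectrum_central-est}), so it cannot be outsourced to a generic resolvent bound.

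Your treatment of the gradient term also fails as written. Setting $v=\phi_{\om}\star u$ with $u\in\lts$, you want $\nabla v\in\ltt$ from weighted elliptic regularity applied to $\Delta v=-u-\eps_{0}\mu_{0}\om^{2}v$. The regularity result (Lemma \ref{lem:stp_reg-res}, resp.\ Kuhn--Pauly) requires the Laplacian to lie one weight step above $v$, i.e.\ in $\lttpo$; the term $u\in\lts\subset\lttpo$ is admissible, but $\om^{2}v$ is only known to lie in $\ltt$, not in $\lttpo$, so the lemma does not apply (and the inclusion you write, $\lts\subset\ltspo$, is reversed: a larger weight index gives a smaller space). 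One could repair this by an integration-by-parts or bootstrap argument, but as stated it is a gap. Your parenthetical alternative via the pointwise bound on $\nabla\phi_{\om}$ is in substance the paper's route, but the required mapping property for the kernel $|x-y|^{-2}$ is then again McOwen's lemma with target weight $\ttil=s-1$, not a Hardy-type inequality. Everything downstream of the uniform convolution bound --- the $|\om|^{-1}$ bookkeeping, the differential equation giving the $\Rt$ norm, and the density limit --- is correct and matches the paper.
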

\begin{proof}
	Since $\cic\subset\Ds$ is dense, we choose a sequence 
	$\big((F_{n},G_{n})\big)_{n\in\mathbb{N}}\subset\cic\times\cic$ converging to
	$(F,G)$ and define 
	$(E_{n},H_{n})=\calL_{\Laz,\om}(F_{n},G_{n})\in\ltt\times\ltt$. Then 
	Remark \ref{rem:thp_sol-op} yields convergence of 
	$\big((E_{n},H_{n})\big)_{n\in\mathbb{N}}$ to $(E,H)\in\Rt\times\Rt$ 
	and as shown above, we may represent $(E_{n},H_{n})$ by
	\begin{align}
		E_{n}&=G_{n}\circledast\nabla\phi_{\om}
		   \,-\,i\om\mu_{0}\hspace*{0.03cm}\phi_{\om}\star F_{n}
		   -\frac{i}{\om\eps_{0}}\div F_{n}\star\nabla\phi_{\om},
		   \label{equ:lfa_repr-form_electric}\\
		H_{n}&=-F_{n}\circledast \nabla\phi_{\om}
		   \,-\,i\om\eps_{0}\hspace*{0.03cm}\phi_{\om}\star G_{n}
		   -\frac{i}{\om\mu_{0}}\div G_{n}\star\nabla\phi_{\om}\,.
		   \label{equ:lfa_repr-form_magnetic}
	\end{align}
	The involved convolution kernels essentially consist of 
	$\phi_{\om}$ and $\p_{j}\phi_{\om}$, which can be estimated by 
	\begin{align*}
		|\phi_{\om}|\,,\,|\p_{j}\phi_{\om}|
			\leq c\cdot\Big(\,|x-y|^{-1}+|x-y|^{-2}\,\Big)
		\,,\qquad (\,j=1,2,3\,)\,.
	\end{align*}
 	Moreover, from \cite[Lemma 1]{mcowen_behavior_1979} we obtain that 
 	integral operators with kernels of the form $|x-y|^{\alpha-\beta-3}$ 
 	map $\lgen{}{2}{\alpha}$ continuously to $\lgen{}{2}{\beta}$, 
 	if $-3/2<\alpha<\beta<3/2$. Hence, by choosing 
 	\[-3/2<t=s-2<\ttil:=s-1<s<3/2\,,\]
 	we have 
 	\begin{align*}
 		|x-y|^{-1}=|x-y|^{s-t-3}
 		\qquad\text{resp.}\qquad
 		|x-y|^{-2}=|x-y|^{s-\ttil-3}\,,
 	\end{align*}
 	and the right hand sides of \eqref{equ:lfa_repr-form_electric} and 
 	\eqref{equ:lfa_repr-form_magnetic} define bounded linear 
 	operators from $\lts$ to $\ltt$. Passing to the limit 
 	$n\To\infty$ in \eqref{equ:lfa_repr-form_electric},
 	\eqref{equ:lfa_repr-form_magnetic} we obtain the asserted representation 
 	formulas. By the continuity of the convolution operators we have the estimate 
 	 \begin{align*}
		\norm{(E,H)}_{\ltt}
			\leq c\,\Big(\norm{(F,G)}_{\lts}
				 +|\om|^{-1}\norm{(\div F,\div G)}_{\lts}\Big)	
	\end{align*}
 	which holds uniformly in $\om$. Finally the differential equation yields the 
 	asserted estimate.
\end{proof}
\noindent
A similar estimate also holds for radiating solutions in exterior weak Lipschitz 
domains.  
\begin{cor}\label{cor:lfa_a-priori-estimate}
	Let  $1/2<s<3/2$, $t:=s-2$, 
	and let $\eps,\mu$ be $\ka-\co-$decaying with order $\ka>2$, as well as let 
	$\K\Subset\C_{+}$. 
	Then there 
	exist $c,\da>0$ such that for all\;\,$0\neq\om\in\K$ and
	\[(F,G)\in\big(\Ds(\Om)\times\Ds(\Om)\big)\cap\gk{\scrM-\om}^{\perp}\]
	it holds
	\begin{align*}
		\norm{\calL_{\La,\om}(F,G)}_{\lttom}
			\leq c\,\Big(\,\normltsom{(F,G)}
		         +\frac{1}{|\om|}\normltsom{(\div F,\div G)}
		         +\norm{\calL_{\La,\om}(F,G)}_{\lt(\Om_{\da})}\Big)\,.
	\end{align*}
	Moreover, by the differential equation the 
	$\normlttom{\cdot}\hspace*{-0.1cm}-\hspace*{0.02cm}$norm on the 
	left hand side can be replaced by $\norm{\cdot}_{\Rtom}$.
\end{cor}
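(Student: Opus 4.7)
The plan is to reduce the exterior-domain problem to a whole-space problem with constant coefficients by cutting off the radiating solution $(E,H):=\calL_{\La,\om}(F,G)$ outside a large ball and then invoking Theorem~\ref{thm:lfa_repr-form}. Concretely, I would fix $\delta\geq\rhat$ large (to be chosen at the end) and set $\tilde{E}:=\eta_{\delta}E$, $\tilde{H}:=\eta_{\delta}H$, extended by zero to all of $\rthree$. A direct computation using $\M(\eta_{\delta}\,\cdot)=\eta_{\delta}\M(\cdot)+[\M,\eta_{\delta}](\cdot)$ together with the splitting $\La=\Laz+\hat\La$ shows that $(\tilde{E},\tilde{H})$ satisfies $(\M+i\om\Laz)(\tilde{E},\tilde{H})=(-\tilde{F},\tilde{G})$ in all of $\rthree$, where
\begin{align*}
\tilde{F}:=\eta_{\delta}F+i\om\eta_{\delta}\hat\eps E+\nabla\eta_{\delta}\times H,\qquad
\tilde{G}:=\eta_{\delta}G-i\om\eta_{\delta}\hat\mu H+\nabla\eta_{\delta}\times E.
\end{align*}
Since $\eta_{\delta}\equiv 1$ on $\cU_{2\delta}$, the pair $(\tilde{E},\tilde{H})$ inherits the Silver--M\"uller radiation condition from $(E,H)$, so it coincides with $\calL_{\Laz,\om}(\tilde{F},\tilde{G})$.

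Applying Theorem~\ref{thm:lfa_repr-form} to $(\tilde{E},\tilde{H})$ yields
\begin{align*}
\|(\tilde{E},\tilde{H})\|_{\ltt(\rthree)}\leq c\,\Big(\|(\tilde{F},\tilde{G})\|_{\ltsrthree}+\tfrac{1}{|\om|}\|(\div\tilde{F},\div\tilde{G})\|_{\ltsrthree}\Big),
\end{align*}
and the task is to estimate each contribution on the right. The commutator terms $\nabla\eta_{\delta}\times E$ and $\nabla\eta_{\delta}\times H$ (as well as their divergences, which by the identity $\div(A\times B)=B\cdot\rot A-A\cdot\rot B$ and the Maxwell equation become linear combinations of $\nabla\eta_{\delta}\cdot F$, $\nabla\eta_{\delta}\cdot G$, $\nabla\eta_{\delta}\cdot(\eps E)$ and $\nabla\eta_{\delta}\cdot(\mu H)$) are all supported in the bounded annulus $\supp\nabla\eta_{\delta}\subset\Om_{2\delta}$ and are therefore dominated by $\|(F,G)\|_{\lt(\Om_{2\delta})}+\|(E,H)\|_{\lt(\Om_{2\delta})}$; the first summand is bounded by $\|(F,G)\|_{\ltsom}$ and the second feeds into the local term of the claimed estimate (after relabeling $2\delta\leadsto\delta$). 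The plain cut-off terms $\eta_{\delta}F$, $\eta_{\delta}G$ and their divergences are controlled directly by $\|(F,G)\|_{\ltsom}$ and $|\om|^{-1}\|(\div F,\div G)\|_{\ltsom}$, using $\div\eps E=i\om^{-1}\div F$ and $\div\mu H=-i\om^{-1}\div G$ wherever $\div E$ or $\div H$ surfaces.

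The critical step is the treatment of the perturbation contributions $i\om\eta_{\delta}\hat\eps E$ and $i\om\eta_{\delta}\hat\mu H$ together with their divergences. Since $\hat\eps=\calO(r^{-\kappa})$, the $\ltsrthree$-norm satisfies
\begin{align*}
\|\eta_{\delta}\hat\eps E\|_{\ltsrthree}\leq c\,\sup_{r\geq\delta}\rho^{s-\kappa-t}\cdot\|E\|_{\lttom}\leq c\,\delta^{2-\kappa}\|E\|_{\lttom},
\end{align*}
and the prefactor $\delta^{2-\kappa}\to0$ as $\delta\to\infty$ precisely because $\kappa>2$, which allows absorption into the left-hand side. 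For the divergence, expand $\div(\eta_{\delta}\hat\eps E)=(\nabla\eta_{\delta})\cdot(\hat\eps E)+\eta_{\delta}(\nabla\!\cdot\!\hat\eps)E+\eta_{\delta}\hat\eps{:}\nabla E$: the first summand lives in the annulus, the second exploits $\partial_{j}\hat\eps=\calO(r^{-1-\kappa})$ yielding prefactor $\delta^{1-\kappa}$, and the third requires local $\hgen{}{1}{}$-regularity of $E$ on $\cU_{\delta}$. This regularity is supplied by Lemma~\ref{lem:stp_reg-res} applied with shifted weight $s'=t-1$ (since $E\in\lttom$, $\rot E\in\lttom$, and $\div\eps E=i\om^{-1}\div F\in\ltsom\subset\lttom$), producing $\nabla E\in\ltt$ locally with norm controlled by $\|E\|_{\Rtom}+|\om|^{-1}\|\div F\|_{\ltsom}$; again the $\kappa>2$ decay produces a prefactor $\delta^{2-\kappa}$ that absorbs this third piece.

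The main obstacle is exactly this last step: the summand $\eta_{\delta}\hat\eps{:}\nabla E$ (and its magnetic analogue) forces the use of weighted local elliptic regularity of radiating solutions and is the reason the hypothesis has to be tightened from the generic $\kappa>0$ of earlier results to $\kappa>2$ here. Once all pieces are assembled, choosing $\delta$ large enough to absorb all perturbative prefactors into the left-hand side delivers
\begin{align*}
\|(E,H)\|_{\lttom}\leq c\,\Big(\|(F,G)\|_{\ltsom}+\tfrac{1}{|\om|}\|(\div F,\div G)\|_{\ltsom}+\|(E,H)\|_{\lt(\Om_{\delta})}\Big),
\end{align*}
after estimating $\|(E,H)\|_{\lttom}$ by $\|(\tilde{E},\tilde{H})\|_{\ltt(\rthree)}+\|(E,H)\|_{\lt(\Om_{2\delta})}$. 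The final statement with $\|\cdot\|_{\Rtom}$ instead of $\|\cdot\|_{\lttom}$ on the left follows immediately from the differential equations $\rot E=G-i\om\mu H$ and $\rot H=F+i\om\eps E$, since $\eps,\mu\in\li$ and $|\om|$ is bounded on the compact set $\K$.
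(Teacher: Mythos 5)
Your proposal is correct and follows essentially the same route as the paper: cut off the radiating solution with $\eta$ outside a large ball, identify the cut-off fields with $\calL_{\Laz,\om}$ applied to a modified right-hand side, invoke the whole-space estimate of Theorem~\ref{thm:lfa_repr-form}, control the commutator and perturbation terms via the decay of $\hat\eps,\hat\mu$ and the regularity Lemma~\ref{lem:stp_reg-res}, and absorb the $\calO(\da^{2-\ka})$ remainders using $\ka>2$ (the paper fixes the cut-off radius and absorbs via $\normltsmkaom{(E,H)}$ at the end, whereas you let the cut-off radius itself grow, which is the same trick). The only blemish is a sign bookkeeping slip in your formula for $\tilde F$ (the correct first component of the modified data is $\eta_{\da}F-i\om\eta_{\da}\hat\eps E-\nabla\eta_{\da}\times H$), which has no effect on any of the norm estimates.
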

\begin{proof}
	Let $(E,H)=\calL_{\La,\om} (F,G)$ (\,which exists by 
	Remark \ref{rem:thp_sol-op}\,) and $\rtil>\rhat$ such that 
	$\eps,\mu\in\co(\cU_{\rtil})$. Then 
	\begin{align*}
		(\tilde{E},\tilde{H}):=\eta_{\rtil}(E,H)
		                       \in\Rsm{-\frac{1}{2}}\times\Rsm{-\frac{1}{2}}\,,	
	\end{align*}
	and as $(\,\M+i\om\La\,)\,(E,H)=(F,G)$ it holds 
	\begin{align}\label{equ:lfa_main-est_div}
		\hspace*{0.02cm}(\div\eps E,\div \mu H)
			=-\frac{i}{\om}\,(\div F,\div G)
			 \in\ltsom\times\ltsom	\,,
	\end{align}
	such that by Lemma \ref{lem:stp_reg-res} we even have
	\begin{align*}
		(E,H)\in\Hgen{}{1}{<-\frac{1}{2}}(\supp\eta_{\rtil})
		 \times
		 \Hgen{}{1}{<-\frac{1}{2}}(\supp\eta_{\rtil})\,,
		\qquad\text{especially}\qquad
		(\tilde{E},\tilde{H})\in\Hgen{}{1}{<-\frac{1}{2}}
		 \times\Hgen{}{1}{<-\frac{1}{2}}\,.
	\end{align*}
	Moreover, $(\tilde{E},\tilde{H})$ satisfies the radiation condition
	\begin{align*}
		\big(\,\Laz+\sqrt{\eps_{0}\mu_{0}}\;\Xi\;\big)\hspace*{0.03cm}
		(\tilde{E},\tilde{H})
		\in\ltbigom{-\frac{1}{2}}\times\ltbigom{-\frac{1}{2}}\,
	\end{align*} 
	and (as $\ka>2\geq s+1/2$) solves
	\begin{align}\label{equ:lfa_main-est_dgl-tsol}
		\big(\hspace*{0.03cm}\M+\,i\om&\Laz\hspace*{0.03cm}\big)\,
		 (\tilde{E},\tilde{H})
		   =\mathrm{C}_{\M,\eta_{\rtil}}(E,H)
		    -i\om\big(\La-\Laz\big)(\tilde{E},\tilde{H})+\eta_{\rtil}(F,G)
		   =:(\tilde{F},\tilde{G})\in\Ds\times\Ds\,,	
	\end{align} 
	where $\mathrm{C}_{A,B}:=AB-BA$. 
	We obtain $(\tilde{E},\tilde{H})=\calL_{\Laz,\om}(\tilde{F},\tilde{G})$ 
	and Theorem \ref{thm:lfa_repr-form} yields some $c>0$ such that
	\begin{align}\label{equ:lfa_main-est_fsest-tsol}
	  \big\|\,(\tilde{E},\tilde{H})\,\big\|_{\ltt}
	   \leq c\,\Big(\,\big\|\,(\tilde{F},\tilde{G})\,\big\|_{\lts}
		+\frac{1}{\om}\big\|\,(\div\tilde{F},\div\tilde{G})\,\big\|_{\lts}\,\Big)
	\end{align}
	independent of $\om$, $(\tilde{F},\tilde{G})$ or $(\tilde{E},\tilde{H})$. 
	Furthermore, \eqref{equ:lfa_main-est_div} and the differential equation 
	\eqref{equ:lfa_main-est_dgl-tsol} show
	\begin{alignat}{4}
		\div F&=i\om\div\eps E
		\,,\qquad & 
		       & 
		\qquad & 
		\div G&=i\om\div\mu H\,,
		\qquad \qquad &
		       &\text{ in }\Om\,,\label{equ:lfa_main-est_div-rhs}\\
		\div \tilde{F}&=i\om\eps_{0}\div\tilde{E}
		\,,\qquad & 
		       & 
		\qquad & 
		\div \tilde{G}&=i\om\mu_{0}\div\tilde{H} \,,
		\qquad \qquad &
		       &\text{ in }\rthree\,,\label{equ:lfa_main-est_div-trhs}
	\end{alignat}
	such that combining \eqref{equ:lfa_main-est_fsest-tsol} and 
	\eqref{equ:lfa_main-est_div-trhs} it holds
	\begin{align*}
	  \hspace*{-0.8cm}\big\|\,(E,H)\,\big\|_{\lttom}
	  &\leq c\,\Big(\,\big\|\,(E,H)\,\big\|_{\lt(\Om_{2\rtil})}
	   +\big\|\,(\tilde{E},\tilde{H})\,\big\|_{\ltt(\cU_{\rtil})}\,\Big)\\
	  &\leq c\,\Big(\,\big\|\,(E,H)\,\big\|_{\lt(\Om_{2\rtil})}
	   +\big\|\,(\tilde{F},\tilde{G})\,\big\|_{\lts}
	   +\frac{1}{|\om|}\big\|\,(\div\tilde{F},\div\tilde{G})\,\big\|_{\lts}
	   \,\Big)\\
	  &\leq c\,\Big(\,\big\|\,(E,H)\,\big\|_{\ltsmka(\Om)}
	   +\big\|\,(F,G)\,\big\|_{\ltsom}
	   +\big\|\,(\div\eps_{0}\tilde{E},\div\mu_{0}\tilde{H})\,\big\|_{\lts}\,\Big)\,.
	\end{align*}
	With \eqref{equ:lfa_main-est_div-rhs} the last term on the right hand 
	side can be estimated by
	\begin{align*}
	  &\hspace*{-0.8cm}
	  \big\|\,(\div\eps_{0}\tilde{E},\div\mu_{0}\tilde{H})\,\big\|_{\lts}\\
	  &\leq c\,\Big(\,\norm{(E,H)}_{\lt(\Om_{2\rtil})}
	   +\norm{(\div \eps_{0}E,\div \mu_{0}H)}_{\lts(\supp\eta_{\rtil})}\,\Big)\\
	  &\leq c\,\Big(\,\norm{(E,H)}_{\lt(\Om_{2\rtil})}
	   +\norm{(\div\eps E,\div\mu H)}_{\lts(\supp\eta_{\rtil})}
	   +\norm{(\div\hat{\eps}E,\div\hat{\mu}H)}_{\lts(\supp\eta_{\rtil})}\,\Big)\\
	  &\leq c\,\Big(\,\norm{(E,H)}_{\lt(\Om_{2\rtil})}
	   +\frac{1}{|\om|}\norm{(\div F,\div G)}_{\ltsom}
	   +\norm{(E,H)}_{\hgen{}{1}{s-\ka-1}(\supp\eta_{\rtil})}\,\Big)\,.
	\end{align*}
	We end up with
	\begin{align*}
	  \big\|\,(E,H)\,\big\|_{\lttom}
	  &\leq c\,\Big(\,\norm{(E,H)}_{\ltsmka(\Om)}
	   +\norm{(E,H)}_{\hgen{}{1}{s-\ka-1}(\supp\eta_{\rtil})}\\
	  &\qquad\qquad+\norm{(F,G)}_{\ltsom}
	   +\frac{1}{|\om|}\norm{(\div F,\div G)}_{\ltsom}\,\Big)
	\end{align*}
	and the estimate from Lemma \ref{lem:stp_reg-res} as well as the 
	differential equation together with \eqref{equ:lfa_main-est_div-rhs} 
	yield
	\begin{align*}
	  \norm{(E,H)}_{\lttom}
	  \leq c\,\Big(\,\norm{(E,H)}_{\ltsmka(\Om)}+\norm{(F,G)}_{\ltsom}
	   +\frac{1}{|\om|}\norm{(\div F,\div G)}_{\ltsom}\,\Big)\,.
	\end{align*}
	Finally, as $\ka>2$ the assertion follows by 
	\begin{align*}
		\norm{(E,H)}_{\ltsmka(\Om)}^2
			 \leq\norm{(E,H)}_{\lt(\Om_{\da})}^2
			  +\big(1+\da^2\big)^{2-\ka}
			  \cdot\norm{(E,H)}_{\lttom}^2\,,
	\end{align*}
	choosing $\da>\rhat$ big enough.
\end{proof}
\begin{theo}\label{thm:lfa_spectrum_central-est}
	Let $1/2<s<3/2$, $t:=s-2$, and let $\eps,\mu$ be $\ka-\co-$decaying with 
	order $\ka>2$, and let
	\begin{align*}
		\BGatom=\{\scrB_{1,1},\ldots,\scrB_{1,d_{1\hspace*{-0.025cm},
		\hspace*{-0.02cm}2}}\}\subset\Rztom
		\qquad\text{resp.}\qquad
		\BGanom=\{\scrB_{2,1},\ldots,
				  \scrB_{2,d_{2\hspace*{-0.025cm},\hspace*{-0.02cm}1}}\}
				  \subset\Rznom
	\end{align*} 
	be the sets from Theorem \ref{thm:stp_dir-neu-char}. Then:\\[-8pt]
	\begin{enumerate}[leftmargin=0.85cm,label=$(\roman*)$,itemsep=7pt]
		\item $\gs$ has no accumulation point at zero. In particular, there exists 
			  some $\tilde{\om}>0$ such that 
			  \begin{align*}
			  	\gs\cap\C_{+,\tilde{\om}}=\emptyset
			  	\qquad\text{with}\qquad
			  	\C_{+,\tilde{\om}}
			  	 :=\setb{\,\om\in\C_{+}\,}{\,|\om|\leq\tilde{\om}\,}\,.
			  \end{align*}
		\item $\calL_{\La,\om}$ is well defined on the whole of 
			  $\ltbig{\frac{1}{2}}(\Om)\times\ltbig{\frac{1}{2}}(\Om)$
			  for all $\om\in\C_{+,\tilde{\om}}\sm\{0\}$.
		\item 
			  There exists a constant $c>0$ such that 
			  \begin{align*}
			  	\normlttom{\calL_{\La,\om}(F,G)}
			  		&\leq c\,\Big(\normltsom{(F,G)}
			  		             +\frac{1}{|\om|}\normltsom{(\div F,\div G)}\\
			  		&\qquad
			  		 +\frac{1}{|\om|}\sum_{\ell=1,\ldots,
			  		  d_{1\hspace*{-0.025cm},\hspace*{-0.02cm}2}}
			  		  |\scp{F}{\scrB_{1,\ell}}_{\ltom}|
			  	     +\frac{1}{|\om|}\sum_{\ell=1,\ldots,
			  	      d_{2\hspace*{-0.025cm},\hspace*{-0.02cm}1}}
			  		  |\scp{G}{\scrB_{2,\ell}}_{\ltom}|          
			  		\Big)
			  \end{align*}
			  holds for all $\om\in\C_{+,\tilde{\om}}\sm\{0\}$ and 
			  $(F,G)\in\Dsnom\times\Dstom$. Using the differential equation, the 
			  $\normlttom{\cdot}\hspace*{-0.1cm}-$norm on the left hand side may 
			  be replaced by the natural norm in 
			  \begin{align*}
			  	\big(\,\Rttom\cap\eps^{-1}\Dtnom\,\big)
			  	\times
			    \big(\,\Rtnom\cap\mu^{-1}\Dttom\,\big)\,.\\[-8pt]
			  \end{align*}
	\end{enumerate}
\end{theo}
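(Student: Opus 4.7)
My plan is to establish all three statements in a single contradiction argument built on Corollary \ref{cor:lfa_a-priori-estimate} combined with Weck's local selection theorem (Theorem \ref{thm:stp_WLST}) and the static solution theory of Section \ref{sec:e-m-static}. The compactness in Theorem \ref{thm:stp_WLST} will allow us to pass to static limits, and uniqueness of the static problem (Theorems \ref{thm:stp_sol-thm-es}, \ref{thm:stp_sol-thm-ms}) will provide the contradiction. The main difficulty is that the a-priori estimate in Corollary \ref{cor:lfa_a-priori-estimate} carries an additional $\|\calL_{\La,\om}(F,G)\|_{\lt(\Om_\da)}$ term on the right-hand side, and that term must be absorbed simultaneously with dealing with the possible non-trivial kernels and a frequency going to zero.

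First I would prove $(iii)$ directly, since $(i)$ and $(ii)$ follow from it by specializing the data. Assume the estimate fails. Then there exist sequences $\om_n \in \K \setminus \{0\}$ (for some compact $\K \Subset \C_{+,1}$) and $(F_n,G_n) \in \Dsnom \times \Dstom$ such that, writing $(E_n,H_n) := \calL_{\La,\om_n}(F_n,G_n)$,
\begin{align*}
\|(E_n,H_n)\|_{\lttom} = 1,
\qquad
\|(F_n,G_n)\|_{\ltsom} + \tfrac{1}{|\om_n|}\|(\div F_n, \div G_n)\|_{\ltsom} + \tfrac{1}{|\om_n|}\sum_{i,\ell}|\scp{\cdot}{\scrB_{i,\ell}}_{\ltom}| \to 0.
\end{align*}
By choosing a subsequence, either $\om_n \to \om_0 \neq 0$ or $\om_n \to 0$. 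Corollary \ref{cor:lfa_a-priori-estimate} yields a uniform bound on $(E_n,H_n)$ in $\Rttom \times \Rtnom$ (modulo the local $\lt$-term, which is controlled because the $\lttom$-norm equals one). Hence by Weck's local selection theorem a subsequence converges in $\ltlocomb \times \ltlocomb$, and by Corollary \ref{cor:lfa_a-priori-estimate} again actually in $\lttom \times \lttom$, to some $(E,H)$ with $\|(E,H)\|_{\lttom}=1$.

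In the case $\om_n \to \om_0 \neq 0$, the limit $(E,H)$ is a radiating solution of the homogeneous system at frequency $\om_0$ (the Silver--M\"uller condition passes to the limit via the uniform estimates and standard arguments as in Theorem \ref{thm:thp_fredh-alt}), i.e., $(E,H) \in \gk{\scrM-\om_0}$. But also $(F_n,G_n) \to 0$ in $\ltsom$ means $(E_n,H_n) \perp \gk{\scrM-\om_n}^{\perp}$-aspect is preserved in the limit procedure; a careful re-orthogonalization step forces $(E,H) \perp_\La \gk{\scrM-\om_0}$, which combined with the radiation condition gives $(E,H)=0$, contradicting $\|(E,H)\|_{\lttom} = 1$. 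In the case $\om_n \to 0$, multiplying the differential equation by the small factor $\om_n$, the structure together with $\tfrac{1}{|\om_n|}\|(\div F_n,\div G_n)\|_{\ltsom} \to 0$ shows that the limit $(E,H) \in \rtom \times \rtom$ satisfies $\rot E = 0$, $\rot H = 0$, $\div \eps E = 0$, $\div \mu H = 0$, so $(E,H) \in \vharmmotn{\eps}(\Om) \times \vharmmont{\mu}(\Om)$. The orthogonality $\tfrac{1}{|\om_n|}|\scp{F_n}{\scrB_{1,\ell}}_{\ltom}|,\tfrac{1}{|\om_n|}|\scp{G_n}{\scrB_{2,\ell}}_{\ltom}| \to 0$ transfers in the limit to $\scp{E}{\scrB_{1,\ell}}_{\ltom}=\scp{H}{\scrB_{2,\ell}}_{\ltom}=0$ by integration-by-parts against the test data (using that $\scrB_{i,\ell}$ are compactly supported curl-free vector fields, so $\scp{\cdot}{\scrB}$ sees exactly the Dirichlet-Neumann component). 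Theorem \ref{thm:stp_dir-neu-char} then forces $(E,H)=0$, again a contradiction.

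Having $(iii)$, statements $(i)$ and $(ii)$ are immediate: if $\om_n \in \gs$ accumulates at $0$, pick normalized eigenfunctions $(e_n,h_n) \in \gk{\scrM-\om_n}$, which by Theorem \ref{thm:thp_fredh-alt}$(i)$ have $\div \eps e_n = 0 = \div \mu h_n$ and trivially satisfy $\scp{e_n}{\scrB_{1,\ell}}_{\ltom}=\scp{h_n}{\scrB_{2,\ell}}_{\ltom}=0$ after orthogonalization modulo $\vHarmtn{\eps}(\Om) \times \vHarmnt{\mu}(\Om)$; the estimate of $(iii)$ applied to the \emph{inhomogeneous} system $(\scrM-\om_n)(e_n,h_n)=0$ with trivial right-hand side gives $(e_n,h_n)=0$, a contradiction, establishing $(i)$. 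Then $(ii)$ follows from Theorem \ref{thm:thp_fredh-alt}$(iv)$ since $\gk{\scrM-\om}=\{0\}$ for $0\neq \om \in \C_{+,\tilde{\om}}$. The hard part, as anticipated, is the simultaneous control of the local $\lt$-remainder and the new orthogonality penalty terms when $\om_n \to 0$, which requires the careful use of Theorem \ref{thm:stp_dir-neu-char} to identify the static limit inside the Dirichlet-Neumann space.
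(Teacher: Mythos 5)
Your central contradiction argument for the regime $\om_n\to 0$ --- uniform control from Corollary \ref{cor:lfa_a-priori-estimate}, compact extraction via Weck's local selection theorem, identification of the limit as a Dirichlet--Neumann field, transfer of the scaled orthogonality conditions onto $\BGatom$ and $\BGanom$ through the differential equation and \eqref{equ:stp_dir-neu-gen_int}, and annihilation of the limit by Theorem \ref{thm:stp_dir-neu-char} --- is exactly the mechanism of the paper's proof of $(iii)$, and the paper proves $(i)$ by the same scheme applied to eigenfunctions. The trouble lies in your proposed ordering. The case $\om_n\to\om_0\neq 0$ cannot be closed as you describe: $\calL_{\La,\om_n}(F_n,G_n)$ is by construction $\perp_{\La}$ to $\gk{\scrM-\om_n}$, but since $\gs$ is discrete these kernels are trivial for $\om_n$ near an eigenvalue $\om_0\in\gs$ while $\gk{\scrM-\om_0}\neq\{0\}$, so the orthogonality does not pass to the limit, there is no ``re-orthogonalization'' available, and the uniform estimate is in fact false on any compact set meeting $\gs$. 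Since Theorem \ref{thm:thp_fredh-alt} only excludes accumulation of $\gs$ in $\reals\sm\{0\}$, you cannot rule out $\om_0\in\gs$ without first having $(i)$. The repair is either to prove $(i)$ first (as the paper does) or to formulate $(iii)$ as the simultaneous existence of $\tilde{\om}$ and $c$, whose negation automatically produces $\om_n\to 0$ and kills the problematic case.

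The second gap is that your derivation of $(i)$ from $(iii)$ is vacuous: $\calL_{\La,\om}(0,0)=0$ by definition (it is the distinguished solution orthogonal to the kernel), so the estimate of $(iii)$ applied to zero data says nothing about a nonzero eigenfunction. To exclude eigenvalues accumulating at $0$ you must run the a-priori machinery of Corollary \ref{cor:lfa_a-priori-estimate} and the compactness argument directly on normalized eigenfunctions $(e_n,h_n)\in\gk{\scrM-\om_n}$, noting that these lie in $\BGatom^{\perp_{\eps}}\times\BGanom^{\perp_{\mu}}$ automatically because $\eps e_n=(i\om_n)^{-1}\rot h_n$ and $\rot\Rznom\perp\BGatom$ by \eqref{equ:stp_dir-neu-gen_int}; no ``orthogonalization modulo the Dirichlet--Neumann fields'' is needed or meaningful there. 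That is precisely the paper's proof of $(i)$, after which $(ii)$ and $(iii)$ follow as you intend. (A cosmetic point: the limit orthogonality you obtain is $\scp{\eps E}{\scrB_{1,\ell}}_{\ltom}=0$, i.e.\ $E\in\BGatom^{\perp_{\eps}}$, not the unweighted pairing.)
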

\begin{proof}
	Assuming that zero is an accumulation point of $\gs$ there exist a 
	sequence $(\om_n)_{n\in\mathbb{N}}\subset\reals\sm\{0\}$ 
	(\,cf. Theorem \ref{thm:thp_fredh-alt} (iii)\,) tending to zero 
	and a sequence $\big((E_{n},H_{n})\big)_{n\in\mathbb{N}}$ with 
	$(E_{n},H_{n})\in\gk{\scrM-\om_{n}}$ and 
	\[\normlttom{(E_{n},H_{n})}=1\quad\;\,\text{for some}\quad-3/2<t<-1/2\,.\]
	Using the differential equation we obtain 
	$(E_{n},H_{n})\in\big(\,\Rttom\cap\eps^{-1}\zdtnom\,\big)
		\times
	  	\big(\,\Rtnom\cap\mu^{-1}\zdttom\,\big)$ with
	\begin{align*}
		\normlttom{(\rot E_{n},\rot H_{n})}
		\leq c\cdot|\om_{n}|\cdot\normlttom{(E_{n},H_{n})}
		\xrightarrow{\;n\rightarrow\infty\;}0\,.
	\end{align*}
	Consequently $\big((E_{n},H_{n})\big)_{n\in\mathbb{N}}$ is bounded in 
	\begin{align*}
		\big(\,\Rttom\cap\eps^{-1}\Dtnom\,\big)
		\times
	  	\big(\,\Rtnom\cap\mu^{-1}\Dttom\,\big)\,.
	\end{align*}
	Thus Weck's local selection theorem yields a subsequence 
	$\big((E_{\pi(n)},H_{\pi(n)})\big)_{n\in\mathbb{N}}$ 
	converging in $\lgen{}{2}{\tilde{t}}(\Om)\times\lgen{}{2}{\tilde{t}}(\Om)$
	for all $\ttil<t$. In particular, as $t>-3/2$ we may assume 
	$t>\tilde{t}\geq -3/2$. Then 	
	$\big((E_{\pi(n)},H_{\pi(n)})\big)_{n\in\mathbb{N}}$ converges in 
	$\big(\,\Rttiltom\cap\eps^{-1}\zdttiln(\Om)\,\big)\times
	\big(\,\Rttilnom\cap\mu^{-1}\zdttilt(\Om)\,\big)$
	to some\\[-10pt]
	\begin{align*}
		(E,H)\in\harmgen{\eps}{\tilde{t}}{\Gat}{\Gan}(\Om)
		     \times\harmgen{\mu}{\tilde{t}}{\Gan}{\Gat}(\Om)
		     \overset{\eqref{equ:stp_dir-neu-fields_equal}}{=}
		     \vHarmtn{\eps}(\Om)\times\vHarmnt{\mu}(\Om)\,.
	\end{align*}
	In addition, the differential equation together with 
	\eqref{equ:stp_dir-neu-gen_int} yields
	\begin{align*}
		(E_{\pi(n)},H_{\pi(n)})
		\in\BGatom^{\perp_{\eps}}\times\BGanom^{\perp_{\mu}}
		\quad\Longrightarrow\quad
		(E,H)\in\BGatom^{\perp_{\eps}}\times\BGanom^{\perp_{\mu}}\,.	
	\end{align*}
	Therefore by Theorem \ref{thm:stp_dir-neu-char}
	\begin{align*}
		(E,H)
		\in\big(\,\vHarmtn{\eps}(\Om)\cap\BGatom^{\perp_{\eps}}\,\big)
		\times\big(\,\vHarmnt{\mu}(\Om)\cap\BGanom^{\perp_{\mu}}\,\big)
		=\{0\}\times\{0\}\,.
	\end{align*}
	Finally Corollary \ref{cor:lfa_a-priori-estimate} 
	yields constants $c,\da>0$ independent of $n$ such that 
	\begin{align*}
		1=\normlttom{(E_{\pi(n)},H_{\pi(n)})}
		 \leq c\cdot\norm{(E_{\pi(n)},H_{\pi(n)})}_{\lt(\Omda)}
		\xrightarrow{n\to\infty}0\,,
	\end{align*}
	a contradiction which proves $(i)$\;resp.\;$(ii)$. In order to prove $(iii)$, 
	we assume that the asserted estimate is wrong. Then we obtain sequences 
	$(\om_{n})_{n\in\mathbb{N}}\subset\C_{+,\tilde{\om}}\sm\{0\}$ tending to zero and 
	\begin{align*}
		\big((F_{n},G_{n})\big)_{n\in\mathbb{N}}\subset\Dsnom\times\Dstom
		\qquad\text{with}\qquad 
		\normlttom{\calL_{\La,\om_{n}}(F_{n},G_{n})}=1
	\end{align*}
	such that 
	\begin{align*}
		\normltsom{(F_{n},G_{n})}
		 \xrightarrow{\;n\To\infty\;}0
		\,,\qquad\qquad
		|\om_{n}|^{-1}\cdot\normltsom{(\div F_{n},\div G_{n})}
		 \xrightarrow{\;n\To\infty\;}0\,,
	\end{align*}
	and 
	\begin{align}
		&|\om_{n}|^{-1}\cdot|\scp{F_{n}}{\scrB_{1,\ell}}_{\ltom}|
		 \xrightarrow{\;n\To\infty\;} 0
		 \,,\qquad\qquad \ell=1,\ldots,d_{1,2}\,,
		 \label{equ:lfa_acc-points_orth-const}\\
		&|\om_{n}|^{-1}\cdot|\scp{G_{n}}{\scrB_{2,\ell}}_{\ltom}|
		 \xrightarrow{\;n\To\infty\;}0
		 \,,\qquad\qquad \ell=1,\ldots,d_{2,1}\,.
		 \label{equ:lfa_acc-points_orth-const-2}
	\end{align}
	As above, the differential equation shows 
	$\big((E_{n},H_{n})\big)_{n\in\mathbb{N}}$ with 
	$(E_{n},H_{n}):=\calL_{\La,\om_{n}}(F_{n},G_{n})$ is bounded in 
	\begin{align*}
		\big(\,\Rttom\cap\eps^{-1}\Dtnom\,\big)
		\times
	  	\big(\,\Rtnom\cap\mu^{-1}\Dttom\,\big)
	\end{align*}
	and again Weck's local selection theorem provides a subsequence
	$\big((E_{\pi(n)},H_{\pi(n)})\big)_{n\in\mathbb{N}}$ converging in 
	\begin{align*}
		\big(\,\Rttiltom\cap\eps^{-1}\Dttilnom\,\big)
		\times
	  	\big(\,\Rttilnom\cap\mu^{-1}\Dttiltom\,\big)
	\end{align*}
	for all $-3/2\leq\tilde{t}<t$. We obtain
	\begin{align*}
		(E,H):=\lim_{n\to\infty}	(E_{\pi(n)},H_{\pi(n)})
		\in\harmgen{\eps}{\tilde{t}}{\Gat}{\Gan}(\Om)
		\times\harmgen{\mu}{\tilde{t}}{\Gan}{\Gat}(\Om)
		\overset{\eqref{equ:stp_dir-neu-fields_equal}}{=}
		\vHarmtn{\eps}(\Om)\times\vHarmnt{\mu}(\Om)\,.
	\end{align*}
	Moreover, by \eqref{equ:lfa_acc-points_orth-const} we compute for 
	$\ell=1,\ldots,d_{1,2}$
	\begin{align*}
		0\xleftarrow{\;n\To\infty\;}\;
	 	&|\om_{n}|^{-1}\cdot|\scp{F_{n}}{\scrB_{1,\ell}}_{\ltom}|\\
	 	&\;=|\om_{n}|^{-1}\cdot
	 	  |\underbrace{\scp{\rot H_{n}}{\scrB_{1,\ell}}_{\ltom}}_{0}
	 	  +i\hspace*{0.02cm}\om_{n}\scp{\eps E_{n}}{\scrB_{1,\ell}}_{\ltom}|
	      \xrightarrow{\;n\To\infty\;}
	      |\scp{\eps E}{\scrB_{1,\ell}}_{\ltom}|\,,
	\end{align*}
	hence $E\in\BGatom^{\perp_{\eps}}$ and with 
	\eqref{equ:lfa_acc-points_orth-const-2} analogously 
	$H\in\BGanom^{\perp_{\mu}}$. Thus $(E,H)$ must vanish and again Corollary 
	\ref{cor:lfa_a-priori-estimate} yields constants $c,\da>0$ independent of 
	$n$ such that  
	\begin{align*}
		1&=\normlttom{(E_{n},H_{n})}\\
			&\qquad\leq c\,\Big(\,\normltsom{(F_{n},G_{n})}
			 +|\om_{n}|^{-1}\normltsom{(\div F_{n},\div G_{n})}
		     +\norm{(E_{n},H_{n})}_{\lt(\Om_{\da})}\Big)
		      \xrightarrow{\;n\To\infty\;}0\,,
	\end{align*}
	a contradiction.
\end{proof}
\noindent
We are ready to prove our main result:
\begin{theo}
	Let $\eps,\mu$ be 
	$\ka-\co-$decaying with order $\ka>2$, $1/2<s<3/2$, $t:=s-2$, and 
	let $\tilde{\om}$ be the radius from Theorem \ref{thm:lfa_spectrum_central-est}. 
	Then for $(\om_{n})_{n\in\mathbb{N}}\subset\C_{+,\tilde{\om}}\sm\{0\}$ tending 
	to zero and 
	\begin{align*}
		\big((F_{n},G_{n})\big)_{n\in\N}\subset\Dsnom\times\Dstom
	\end{align*}
	such that
	\begin{alignat*}{2}
		(F_{n},G_{n})
		  &\;\xrightarrow{\;n\To\infty\;}\;(F,G)\qquad & 
		&\text{in}\qquad\ltsom\times\ltsom\,,\\
		 -i\om_{n}^{-1}(\div F_{n},\div G_{n})
		  &\;\xrightarrow{\;n\To\infty\;}\;(f,g)\qquad & 
		&\text{in}\qquad\ltsom\times\ltsom\,,\\
		 -i\om_{n}^{-1}\scpltom{F_{n}}{\scrB_{1,\ell}}
		  &\;\xrightarrow{\;n\To\infty\;}\;\zeta_{\ell}\qquad & 
		&\text{in}\qquad\C\,,\quad\quad \ell=1,\ldots,d_{1,2}\,,\\
		 -i\om_{n}^{-1}\scpltom{G_{n}}{\scrB_{2,\ell}}
		  &\;\xrightarrow{\;n\To\infty\;}\;\theta_{\ell}\qquad & 
		&\text{in}\qquad\C\,,\quad\quad \ell=1,\ldots,d_{2,1}\,,
	\end{alignat*}
	the sequence $\big((E_{n},H_{n})\big)_{n\in\N}
	:=\big(\calL_{\La,\om_{n}}(F_{n},G_{n})\big)_{n\in\N}$ of radiating solutions
	converges for all $\ttil<t$ in 
	\begin{align*}
		\big(\,\Rttiltom\cap\eps^{-1}\Dttilnom\,\big)
		\times
		\big(\,\Rttilnom\cap\mu^{-1}\Dttiltom\,\big)
	\end{align*}
	to the static solutions $(E,H)\in\big(\,\Rmotom\cap\eps^{-1}\Dmonom\,\big)
	\times\big(\,\Rmonom\cap\mu^{-1}\Dmotom\,\big)$ of 
	\begin{alignat*}{3}
		\rot E&=G
		\,,\qquad\qquad &
		\div\eps E&=f
		\,,\qquad\qquad &
		\scp{E}{\scrB_{1,\ell}}_{\lteps(\Om)}&=\zeta_{\ell}
		\quad\quad(\,
		\ell=1,\ldots,d_{1,2}\,)\,,\\
		\rot H&=F
		\,,\qquad\qquad &
		\div\mu H&=g
		\,,\qquad\qquad &
		\scp{H}{\scrB_{2,\ell}}_{\ltmu(\Om)}&=\theta_{\ell}
		\quad\quad(\,
		\ell=1,\ldots,d_{2,1}\,)\,.
	\end{alignat*}
\end{theo}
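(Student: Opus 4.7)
The plan is to combine the uniform a-priori estimate of Theorem \ref{thm:lfa_spectrum_central-est}$(iii)$ with Weck's local selection theorem (Theorem \ref{thm:stp_WLST}) and then to identify the limit by passing to the limit in the differential equation and in the functional constraints associated to $\BGatom$, $\BGanom$. First, I would apply the estimate from Theorem \ref{thm:lfa_spectrum_central-est}$(iii)$ to the sequence $(E_n,H_n):=\calL_{\La,\om_n}(F_n,G_n)$. By hypothesis, each of the four groups of terms on its right-hand side is uniformly bounded: $(F_n,G_n)$ is Cauchy in $\ltsom\times\ltsom$, and the quotients $\om_n^{-1}(\div F_n,\div G_n)$, $\om_n^{-1}\scpltom{F_n}{\scrB_{1,\ell}}$, $\om_n^{-1}\scpltom{G_n}{\scrB_{2,\ell}}$ are convergent hence bounded. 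Via the differential equation this upgrades to a uniform bound of $(E_n,H_n)$ in the Hilbert space $(\,\Rttom\cap\eps^{-1}\Dtnom\,)\times(\,\Rtnom\cap\mu^{-1}\Dttom\,)$.

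Second, given any $\ttil<t$, Weck's local selection theorem yields compactness of the embedding of this space into $\lttilom\times\lttilom$, so a subsequence $(E_{\pi(n)},H_{\pi(n)})$ converges in $\lttilom\times\lttilom$ to some $(E,H)$. To improve this to convergence in $(\,\Rttiltom\cap\eps^{-1}\Dttilnom\,)\times(\,\Rttilnom\cap\mu^{-1}\Dttiltom\,)$, I would exploit the explicit identities provided by the differential equation: using $\ltsom\subset\lttom$ together with $\om_n\to 0$ and the uniform $\lttom$-bound on $(E_n,H_n)$, the terms $i\om_n\La(E_n,H_n)$ tend to zero in $\lttom$, which forces $(\rot E_n,\rot H_n)$ to converge to $(\rot E,\rot H)$ in $\lttom$, together identifying $\rot E=G$ and $\rot H=F$ (up to the paper's sign convention for the operator formulation). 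Dividing the divergence of the time-harmonic system by $i\om_n$ gives $\div\eps E_n=-i\om_n^{-1}\div F_n\to f$ and $\div\mu H_n=-i\om_n^{-1}\div G_n\to g$ in $\ltsom$, providing the divergence convergence in $\D_{\ttil}$-norm.

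Third, to recover the constants $\zeta_\ell,\theta_\ell$ I would test the time-harmonic equation against the compactly supported fields $\scrB_{1,\ell}\in\zrztom$. Since $\rot\scrB_{1,\ell}=0$ and $H_n$ satisfies the tangential boundary condition on $\Gan$ (in the weak sense \eqref{equ:prel_weak=strong_1}), a standard approximation by $\cicnom$-fields yields $\scpltom{\rot H_n}{\scrB_{1,\ell}}=0$. Pairing the equation with $\scrB_{1,\ell}$ and dividing by $i\om_n$ then gives $\scp{E_n}{\scrB_{1,\ell}}_{\lteps(\Om)}=-i\om_n^{-1}\scpltom{F_n}{\scrB_{1,\ell}}\to\zeta_\ell$, and the same argument with $\scrB_{2,\ell}$ and the other equation yields $\scp{H_n}{\scrB_{2,\ell}}_{\ltmu(\Om)}\to\theta_\ell$. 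Together with the rotation and divergence limits this shows that $(E,H)$ is a static solution in the sense of Theorems \ref{thm:stp_sol-thm-es} and \ref{thm:stp_sol-thm-ms}; by the uniqueness there, the limit is independent of the chosen subsequence, so the whole sequence $(E_n,H_n)$ converges. The main technical obstacle is the careful handling of the $1/|\om_n|$ factors in the a-priori estimate: the proof only works because the hypotheses on $(F_n,G_n)$ were designed precisely so that each singular quantity has a finite limit, and the test of the equation against the compactly supported $\scrB_{i,\ell}$ recovers these limits as the required orthogonality constants for the static problem.
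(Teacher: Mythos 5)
Your proposal is correct and follows essentially the same route as the paper's proof: uniform boundedness via Theorem \ref{thm:lfa_spectrum_central-est}$(iii)$, extraction of a convergent subsequence by Weck's local selection theorem, identification of the limit through the differential equation and the pairing with the compactly supported fields $\scrB_{1,\ell}$, $\scrB_{2,\ell}$ (using \eqref{equ:stp_dir-neu-gen_int} to kill the $\rot$-terms), and finally uniqueness of the static solution to upgrade subsequence convergence to convergence of the whole sequence. The paper phrases the last step as showing that the difference of two candidate limits lies in $\vHarmtn{\eps}(\Om)\cap\BGatom^{\perp_{\eps}}\times\vHarmnt{\mu}(\Om)\cap\BGanom^{\perp_{\mu}}=\{0\}$, which is the same uniqueness argument you invoke via Theorems \ref{thm:stp_sol-thm-es} and \ref{thm:stp_sol-thm-ms}.
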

\begin{proof}
	By Theorem \ref{thm:lfa_spectrum_central-est} $(iii)$ the sequence 
	$\big((E_{n},H_{n})\big)_{n\in\N}$ is bounded in 
	\begin{align*}
		\big(\,\Rttom\cap\eps^{-1}\Dtnom\,\big)
		\times
		\big(\,\Rtnom\cap\mu^{-1}\Dttom\,\big)
	\end{align*}
	and the differential equation yields 
	\begin{align*}
		M(E_{n},H_{n})=(F_{n},G_{n})-i\om_{n}\La(E_{n},H_{n})
		\,,\qquad\qquad
		(\div\eps E_{n},\div\mu H_{n})&=-\frac{i}{\om_{n}}(\div F_{n},\div G_{n})\,,
	\end{align*}
	such that by assumption
	\begin{alignat*}{3}
		(\rot E_{n},\rot H_{n})
		&\xrightarrow{\;n\To\infty\;}(F,G)
		\qquad &&\text{in}\qquad &&\lttom\times\lttom\,,\\
		(\div \eps E_{n},\div \mu H_{n})
		&\xrightarrow{\;n\To\infty\;}(f,g)
		\qquad &&\text{in}\qquad &&\ltsom\times\ltsom\,.
	\end{alignat*}
	Moreover, for $\ell=1,\ldots,d_{1,2}$ we compute by 
	\eqref{equ:stp_dir-neu-gen_int}
	\begin{align*}
		\scp{E_{n}}{\scrB_{1,\ell}}_{\ltepsom}
		 =-\frac{i}{\om_{n}}
		  \underbrace{\scpltom{\rot H_{n}}{\scrB_{1,\ell}}}_{=\,0}
		  -\frac{i}{\om_{n}}\scpltom{F_{n}}{\scrB_{1,\ell}}
		\xrightarrow{\;n\To\infty\;}\zeta_{\ell}
	\end{align*}
	and analogously $\scp{H_{n}}{\scrB_{2,\ell}}_{\ltmuom}
	\xrightarrow{\;n\To\infty\;}\theta_{\ell}$ for 
	$\ell=1,\ldots,d_{2,1}$.
	By Weck's local selection theorem we may extract a subsequence 
	$\big((E_{\pi(n)},H_{\pi(n)})\big)_{n\in\N}$ with
	\begin{align*}
		(E_{\pi(n)},H_{\pi(n)})
		\xrightarrow{\;n\To\infty\;}:(\tilde{E},\tilde{H})
		\qquad\text{in}\qquad
		\ltttilom\times\ltttilom
	\end{align*}
	for all $-3/2<\ttil<t$. Then 
	\begin{align*}
		(\tilde{E},\tilde{H})
		\in\big(\,
		   \rbigt{-\frac{3}{2}}(\Om)\cap\eps^{-1}\dbign{-\frac{3}{2}}(\Om)\,
		   \big)
		\times\big(\,
		   \rbign{-\frac{3}{2}}(\Om)\cap\mu^{-1}\dbigt{-\frac{3}{2}}(\Om)\,
		   \big)
	\end{align*}
	and $(\tilde{E},\tilde{H})$ solves the electro-magneto static system 
	\begin{alignat*}{3}
		\rot \tilde{E}&=G
		\,,\qquad\qquad &
		\div\eps \tilde{E}&=f
		\,,\qquad\qquad &
		\scp{\tilde{E}}{\scrB_{1,\ell}}_{\lteps(\Om)}&=\zeta_{\ell}
		\quad\quad(\,
		\ell=1,\ldots,d_{1,2}\,)\,,\\
		\rot \tilde{H}&=F
		\,,\qquad\qquad &
		\div\mu \tilde{H}&=g
		\,,\qquad\qquad &
		\scp{\tilde{H}}{\scrB_{2,\ell}}_{\ltmu(\Om)}&=\theta_{\ell}
		\quad\quad(\,
		\ell=1,\ldots,d_{2,1}\,)\,.
	\end{alignat*}
	Finally, the difference $(e,h):=(E,H)-(\tilde{E},\tilde{H})$ satisfies
	\begin{align*}
		(e,h)\in\big(\,
		\harmgen{\eps}{>-\frac{3}{2}}{\Gat}{\Gan}(\Om)\cap\BGatom^{\perp_{\eps}}
		\,\big)
		\times\big(\,
		\harmgen{\mu}{>-\frac{3}{2}}{\Gan}{\Gat}(\Om)\cap\BGanom^{\perp_{\mu}}
		\,\big)\,.
	\end{align*}
	Hence, by \eqref{equ:stp_dir-neu-fields_equal} and Theorem 
	\ref{thm:stp_dir-neu-char} we have $(E,H)=(\tilde{E},\tilde{H})$ and due 
	to the uniqueness of the limit $(E,H)$ even the whole sequence 
	$\big((E_{n},H_{n})\big)_{n\in\N}$ must converge to $(E,H)$ in 
	$\ltsm{t}(\Om)\times\ltsm{t}(\Om)$.
\end{proof}
%
%
%
\bibliographystyle{plain} 
\bibliography{static-solution_low-frequency-biblio-v04-siam.bib}

\begin{thebibliography}{10}

\bibitem{bauer_maxwell_2016}
S.~Bauer, D.~Pauly, and M.~Schomburg.
\newblock The {Maxwell} {Compactness} {Property} in {Bounded} {Weak}
  {Lipschitz} {Domains} with {Mixed} {Boundary} {Conditions}.
\newblock {\em SIAM J. Math. Anal.}, 48(4):2912--2943, 2016.

\bibitem{costabel_remark_1990}
M.~Costabel.
\newblock A remark on the regularity of solutions of maxwell's equations on
  lipschitz domains.
\newblock {\em Math. Methods Appl. Sci.}, 12(4):365--368, 1990.

\bibitem{eidus_principle_1965}
D.{\,}M. Eidus.
\newblock The principle of limiting absorption.
\newblock {\em Amer. Math. Soc. Transl. Ser. 2}, 47:157--191, 1965.

\bibitem{eidus_spectra_1985}
D.{\,}M. Eidus.
\newblock On the spectra and eigenfunctions of the {Schr\"odinger} and
  {Maxwell} operators.
\newblock {\em Journal of Mathematical Analysis and Applications},
  106(2):540--568, 1985.

\bibitem{eidus_limiting_1986}
D.{\,}M. Eidus.
\newblock The limiting absorption and amplitude principles for the diffraction
  problem with two unbounded media.
\newblock {\em Comm. Math. Phys.}, 107(1):29--38, 1986.

\bibitem{fernandes_magnetostatic_1997}
P.~Fernandes and G.~Gilardi.
\newblock {M}agnetostatic and {E}lectrostatic {P}roblems in {I}nhomogeneous
  {A}nisotropic {M}edia with {I}rregular {B}oundary and {M}ixed {B}oundary
  {C}onditions.
\newblock {\em Math. Models Methods Appl. Sci.}, 07(7):957--991, 1997.

\bibitem{jochmann_compactness_1997}
F.~Jochmann.
\newblock A compactness result for vector fields with divergence and curl in
  {$L^2(\Omega)$} involving mixed boundary conditions.
\newblock {\em Appl. Anal.}, 66(1):189--203, 1997.

\bibitem{kress_potentialtheoretische_1972}
R.~Kress.
\newblock Potentialtheoretische {Randwertprobleme} bei {Tensorfeldern}
  beliebiger {Dimension} und beliebigen {Ranges}.
\newblock {\em Arch. Ration. Mech. An.}, 47:59--80, 1972.

\bibitem{kuhn_regularity_2010}
P.~Kuhn and D.~Pauly.
\newblock Regularity results for generalized electro-magnetic problems.
\newblock {\em Analysis}, 30(3):225--252, 2010.

\bibitem{leis_zur_1968}
R.~Leis.
\newblock Zur {Theorie} elektromagnetischer {Schwingungen} in anisotropen
  inhom. {Medien}.
\newblock {\em Math. Z.}, 106:213--224, 1968.

\bibitem{leis_aussenraumaufgaben_1974}
R.~Leis.
\newblock {Aussenraumaufgaben} in der {Theorie} der {Maxwellschen}
  {Gleichungen}.
\newblock In {\em Topics in Analysis}, pages 237--247. Springer, Berlin,
  Heidelberg, 1974.

\bibitem{leis_initial_2013}
R.~Leis.
\newblock {\em Initial {Boundary} {Value} {Problems} in {Mathematical}
  {Physics}}.
\newblock Courier Corporation, 2013.

\bibitem{mcowen_behavior_1979}
R.{\,}C. McOwen.
\newblock The behavior of the laplacian on weighted sobolev spaces.
\newblock {\em Comm. Pure Appl. Math.}, 32(6):783--795, 1979.

\bibitem{milani_decomposition_1988}
A.~Milani and R.~Picard.
\newblock Decomposition theorems and their application to non-linear electro-
  and magneto-static boundary value problems.
\newblock In S.~Hildebrandt and R.~Leis, editors, {\em Partial {Differential}
  {Equations} and {Calculus} of {Variations}}, volume 1357 of {\em Lecture
  {Notes} in {Mathematics}}, pages 317--340. Springer Berlin Heidelberg, 1988.

\bibitem{muller_randwertprobleme_1952}
C.~M\"uller.
\newblock Randwertprobleme der {Theorie} elektromagnetischer {Schwingungen}.
\newblock {\em Math. Z.}, 56(3):261--270, 1952.

\bibitem{muller_behavior_1954}
C.~M\"uller.
\newblock On the behavior of the solutions of the differential equation
  ${\Delta} u=f(x,u)$ in the neighborhood of a point.
\newblock {\em Comm. Pure Appl. Math.}, 7(3):505--515, 1954.

\bibitem{osterbrink_time-harmonic_2019}
F.~Osterbrink and D.~Pauly.
\newblock Time-harmonic electro-magnetic scattering in exterior weak
  {Lipschitz} domains with mixed boundary conditions.
\newblock In U.~Langer, D.~Pauly, and S.~Repin, editors, {\em {Maxwell's}
  {Equations}: {Analysis} and {Numerics}}, volume~24 of {\em Radon Ser. Comput.
  Appl. Math.}, pages 341--382. De Gruyter, 2019.

\bibitem{pauly_niederfrequenzasymptotik_2003}
D.~Pauly.
\newblock {\em Niederfrequenzasymptotik der {Maxwell}-{Gleichung} im
  inhomogenen und anisotropen {Au{\ss}engebiet}}.
\newblock Dissertation, Universit{\"a}t Duisburg-Essen, Fakult{\"a}t f{\"u}r
  Mathematik, 2003.

\bibitem{pauly_low_2006}
D.~Pauly.
\newblock Low frequency asymptotics for time-harmonic generalized {M}axwell
  equations in nonsmooth exterior domains.
\newblock {\em Adv. Math. Sci. Appl}, 16(2):591--622, 2006.

\bibitem{pauly_generalized_2009}
D.~Pauly.
\newblock Generalized electro-magneto statics in nonsmooth exterior domains.
\newblock {\em Analysis (Munich)}, 27(4):425--464, 2007.

\bibitem{pauly_complete_2008}
D.~Pauly.
\newblock Complete low frequency asymptotics for time-harmonic generalized
  {M}axwell equations in nonsmooth exterior domains.
\newblock {\em Asymptotic Analysis}, 60(3):125--184, 2008.

\bibitem{pauly_hodgehelmholtz_2008}
D.~Pauly.
\newblock Hodge-{H}elmholtz decompositions of weighted {S}obolev spaces in
  irregular exterior domains with inhomogeneous and anisotropic media.
\newblock {\em Math. Methods Appl. Sci.}, 31(13):1509--1543, 2008.

\bibitem{pauly_polynomial_2012}
D.~Pauly.
\newblock On polynomial and exponential decay of eigen-solutions to exterior
  boundary value problems for the generalized time-harmonic {Maxwell} system.
\newblock {\em Asymptot. Anal.}, 79(1):133--160, 2012.

\bibitem{pauly_solution_2019}
D.~Pauly.
\newblock Solution {Theory}, {Variational} {Formulations}, and {Functional} a
  {Posteriori} {Error} {Estimates} for {General} {First} {Order} {Systems} with
  {Applications} to {Electro}-{Magneto}-{Statics} and {More}.
\newblock {\em Numer. Funct. Anal. Optim.}, 2020.

\bibitem{pauly_functional_2009}
D.~Pauly and S.~Repin.
\newblock Functional a posteriori error estimates for elliptic problems in
  exterior domains.
\newblock {\em J. Math. Sci.}, 162(3):393, 2009.

\bibitem{picard_randwertaufgaben_1981}
R.~Picard.
\newblock Randwertaufgaben in der verallgemeinerten {Potentialtheorie}.
\newblock {\em Math. Meth. Appl. Sci.}, 3(1):218--228, 1981.

\bibitem{picard_boundary_1982}
R.~Picard.
\newblock On the boundary value problems of electro- and magnetostatics.
\newblock {\em Proc. Roy. Soc. Edinburgh Sect. A}, 92(1-2):165--174, 1982.

\bibitem{picard_elementary_1984}
R.~Picard.
\newblock An elementary proof for a compact imbedding result in generalized
  electromagnetic theory.
\newblock {\em Math. Z.}, 187:151--164, 1984.

\bibitem{picard_decomposition_1990}
R.~Picard.
\newblock Some decomposition theorems and their application to non-linear
  potential theory and {Hodge} theory.
\newblock {\em Math. Meth. Appl. Sci.}, 12(1):35--52, 1990.

\bibitem{picard_time-harmonic_2001}
R.~Picard, N.~Weck, and K.{\,}J. Witsch.
\newblock {Time-Harmonic} {Maxwell} {Equations} in the {Exterior} of
  {Perfectly} {Conducting}, {Irregular} {Obstacles}.
\newblock {\em Analysis (Munich)}, 21(3):231--264, 2001.

\bibitem{weber_local_1980}
C.~Weber.
\newblock A local compactness theorem for {Maxwell's} equations.
\newblock {\em Math. Meth. Appl. Sci.}, 2(1):12--25, 1980.

\bibitem{weck_maxwells_1974}
N.~Weck.
\newblock {Maxwell's} boundary value problem on {Riemannian} manifolds with
  nonsmooth boundaries.
\newblock {\em J. Math. Anal. Appl.}, 46(2):410--437, 1974.

\bibitem{weck_complete_1992}
N.~Weck and K.{\,}J. Witsch.
\newblock Complete low frequency analysis for the reduced wave equation with
  variable coefficients in three dimensions.
\newblock {\em Comm. Partial Differential Equations}, 17(9):1619--1663, 1992.

\bibitem{weck_generalized1_1997}
N.~Weck and K.{\,}J. Witsch.
\newblock Generalized linear elasticity in exterior domains. {I}: {Radiation}
  problems.
\newblock {\em Math. Methods Appl. Sci.}, 20(17):1469--1500, 1997.

\bibitem{weck_generalized2_1997}
N.~Weck and K.{\,}J. Witsch.
\newblock Generalized linear elasticity in exterior domains. {II}:
  low-frequency asymptotics.
\newblock {\em Math. Methods Appl. Sci.}, 20(17):1501--1530, 1997.

\bibitem{witsch_remark_1993}
K.{\,}J. Witsch.
\newblock A remark on a compactness result in electromagnetic theory.
\newblock {\em Math. Meth. Appl. Sci.}, 16(2):123--129, 1993.

\end{thebibliography}
%
%
%
%
\appendix 
%
%
\section{Proof of Theorem \ref{thm:stp_dir-neu-char}}
\label{sec:app_proof-dir-neu-replace-rot}
%
%
\noindent
Without loss of generality we concentrate on the construction of $\BGatom$ for 
$\ga=\mathbbm{1}$. As mentioned, the idea is to construct $\BGatom$ using a basis 
$\frakB(\Omrhat)$ of $\vHarmv{}{\Ga_{1,\rhat}}{\Gan}(\Omrhat)$, 
$\Ga_{1,\rhat}:=\Gat\cup\Sp_{\rhat}$. More precisely, we define\\

\begin{minipage}{0.4\textwidth}
\centering
\begin{tikzpicture}[scale=0.8]
	\tikzset{
  		ring shading/.code args={from #1 at #2 to #3 at #4}{
    	\def\colin{#1}
    	\def\radin{#2}
    	\def\colout{#3}
    	\def\radout{#4}
    	\pgfmathsetmacro{\proportion}{\radin/\radout}
    	\pgfmathsetmacro{\outer}{0.8818cm}
    	\pgfmathsetmacro{\inner}{0.8818cm*\proportion}
    	\pgfmathsetmacro{\innerlow}{\inner-0.01pt}
    	\pgfdeclareradialshading{ring}{\pgfpoint{0cm}{0cm}}%
    	{
    	  color(0pt)=(white);
    	  color(\innerlow)=(white);
    	  color(\inner)=(#1);
    	  color(\outer)=(#3)
    	}
    	\pgfkeysalso{/tikz/shading=ring}
  		},
	}
	\begin{scope} 
	    \shade[even odd rule, 
	    ring shading={from lightgray at 3.3 to white at 4.1}]
  		(0,0) circle (4cm) circle (3.3cm);
		\fill[lightgray,opacity=0.9] (0,0) circle (3.3cm);
		\draw[dashed,line width=1pt] (0,0) circle (2.9cm);
		\filldraw[white] (0,0)--(1,0)--(1.5,0.5)
		                     --(1.5,0.5) arc(5:29:2)--(1.24,1.32)
		                     --(0.8,1)--(0.5,1.8)
		                     --(0.505,1.8) arc(270:249.76:2)--(-0.19,1.92)
		                     --(-0.65,1.405)--(-0.65,1.405) arc (40:10:3);
		\draw[line width=1pt](-0.017,0.01)--(1.01,0.01)-- (1.52,0.52);
		\draw[line width=2pt] (1.5,0.5) arc(5:31:2);
		\draw (1.25,1.34) -- (1.25,1.378);
		\draw[line width=2pt] (1.265,1.34)--(0.8,1.015)--(0.474,1.822);
		\draw[line width=2pt] (0.505,1.8) arc(270:249.7:2);
		\draw[line width=1pt] (-0.668,1.396)--(-0.165,1.945);
		\draw[line width=1pt] (0,0) arc (10:40.2:3);
		\filldraw[lightgray] (0.2,1.2) circle (0.22cm);
		\draw[line width=1pt] (0.2,1.2) circle (0.22cm);
		\filldraw[white] (-1.95,-1.14) arc (-12.8:10:2.75)
							 --(-1.91,0) arc (80:57.2:2.7)
							 --(-0.5,-1)--(-0.85,-0.43)
							 --(-0.5,-1) arc(110:79:1.5)
							 --(0.3,-0.94)--(0.7,-2)--(0,-1.6)
							 --(-0.5,-1.8)--(-1,-1.4)--(-1.5,-1.8)
							 --(-1.96,-1.1);
		\draw[line width=2pt] (-1.95,-0.01) arc (80:57.2:3);
		\draw[line width=2pt] (-1.914,-0.04) arc (10:-13:2.75);
		\draw[line width=2pt] (-0.499,-1.02)--(-0.86,-0.43);
		\draw[line width=1pt] (-0.51,-1) arc(110.5:79:1.5);
		\draw[line width=1pt] (0.288,-0.919)--(0.7,-2)--(0,-1.6)--
							  (-0.5,-1.8)--(-1.02,-1.4)--(-1.48,-1.772);
		\draw[line width=2pt] (-1.5,-1.81)--(-1.95,-1.1);
		\filldraw[lightgray] (-1.2,-0.9) circle (0.22cm);
		\draw[line width=1pt] (-1.2,-0.9) circle (0.22cm);
		\filldraw[lightgray] (-0.2,-1.3) circle (0.12cm);
		\draw[line width=2pt] (-0.2,-1.3) circle (0.12cm);
		\filldraw[white] (1.4,-0.635) arc (110:70:0.9)
							 --(2.06,-0.64) arc (160:200:0.9)
							 --(1.97,-1.25) arc (288:255:0.9);
							 --(1.4,-0.6) arc (20:-20:0.9);	
		\draw[line width=2pt] (1.4,-0.644) arc (110:70:1);
		\draw[line width=2pt] (1.4,-0.598) arc (20.1:-20.1:1);
		\draw[line width=2pt] (1.4,-1.239) arc (250:290:1);
		\draw[line width=2pt] (2.09,-0.6) arc (160:200:1);
		\draw (-1.5,1) node {$\Omrhat$};
		\draw (0.6,0.6) node {$\rthree\sm\Om$};
		\draw (2.75,-1.9) node {$\Sp_{\rhat}$};
		\draw[lightgray] (-3.1,-2.7) node {$\Om$};
		\draw (0.9,1.8) node {$\Gat$};
		\draw (-2.2,-0.6) node {$\Gat$};
		\draw (1.13,-0.9) node {$\Gat$};
		\draw (0.1,-1.95) node {$\Gan$};
		\draw (-0.3,0) node {$\Gan$};
	\end{scope}
\end{tikzpicture}
\end{minipage}
\begin{minipage}{0.6\textwidth}
	\begin{align*}
		\BGatom:=\setb{\scrE_{\Om}(\B)}{\B\in\frakB(\Omrhat)}
				 \subset\zrztom\,,	
	\end{align*}
	where $\map{\scrE_{\Om}}{\lt(\Omrhat)}{\ltom}$ extends functions\;resp.\;fields 
	defined on $\Omrhat$ by zero to $\Om$, and show the following: \\[-8pt]
	\begin{enumerate}[leftmargin=1.75cm, itemsep=4pt, 
	label=\textbf{Step \arabic*:}]
		\item Choosing a basis $\frakB(\Omrhat)$ of 
			  $\vHarmv{}{\Ga_{1,\rhat}}{\Gan}(\Omrhat)$, extending the elements in 
			  $\frakB(\Omrhat)$ by zero to $\Om$ and projecting them onto
			  $\vHarmtn{}(\Om)$, we obtain a linearly independent subset of 
			  $\vHarmtn{}(\Om)$, 
		\item Choosing a basis $\frakB(\Om)$ of $\vHarmtn{}(\Om)$, restricting the 
			  elements in $\frakB(\Om)$ to $\Omrhat$ and projecting them onto 
			  $\vHarmv{}{\Ga_{1,\rhat}}{\Gan}(\Omrhat)$, we obtain a linearly 
			  independent subset of $\vHarmv{}{\Ga_{1,\rhat}}{\Gan}(\Omrhat)$.
			  \\[-8pt]
	\end{enumerate} 
	Then, Step 1 and Step 2 already imply 
	(\,cf. \eqref{equ:dnf_transf-ind}\,)
	\begin{align*}
		|\BGatom|=\dim\vHarmv{}{\Ga_{1,\rhat}}{\Gan}(\Omrhat)
		  		 =\dim\vHarmtn{}(\Om)=d_{1,2}<\infty\,.\\[-10pt]
	\end{align*}
\end{minipage}

Moreover, by Step 1 the projections of the elements in $\BGatom$ along 
$\ovl{\nabla\Hoztom}$ are linearly independent and thus form a basis of 
the Dirichlet-Neumann fields $\vHarmtn{}(\Om)$. Hence, it just remains to show:
\begin{align*}
	\text{\bf Step 3:}\quad\vHarmtn{}(\Om)\cap\BGatom^{\perp}=\{0\}
\end{align*} 	
\begin{lem}[Step 1]\label{lem:app_proof-dir-neu_step-1}
	Let $\map{\pi}{\zrztom}{\vHarmtn{}(\Om)}$ be the orthogonal projection 
	given by 
	\begin{align}\label{equ:app_proof-dir-neu_zrztom-decomp}
		\zrztom=\nabla\homotom\oplus\vHarmtn{}(\Om)
	\end{align}
	from Remark \ref{rem:stp_toolbox-reproduced}. Then the composition
	\begin{align*}
		 \map{\pi\circ\mathscr{E}_{\Om}}
	     {\vHarmv{}{\Ga_{1,\rhat}}{\Gan}(\Omrhat)}
	     {\vHarmtn{}(\Om)}
	\end{align*}
	is injective.
\end{lem}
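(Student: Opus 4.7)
The plan is to exploit the orthogonality structure available on the bounded piece $\Omrhat$, where Weck's selection theorem (and hence Lemma \ref{lem:stp_fa-toolbox}) applies. The key observation driving everything is that the artificial boundary sphere $\Sp_{\rhat}$ has been added to the \emph{Dirichlet} part $\Ga_{1,\rhat}=\Gat\cup\Sp_{\rhat}$, so the tangential trace of any $\B\in\vHarmv{}{\Ga_{1,\rhat}}{\Gan}(\Omrhat)$ vanishes on $\Sp_{\rhat}$, which is exactly what is needed for extension by zero to preserve the rotational structure globally on $\Om$.

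First, I would verify that $\scrE_{\Om}$ really maps $\vHarmv{}{\Ga_{1,\rhat}}{\Gan}(\Omrhat)$ into $\zrztom$. Using the weak definition \eqref{equ:prel_weak=strong_1} of $\Rztom$ and testing against $\Phi\in\cicnom$, the vanishing of the tangential trace of $\B$ on $\Ga_{1,\rhat}$ together with $\rot\B=0$ in $\Omrhat$ yields $\scp{\scrE_{\Om}\B}{\rot\Phi}_{\ltom}=0$, so $\scrE_{\Om}\B\in\Rztom$ with $\rot\scrE_{\Om}\B=0$. (In fact $\scrE_{\Om}\B\in\Rztom\cap\zdznom$, but only the curl part matters here.)

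Next, suppose $\B\in\vHarmv{}{\Ga_{1,\rhat}}{\Gan}(\Omrhat)$ satisfies $\pi(\scrE_{\Om}\B)=0$. By the decomposition \eqref{equ:app_proof-dir-neu_zrztom-decomp}, this means $\scrE_{\Om}\B\in\ovl{\nabla\homotom}=\nabla\homotom$ (using Lemma \ref{lem:stp_toolbox-reproduced}$(iii)(a)$), so $\scrE_{\Om}\B=\nabla u$ for some $u\in\homotom$. Since $\scrE_{\Om}\B$ vanishes identically on the connected exterior region $\cU_{\rhat}\subset\Om$, we get $\nabla u=0$ there, hence $u$ is constant on $\cU_{\rhat}$. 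The weight then forces this constant to be zero: constants fail to lie in $\ltmo(\cU_{\rhat})$ because $\int_{\cU_{\rhat}}\rho^{-2}\,dx=\infty$, whereas $u\in\homotom\subset\ltmoom$.

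The final step is to conclude that $u\lvert_{\Omrhat}$ belongs to $\hgen{}{1}{\Ga_{1,\rhat}}(\Omrhat)$, so that $\B=\nabla(u\lvert_{\Omrhat})$ lies in both $\nabla\hgen{}{1}{\Ga_{1,\rhat}}(\Omrhat)$ and $\vHarmv{}{\Ga_{1,\rhat}}{\Gan}(\Omrhat)$; since these subspaces are $\lt(\Omrhat)$-orthogonal by the bounded-domain decomposition in Lemma \ref{lem:stp_fa-toolbox}$(iii)$, we obtain $\B=0$. The main technical obstacle I anticipate is this last trace step: we need to upgrade the pointwise vanishing of $u$ on $\cU_{\rhat}$ to the assertion that $u\lvert_{\Omrhat}$ has vanishing trace on $\Sp_{\rhat}$ in the weak-Lipschitz sense, so that it can be approximated by test functions vanishing near $\Ga_{1,\rhat}=\Gat\cup\Sp_{\rhat}$. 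This is carried out by a standard cut-off and mollification argument, using that $u\in\homotom$ and $u=0$ on a one-sided neighborhood $\cU_{\rhat}$ of $\Sp_{\rhat}$, together with the identification \eqref{equ:prel_weak=strong_2} of weak and strong boundary conditions for weak Lipschitz domains.
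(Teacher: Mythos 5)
Your proof is correct and follows essentially the same route as the paper: decompose $\scrE_{\Om}\B=\nabla u+\theta$, use the weight to force the constant on $\cU_{\rhat}$ to vanish, conclude that $u\lvert_{\Omrhat}$ has vanishing trace on $\Ga_{1,\rhat}$, and finish by orthogonality on the bounded piece. The paper writes that last step as an explicit integration by parts, $\norm{\B}_{\lt(\Omrhat)}^2=\scp{\B}{\nabla u}_{\lt(\Omrhat)}=-\scp{\div \B}{u}_{\lt(\Omrhat)}=0$, which is exactly the orthogonality of $\nabla\hgen{}{1}{\Ga_{1,\rhat}}(\Omrhat)$ and $\vHarmv{}{\Ga_{1,\rhat}}{\Gan}(\Omrhat)$ that you invoke from Lemma \ref{lem:stp_fa-toolbox}$(iii)$.
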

\begin{proof}
	Let $H\in\vHarmv{}{\Ga_{1,\rhat}}{\Gan}(\Omrhat)$. Then 
	$\mathscr{E}_{\Om}(H)\in\zrztom$ and  
	with \eqref{equ:app_proof-dir-neu_zrztom-decomp} we can decompose 
	\begin{align*}
		\mathscr{E}_{\Om}(H)
			=\nabla w+\theta\in\nabla\homotom\oplus\vHarmtn{}(\Om)\,.	
	\end{align*}
 	To show injectivity we assume $\theta=0$. Then $\nabla w=\mathscr{E}_{\Om}(H)=0$ 
 	in $\cU_{\rhat}$. Thus $w$ is constant in $\cU_{\rhat}$ and as $w\in\homotom$ it 
 	has to vanish in $\cU_{\rhat}$, hence 
 	$w\in\Hgen{}{1}{\Ga_{1,\text{$\rhat$}}}(\Omrhat)$. By partial integration we 
 	conclude
	\begin{align*}
		\norm{H}_{\lt(\Omrhat)}^2
			=\scp{H}{\nabla w}_{\lt(\Omrhat)}
			=-\scp{\div H}{w}_{\lt(\Omrhat)}=0\,.
	\end{align*}
\end{proof}
\begin{lem}[Step 2]\label{lem:app_proof-dir-neu_step-2}
	Let $\map{\pi}{\zdzn(\Omrhat)}{\vHarmv{}{\Ga_{1,\rhat}}{\Gan}(\Omrhat)}$ 
	be the orthogonal projection given by 
	\begin{align}\label{equ:app_proof-dir-neu_zdznom-decomp}
		\zdzn(\Omrhat)=\rot\Rzn(\Omrhat)
		\oplus\vHarmv{}{\Ga_{1,\rhat}}{\Gan}(\Omrhat)
	\end{align}
	from Lemma \ref{lem:stp_fa-toolbox} (iii). Moreover, let 
	$\map{\scrR_{\Omrhat}}{\ltom}{\lt(\Omrhat)}$ be the operator restricting 
	functions\;resp.\;fields on $\Om$ to $\Omrhat$. Then
	\begin{align*}
		\map{\pi\circ\scrR_{\Omrhat}}
		 	{\vHarmtn{}(\Om)}
		 	{\vHarmv{}{\Ga_{1,\rhat}}{\Gan}(\Omrhat)}
	\end{align*}
	is injective.
\end{lem}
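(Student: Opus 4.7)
The strategy is to assume the hypothesis and prove $\|\theta\|^2_{\lt(\Om)}=0$ directly by integration-by-parts on $\Omrhat$ and on the exterior ball $\cU_{\rhat}$, engineered so that the boundary contributions on the artificial sphere $\Sp_{\rhat}$ cancel exactly.

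First I would invoke the hypothesis together with the decomposition \eqref{equ:app_proof-dir-neu_zdznom-decomp} to obtain some $E\in\Rzn(\Omrhat)$ with $\rot E=\scrR_{\Omrhat}(\theta)$ in $\Omrhat$. In parallel, since $\cU_{\rhat}$ is simply connected (homotopy equivalent to $\Sp^{2}$) and $\theta$ is smooth, curl-free and divergence-free there, a harmonic potential $u$ on $\cU_{\rhat}$ with $\nabla u=\theta$ exists. Because $\theta\in\vHarmtn{}(\Om)=\harmgen{}{<1/2}{\Gat}{\Gan}(\Om)$ by \eqref{equ:stp_dir-neu-fields_equal}, the multipole expansion for harmonic functions on an exterior ball normalizes $u$ so that $u(x)\to 0$ as $|x|\to\infty$, with the sharp asymptotics $u=\calO(r^{-1})$ and $\theta=\calO(r^{-2})$.

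Next I would split $\|\theta\|^2_{\lt(\Om)}=\|\theta\|^2_{\lt(\Omrhat)}+\|\theta\|^2_{\lt(\cU_{\rhat})}$ and evaluate each summand. For the exterior piece, integration by parts using $\theta=\nabla u$ and $\Delta u=0$, combined with the fact that the outward unit normal to $\cU_{\rhat}$ on $\Sp_{\rhat}$ points inward from the origin, yields
\[
\|\theta\|^2_{\lt(\cU_{\rhat})}=-\int_{\Sp_{\rhat}}u\,(\theta\cdot\xi)\,dS\,,
\]
with $\xi(x):=x/|x|$. For the interior piece I apply Green's identity
\[
\int_{\Omrhat}\theta\cdot\rot E\,dV=\int_{\Omrhat}E\cdot\rot\theta\,dV+\int_{\partial\Omrhat}(n\times E)\cdot\theta\,dS\,.
\]
The bulk term drops out since $\rot\theta=0$; on $\Gat$ the condition $n\times\theta=0$ forces $\theta\parallel n$, killing $(n\times E)\cdot\theta$, and on $\Gan$ the condition $n\times E=0$ does so directly. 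Only the $\Sp_{\rhat}$ term survives, and after decomposing $E,\theta$ into tangential and normal parts and using continuity of $\theta$ across $\Sp_{\rhat}$ with $\theta_T=\nabla_T u$, it equals $\int_{\Sp_{\rhat}}\xi\cdot(E_T\times\nabla_T u)\,dS$. The surface product rule $\operatorname{rot}_T(uE_T)=u\operatorname{rot}_T E_T+\nabla_T u\cdot(E_T\times\xi)$ integrated over the closed surface $\Sp_{\rhat}$ gives zero, which together with $\operatorname{rot}_T E_T=\xi\cdot\rot E|_{\Sp_{\rhat}}=\theta\cdot\xi$ converts this surface integral into $+\int_{\Sp_{\rhat}}u\,(\theta\cdot\xi)\,dS$. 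Thus the two $\Sp_{\rhat}$ contributions cancel and $\|\theta\|^2_{\lt(\Om)}=0$.

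The main technical obstacle is justifying the surface Stokes manipulation on $\Sp_{\rhat}$: the tangential trace of $E\in\Rzn(\Omrhat)$ is only an element of an $\Hgen{}{-1/2}{}$-type dual space, while the product rule above is classical. I would overcome this by approximating $E$ by test fields in $\cicn(\Omrhat)$, which is dense in $\Rzn(\Omrhat)$ by definition; since $u$ and $\theta$ are smooth across $\Sp_{\rhat}$ from the outside, the necessary duality pairings and surface integrations are well defined and survive the limit. The vanishing of the boundary integral on $\Sp_R$ as $R\to\infty$ in the $\cU_{\rhat}$ calculation rests on the multipole asymptotics stated above, which give an $\calO(R^{-1})$ bound on that integral.
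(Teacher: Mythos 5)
Your proposal is built on the same core idea as the paper's proof --- represent $H$ as $\rot E$ on $\Omrhat$ (from the assumed vanishing of the projection) and as a gradient $\nabla u$ of a harmonic potential on $\cU_{\rhat}$ (possible since $\cU_{\rhat}$ carries no Neumann fields), and then show that the two representations force $\norm{H}_{\ltom}^2=0$ by a cancellation across the artificial sphere $\Sp_{\rhat}$. The computation you carry out is correct for smooth boundaries and smooth fields, and your decay normalization $u=\calO(r^{-1})$, $H=\calO(r^{-2})$ is the right way to kill the boundary term at infinity.

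The gap is in the execution of the boundary bookkeeping. Your argument splits $\int_{\p\Omrhat}(n\times E)\cdot\theta\,dS$ into pieces over $\Gat$, $\Gan$ and $\Sp_{\rhat}$ and reasons pointwise (``$n\times\theta=0$ forces $\theta\parallel n$'', etc.). In this paper $\Ga$ is only weak Lipschitz and the boundary conditions are \emph{defined} as closures of test fields (equivalently by the global integration-by-parts identities \eqref{equ:prel_weak=strong_1}); there is no surface measure, no pointwise normal, and $n\times E$, $n\times\theta$ are not functions on $\Ga$, so neither the localization of the duality pairing to the individual boundary pieces nor the pointwise cancellation on $\Gat$, $\Gan$ is available as stated. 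Your proposed density fix only smooths the $E$-side of the pairing on $\Sp_{\rhat}$ (and even there the convergence of $\int_{\Sp_{\rhat}}u\,\xi\cdot\rot E_k$ needs the extra observation that $\div\rot E_k=0$ controls the normal trace of $\rot E_k$ in $\Hgen{}{-\nicefrac{1}{2}}{}(\Sp_{\rhat})$); it does nothing for the traces on the nonsmooth $\Ga$, which is where the real difficulty sits. The paper avoids every surface integral by one extra move: extend the exterior potential $u\in\homo(\cU_{\rhat})$ to some $\widehat{w}\in\hgen{}{1}{-1,\Ga}(\Om)$, so that $H-\nabla\widehat{w}$ is a curl-free field in $\zrztom$ vanishing on $\cU_{\rhat}$, hence an element of ${}_{0}\rgen{}{}{\Ga_{1,\rhat}}(\Omrhat)$; then $\scp{\rot E}{H-\nabla\widehat{w}}_{\lt(\Omrhat)}=\scp{E}{\rot(H-\nabla\widehat{w})}_{\lt(\Omrhat)}=0$ and $\scp{H}{\nabla\widehat{w}}_{\ltom}=-\scp{\div H}{\widehat{w}}_{\ltom}=0$ follow directly from the weak definitions, with all boundary contributions (including the $\Sp_{\rhat}$ cancellation you compute by hand) absorbed into the complementarity of the boundary-condition spaces. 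If you insert that extension step, your argument collapses onto the paper's proof; without it, the surface-integral manipulations need a separate (and nontrivial) justification in the weak Lipschitz setting.
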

\begin{proof}
	Let $H\in\vHarmtn{}(\Om)$. By 
	\eqref{equ:app_proof-dir-neu_zdznom-decomp}, the restriction
	$\scrR_{\Omrhat}(H)\in\zdzn(\Omrhat)$ can be decomposed into   
	\begin{align*}
		\mathscr{R}_{\Omrhat}(H)
			=\rot E+\theta
			 \in\rot\Rzn(\Omrhat)
			 \oplus\vHarmv{}{\Ga_{1,\rhat}}{\Gan}(\Omrhat)\,.
	\end{align*}
	To show injectivity we assume $\theta=0$. In $\Omrhat$ we have 
	\begin{align}\label{equ:app_proof-dir-neu_step-1-bd}
		H=\mathscr{R}_{\Omrhat}(H)=\rot E
		\quad\text{with}\quad
		E\in\Rzn(\Omrhat)\,.	
	\end{align}
	Furthermore, $H\in\zr(\cU_{\rhat})$ and as the \emph{Neumann-fields} 
	$\vHarmv{}{\emptyset}{\Sp_{\rhat}}(\cU_{\rhat})
	=\zr(\cU(\rhat))\cap{}_{0}\dgen{}{}{\Sp_{\rhat}}(\cU(\rhat))=\{0\}$ are trivial 
	( the dimension is determined by the number of handles of $\cU_{\rhat}$, cf. 
	\cite{milani_decomposition_1988, picard_boundary_1982}\,) 
	Lemma \ref{lem:stp_toolbox-reproduced} yields
	\begin{align*}
		\zr(\cU_{\rhat})
			=\nabla\homo(\cU_{\rhat})
			 \oplus
			 \vHarmv{}{\emptyset}{\Sp_{\rhat}}(\cU_{\rhat})
			=\nabla\homo(\cU_{\rhat})\,.
	\end{align*}
	Thus, there exists $w\in\homo(\cU_{\rhat})$ such that $H=\nabla w$ in 
	$\cU_{\rhat}$. Using a suitable extension operator (\,e.g., the one of Stein\,), 
	we extend $w$ to $\widehat{w}\in\hgen{}{1}{-1,\Ga}(\Om)$. Then 
	$H-\nabla\widehat{w}\in\zrztom$ with $H-\nabla\widehat{w}=0$ in 
	$\cU_{\rhat}$ and hence
	\begin{align}\label{equ:app_proof-dir-neu_step-1-ext}
		H-\nabla\widehat{w}\in{}_{0}\rgen{}{}{\Ga_{1,\rhat}}(\Omrhat)
		\,,\quad\quad\Ga_{1,\rhat}=\Gat\cup\Sp_{\rhat}\,.
	\end{align}  
	From \eqref{equ:app_proof-dir-neu_step-1-ext} and 
	\eqref{equ:app_proof-dir-neu_step-1-bd} we conclude
	\begin{align*}
		\norm{H}_{\lt(\Om)}^2	
			&=\scp{H}
			      {H-\nabla\widehat{w}}_{\ltom}
			  +\scp{H}
			       {\nabla\widehat{w}}_{\ltom}\\
			&=\scp{\rot E}
				  {H-\nabla\widehat{w}}_{\lt(\Omrhat)}
			  -\underbrace{\scp{\div H}
			       {\widehat{w}}_{\ltom}	}_{=\,0}
			 =\scp{E}
			      {\rot(H-\nabla\widehat{w})}_{\lt(\Omrhat)}=0\,.
	\end{align*}
\end{proof}
\begin{lem}[Step 3]	
	Let $\frakB(\Omrhat)$ be a basis of $\vHarmv{}{\Ga_{1,\rhat}}{\Gan}(\Omrhat)$ 
	and let $\BGatom$ be defined as above. It holds
	\begin{align*}
		\vHarmtn{}(\Om)\cap\BGatom^{\perp}=\{0\}\,.
	\end{align*}
\end{lem}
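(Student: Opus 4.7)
The plan is to combine Steps 1 and 2 with the Helmholtz-type decomposition from Remark \ref{rem:stp_toolbox-reproduced} to reduce the assertion to testing against a basis of $\vHarmtn{}(\Om)$. First I would note that the previous two lemmas yield dimension matches: by Step 1 the map $\pi\circ\scrE_{\Om}$ is an injection from $\vHarmv{}{\Ga_{1,\rhat}}{\Gan}(\Omrhat)$ into the finite-dimensional space $\vHarmtn{}(\Om)$, while Step 2 provides an injection the other way. Hence both spaces have the same (finite) dimension $d_{1,2}$, and for any basis $\frakB(\Omrhat)=\{\B_{1},\ldots,\B_{d_{1\hspace*{-0.025cm},\hspace*{-0.02cm}2}}\}$ of $\vHarmv{}{\Ga_{1,\rhat}}{\Gan}(\Omrhat)$ the images $\theta_{\ell}:=\pi(\scrE_{\Om}(\B_{\ell}))$ form a basis of $\vHarmtn{}(\Om)$.

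Next I would decompose each generator of $\BGatom$ via the splitting $\zrztom=\nabla\homotom\oplus\vHarmtn{}(\Om)$ from Remark \ref{rem:stp_toolbox-reproduced} (applied with $\eps=\mathbbm{1}$), writing
\begin{align*}
    \scrE_{\Om}(\B_{\ell})=\nabla w_{\ell}+\theta_{\ell}\,,
    \qquad
    w_{\ell}\in\homotom\,,\quad\theta_{\ell}\in\vHarmtn{}(\Om)\,.
\end{align*}
Now let $H\in\vHarmtn{}(\Om)\cap\BGatom^{\perp}$. Then $H\in\zdznom$, and by the Helmholtz decomposition $\ltom=\nabla\homotom\oplus\zdznom$ (again from Remark \ref{rem:stp_toolbox-reproduced}) the summand $\nabla w_{\ell}$ is $\ltom$-orthogonal to $H$. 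Therefore
\begin{align*}
    0\,=\,\scp{H}{\scrE_{\Om}(\B_{\ell})}_{\ltom}
    \,=\,\underbrace{\scp{H}{\nabla w_{\ell}}_{\ltom}}_{=\,0}
       \,+\,\scp{H}{\theta_{\ell}}_{\ltom}
    \,=\,\scp{H}{\theta_{\ell}}_{\ltom}
\end{align*}
for every $\ell=1,\ldots,d_{1,2}$. Since $\{\theta_{\ell}\}$ is a basis of $\vHarmtn{}(\Om)$ and $H\in\vHarmtn{}(\Om)$, this forces $H=0$.

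There is essentially no technical obstacle left: Steps 1 and 2 did the topological work (matching $\Om$ and $\Omrhat$ through the cohomological dimension), while the present step is a clean finite-dimensional orthogonality argument. The only point to verify carefully is that the orthogonality $\scp{H}{\nabla w_{\ell}}_{\ltom}=0$ really follows from the decomposition in Remark \ref{rem:stp_toolbox-reproduced}, which is immediate because $H\in\zdznom$ and $\nabla w_{\ell}\in\nabla\homotom$ lie in orthogonal summands of $\ltom$; equivalently, it is the weak Green's identity $\scp{\nabla w_{\ell}}{H}_{\ltom}=-\scp{w_{\ell}}{\div H}_{\ltom}=0$ via the characterization \eqref{equ:prel_weak=strong_2} of $\homotom$ and $\dmonom$.
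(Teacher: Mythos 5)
Your proof is correct, but it concludes Step 3 by a different route than the paper. The paper's proof stays on the bounded domain: from $H\in\BGatom^{\perp}$ and the fact that $\frakB(\Omrhat)$ is a basis, it deduces $\scrR_{\Omrhat}(H)\in\zdzn(\Omrhat)\cap\vHarmv{}{\Ga_{1,\rhat}}{\Gan}(\Omrhat)^{\perp}=\rot\Rzn(\Omrhat)$ via the bounded-domain Helmholtz decomposition \eqref{equ:app_proof-dir-neu_zdznom-decomp}, and then reruns the analytic tail of the Step 2 argument (triviality of the Neumann fields of $\cU_{\rhat}$, extension of the potential $w$ to $\widehat w$, and the pairing of $\rot E$ against $H-\nabla\widehat w$) to obtain $\norm{H}_{\ltom}^{2}=0$. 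You instead work entirely in $\Om$: you use the injections of Steps 1 and 2 between finite-dimensional spaces to conclude that $\pi\circ\scrE_{\Om}$ is bijective, so the projections $\theta_{\ell}$ form a basis of $\vHarmtn{}(\Om)$ (a fact the paper itself records in the preamble before Step 3 but does not exploit in its proof), and then a two-line orthogonality computation using $\ltom=\nabla\homotom\oplus\zdznom$ finishes the job. Your version buys a purely finite-dimensional, essentially algebraic conclusion that avoids repeating the integration-by-parts and extension machinery; the price is that you genuinely need the dimension equality (injectivity of $\pi\circ\scrE_{\Om}$ alone would only give linear independence of the $\theta_{\ell}$, not a spanning set, and the argument would collapse), whereas the paper's proof needs only the conclusion of \eqref{equ:app_proof-dir-neu_zdznom-decomp} and the Step 2 computation. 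Both arguments are valid, and your justification of $\scp{H}{\nabla w_{\ell}}_{\ltom}=0$ via the orthogonal summands of Remark \ref{rem:stp_toolbox-reproduced} is the clean way to see it.
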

\begin{proof}
	Let 
	$H\in\vHarmtn{}(\Om)\cap\BGatom^{\perp}$. 
	Then, for all $B\in\frakB(\Omrhat)$ we have by definition of $\BGatom$
	\begin{align*}
		\scp{\scrR_{\Omrhat}(H)}{B}_{\lt(\Omrhat)}
			=\scp{H}{\scrE_{\Om}(B)}_{\lt(\Om)}=0\,,
	\end{align*}
 	and hence by \eqref{equ:app_proof-dir-neu_zdznom-decomp}
 	\begin{align*}
 		\scrR_{\Omrhat}(H)\in
 		\zdzn(\Omrhat)
		\cap
 		\vHarmv{}{\Ga_{1,\rhat}}{\Gan}(\Omrhat)^{\perp}
	 	=\rot\Rzn(\Omrhat)\,.
	\end{align*}
	The assertion (\,$H=0$\,) now follows by continuing as in the latter proof 
	after \eqref{equ:app_proof-dir-neu_step-1-bd}.
\end{proof}
\end{document}